\def\1{\bm{1}}
\DeclareMathAlphabet{\mathsfit}{\encodingdefault}{\sfdefault}{m}{sl}
\SetMathAlphabet{\mathsfit}{bold}{\encodingdefault}{\sfdefault}{bx}{n}
\newcommand{\R}{\mathbb{R}}
\DeclareMathOperator*{\argmin}{arg\,min}
\algnewcommand{\algorithmicforeach}{\textbf{for each}}
\newcommand{\rda}{\text{RDA}}
\newcommand{\proxsvrg}{\text{Prox-SVRG}}
\newcommand{\proxsg}{\text{Prox-SG}}
\newcommand{\proxspider}{\text{Prox-Spider}}
\newcommand{\saga}{\text{SAGA}}
\newcommand{\algname}{Half-Space Stochastic Projected Gradient}
\newcommand{\algacro}{HSPG{}}
\newcommand{\cifar}{\text{CIFAR10}}
\newcommand{\fashionmnist}{\text{Fashion-MNIST}}
\newcommand{\resnet}{\text{ResNet18}}
\newcommand{\vgg}{\text{VGG16}}
\newcommand{\mobilenet}{\text{MobileNetV1}}
\newcommand{\myeq}{\stackrel{\mathclap{\normalfont\mbox{\small def}}}{=}}
\newcommand{\prox}{\text{Prox}}
\newcommand{\proxsgstep}{\text{Prox-SG Step}}
\newcommand{\halfspacestep}{\text{Half-Space Step}}
\newcommand{\ie}{\textit{i.e.}}
\newcommand{\eg}{\textit{e.g.}}
\newtheorem{theorem}{Theorem}
\newtheorem{lemma}{Lemma}
\newtheorem{corollary}{Corollary}
\newtheorem{assumption}{Assumption}
\newtheorem{proposition}{Proposition}
\def \B {\mathcal{B}}
\def \R {\mathbb{R}}
\def \I {\mathcal{I}}
\def \S {\mathcal{S}}
\def \P {\mathcal{P}}
\def \G {\mathcal{G}}
\def \O {\mathcal{O}}
\title{Half-Space Proximal Stochastic Gradient Method for Group-Sparsity Regularized Problem}
\author{\textbf{Tianyi Chen} \\
Microsoft\\
\texttt{tiachen@microsoft.com} \\
\And
\textbf{Guanyi Wang} \\
Georgia Institute of Technology \\
\texttt{gwang93@gatech.edu} \\
\And
\textbf{Tianyu Ding} \\
Johns Hopkins University \\
\texttt{tding1@jhu.edu}
\AND
\textbf{Bo Ji} \\
Zhejiang University \\
\texttt{jibo27@zju.edu.cn}
\And
\textbf{Sheng Yi} \\
Microsoft\\
\texttt{shengyi@microsoft.com}\\
\And
\textbf{Zhihui Zhu} \\
University of Denver\\
\texttt{zhihui.zhu@du.edu}
}
\begin{document}

\maketitle

\begin{abstract}
Optimizing with group sparsity is significant in enhancing model interpretability in machining learning applications, \eg, feature selection, compressed sensing and model compression. However, for large-scale stochastic training problems, 
effective group sparsity exploration are typically hard to achieve. 
Particularly, the state-of-the-art stochastic optimization algorithms 
usually generate merely dense solutions.
To overcome this shortage, we propose a stochastic method---Half-space Stochastic Projected Gradient (HSPG) method
to 
search solutions of high group sparsity
while maintain the convergence. Initialized by a simple Prox-SG Step, the \algacro{} method 
relies on a novel~\halfspacestep{} to substantially boost the sparsity level.
Numerically, \algacro{} demonstrates its superiority in 
deep neural networks, \eg,~\vgg{},~\resnet{} and~\mobilenet, by computing solutions of  higher group sparsity, competitive objective values and generalization accuracy.
\end{abstract}

\section{Introduction}

In many recent machine learning optimization tasks, researchers not only focus on finding solutions with small prediction/generalization error
but also concentrate on improving the interpretation of model by filtering out redundant parameters and achieving slimmer model architectures.  One technique to achieve the above goal is by augmenting the sparsity-inducing regularization terms to the raw objective functions to generate sparse solutions (including numerous zero elements). The popular $\ell_1$-regularization promotes the sparsity of solutions by element-wise penalizing the optimization variables. However, in many practical applications, there exist additional constraints on variables such that the zero coefficients are often not randomly distributed but tend to be clustered into varying more sophisticated sparsity structures, \eg, disjoint and overlapping groups and hierarchy \citep{yuan2006model,huang2010benefit,huang2009learning}.
As the most important and natural form of structured sparsity, the disjoint group-sparsity regularization, which assumes the pre-specified disjoint blocks of variables 
are selected (non-zero variables) or ignored (zero variables) simultaneously~\citep{bach2012structured}, serves as a momentous role in general structured sparsity learning tasks since other instances such as overlapping group and hierarchical sparsity are typically solved by converting into the equivalent disjoint group versions via introducing latent variables~\citep{bach2012structured}, and has found numerous applications in computer vision \citep{elhamifar2012see}, signal processing \citep{chen2014group}, medical imaging \citep{liu2018mri}, and deep learning~\citep{scardapane2017group}, especially on the model compression of deep neural networks, where the group sparsity\footnote{Group sparsity is defined as \# of zero groups, where a zero group means all its variables are exact zeros.} is leveraged to remove redundant entire hidden structures directly.

\textbf{Problem Setting.} We study the disjoint group sparsity regularization problem which can be typically formulated as the mixed $\ell_1/\ell_p$-regularization problem, and pay special attention to the most popular and widely used instance $p$ as $2$~\citep{bach2012structured,el2018combinatorial},
\begin{equation}\label{prob.x}
\minimize{\bm{x}\in \R^n}\ \Big\{\Psi(\bm{x})\ \myeq\ f(\bm{x})+\lambda \Omega(\bm{x})={\frac{1}{N}\sum_{i=1}^N f_i(\bm{x})}+\lambda{\sum_{g\in\mathcal{G}}\norm{[\bm{x}]_g}}\Big\},
\end{equation}
where $\lambda>0$ is a weighting factor, $\norm{\cdot}$ denotes $\ell_2$-norm, $f(\bm{x})$ is the average of numerous $N$ continuously differentiable instance functions $f_i : \R^n\rightarrow\R$, such as the loss functions measuring the deviation from the observations in various data fitting problems, $\Omega(\bm{x})$ is the so-called mixed $\ell_1/\ell_2$ norm,  $\mathcal{G}$ is a prescribed fixed partition of index set $\mathcal{I}=\{1,2,\cdots, n\}$, wherein each component $g\in\mathcal{G}$ indexes a group of variables upon the perspective of applications. Theoretically, a larger $\lambda$ typically results in a higher group sparsity while sacrifices more on the bias of model estimation, hence $\lambda$ needs to be carefully fine-tuned to achieve both low $f$ and high group-sparse solutions.

\textbf{Literature Review.\;} Problem~(\ref{prob.x}) has been well studied in deterministic optimization with various algorithms that are capable of returning solutions with both low objective value and high group sparsity under proper $\lambda$~\citep{yuan2006model,roth2008group,huang2011learning,ndiaye2017gap}. Proximal methods are classical approaches to solve the structured non-smooth optimization~\eqref{prob.x}, including the popular proximal gradient method (Prox-FG) which only uses the first-order derivative information. When $N$ is huge, stochastic methods become ubiquitous to operate on a small subset to avoid the costly evaluation over all instances in deterministic methods for large-scale problems. Proximal stochastic gradient method~(\proxsg)~\citep{duchi2009efficient} is the natural stochastic extension of Prox-FG. Regularized dual-averaging method (\rda)~\citep{xiao2010dual,yang2010online} is proposed by extending the dual averaging scheme in~\citep{nesterov2009primal}. To improve the convergence rate, 
there exists a set of incremental gradient methods inspired by  SAG~\citep{roux2012stochastic} to utilizes the average of accumulated past gradients. For example, 
proximal stochastic variance-reduced gradient method (\proxsvrg{})~\citep{xiao2014proximal} and proximal spider (\proxspider)~\citep{zhang2019multi} are developed to adopt multi-stage
schemes based on the well-known variance reduction technique SVRG proposed in~\citep{johnson2013accelerating} and Spider developed in~\citep{fang2018spider} respectively.  \saga{}~\citep{defazio2014saga} stands as the midpoint between SAG and Prox-SVRG.


Compared to deterministic methods, the studies of mixed $\ell_1/\ell_2$-regularization~(\ref{prob.x}) in stochastic field become somewhat rare and limited. \proxsg{},~\rda{},~\proxsvrg{}, Prox-Spider and~\saga{} are valuable state-of-the-art stochastic algorithms for solving problem~(\ref{prob.x}) but with apparent weakness. Particularly, these existing stochastic algorithms typically meet difficulties to achieve both decent convergence and effective group sparsity identification simultaneously (e.g., small function values but merely dense solutions), because of the randomness and the limited sparsity-promotion mechanisms. In depth,~\proxsg{},~\rda{},~\proxsvrg{},~\proxspider{} and~\saga{} derive from proximal gradient method to utilize the proximal operator to produce group of zero variables. Such operator is generic to extensive non-smooth problems, consequently perhaps not sufficiently insightful if the target problems possess certain properties,~\eg, the group sparsity structure as problem~(\ref{prob.x}). In fact, in convex setting, the proximal operator suffers from variance of gradient estimate; and in non-convex setting, especially deep learning, the discreet step size (learning rate) further deteriorates its effectiveness on the group sparsity promotion, as will show in Section~\ref{sec.algorithm} that the projection region vanishes rapidly except~\rda{}. \rda{} has superiority on finding manifold structure to others~\citep{lee2012manifold}, but inferiority on the objective convergence. Besides, the variance reduction techniques are typically required to measure over a huge mini-batch data points in both theory and practice which is probably prohibitive for large-scale problems, and have been observed as sometimes noneffective for deep learning applications~\citep{defazio2019ineffectiveness}. On the other hand, to introduce sparsity, there exist heuristic weight pruning methods~\citep{li2016pruning,luo2017thinet}, whereas they commonly do not equip with theoretical guarantee, so that easily diverge and hurt generalization accuracy. 

\textbf{Our Contributions.}  \algname{} (\algacro) method overcomes the limitations of the existing stochastic  algorithms on the group sparsity identification, while maintains comparable convergence characteristics. While the main-stream works on (group) sparsity have focused on using proximal operators of regularization, our method is unique and fresh in enforcing group sparsity more effectively by leveraging half-space structure and is well supported by the theoretical analysis and empirical evaluations. We now summarize our contributions as follows.
\vspace{-0.24cm}
\begin{itemize}[leftmargin=*]
	\item \emph{Algorithmic Design:} We propose the \algacro{} to solve the disjoint group sparsity regularized problem as~(\ref{prob.x}). Initialized with a Prox-SG Step for seeking a close-enough but perhaps dense solution estimate, the algorithmic framework relies on a novel~\halfspacestep{} to exploit group sparse patterns. We delicately design the Half-Space Step with the following main features: \textit{(i)} it utilizes previous iterate as the normal direction to construct a reduced space  consisting of a set of half-spaces and the origin; \textit{(ii)} a new group projection operator maps groups of variables onto zero if they fall out of the constructed reduced space to identify group sparsity considerably more effectively than the proximal operator; and \textit{(iii)} with proper step size, the Half-Space Step enjoys the sufficient decrease property, and achieves progress to optimum in both theory and practice.
	
	\item \emph{Theoretical Guarantee:} We provide the convergence guarantees of~\algacro{}.  Moreover, we prove \algacro{} has looser requirements to identify the sparsity pattern than Prox-SG, revealing its superiority on the group sparsity exploration. Particularly, for the sparsity pattern identification, the required distance to the optimal solution $\bm{x}^*$ of~\algacro{} is better than  the distance required by~\proxsg{}.

	\item \emph{Numerical Experiments:} Experimentally, \algacro{} outperforms the state-of-the-art methods in the aspect of the group sparsity exploration, and achieves competitive objective value convergence and runtime in both convex and non-convex problems. In the popular deep learning tasks,  \algacro{} usually computes the solutions with multiple times higher group sparsity and similar generalization performance on unseen testing data than those generated by the competitors, which may be further used to construct smaller and more efficient network architectures. 
	
\end{itemize}

\section{The \algacro\ method }\label{sec.algorithm}
We state the~\algname{} (\algacro) method in Algorithm~\ref{alg:main.x.outline}. In general, it contains two stages: Initialization Stage and Group-Sparsity Stage. The first Initialization Stage employs~\proxsgstep{} (Algorithm~\ref{alg:main.x.prox_sg_step}) to search for a close-enough but usually non-sparse solution estimate. Then the second and fundamental stage proceeds \halfspacestep{}  (Algorithm~\ref{alg:main.x.halfspacestep}) started with the non-sparse solution estimate to effectively exploit the group sparsity within a sequence of reduced spaces, and converges to the group-sparse solutions with theoretical convergence property.

\begin{algorithm}[h!]
	\caption{Outline of \algacro{} for solving \eqref{prob.x}.}
	\label{alg:main.x.outline}
	\begin{algorithmic}[1]
		\State \textbf{Input:} $x_0\in\mathbb{R}^n$, $ \alpha_0\in(0,1), \epsilon\in [0,1)$, and $ N_\mathcal{P}\in \mathbb{Z}^+ $.
		\For{$k = 0,1,2,\dots$ } 
		\If{$k< N_\mathcal{P}$} \label{line:switch_prox_sg_step}
		\State Compute $x_{k+1}\leftarrow \text{Prox-SG}(x_k,\alpha_k)$ by Algorithm~\ref{alg:main.x.prox_sg_step}. 
		\Else{}
		\State Compute $x_{k+1}\leftarrow\text{Half-Space}(x_k,\alpha_k, \epsilon)$ 
		by Algorithm \ref{alg:main.x.halfspacestep}. \hspace{0.3in}
		\EndIf
		\State Update $\alpha_{k+1}$. 
		\EndFor
	\end{algorithmic}
\end{algorithm}

\begin{algorithm}[h]
	\caption{Prox-SG Step.}
	\label{alg:main.x.prox_sg_step}
	\begin{algorithmic}[1]
		\State \textbf{Input:} Current iterate $x_k$, and step size $ \alpha_k$. 
		\State Compute the stochastic gradient of $f$ on mini-batch $ \mathcal{B}_k $
		\begin{equation}
		\nabla f_{\mathcal{B}_k}(x_k)\leftarrow\frac{1}{|\mathcal{B}_k|}\sum_{i\in \mathcal{B}_k}\Grad f_i(x_k).
		\end{equation}\label{line:g_t_estimate_prox_sg}
		\State \textbf{Return }
		$x_{k+1}\leftarrow\prox_{\alpha_k\lambda\Omega(\cdot)}\left(x_k-\alpha_k\nabla f_{\mathcal{B}_k}(x_k)\right)$ \label{line:prox}.
	\end{algorithmic}
\end{algorithm}

\vspace{-.15in}
\paragraph{Initialization Stage.}
The Initialization Stage performs the vanilla proximal stochastic gradient method (Prox-SG, Algorithm~\ref{alg:main.x.prox_sg_step}) to approach the solution of~(\ref{prob.x}). At $k$th iteration, a mini-batch $\mathcal{B}_k$ is sampled to generate an unbiased estimator of the full gradient of $f$ (line~\ref{line:g_t_estimate_prox_sg}, Algorithm~\ref{alg:main.x.prox_sg_step}) to compute a trial iterate $\widehat{x}_{k+1}:=x_k-\alpha_k \Grad f_{\mathcal{B}_k}(x_k)$, where $\alpha_k$ is the step size, and $f_{\B_k}$ is the average of the instance functions $f_i$ cross $\B_k$. The next iterate $x_{k + 1}$ is then updated based on the proximal mapping 
\begin{equation}\label{eq:proxmapping}
\begin{split}
x_{k+1}&=\prox_{\alpha_k\lambda\Omega(\cdot)}(\hat{x}_{k+1})=\argmin_{x\in \mathbb{R}^{n}}\ \frac{1}{2\alpha_k}\norm{x-\hat{x}_{k+1}}^2+ \lambda\Omega(x),
\end{split}
\end{equation}
where the regularization term $\Omega(x)$ is defined in~(\ref{prob.x}). Notice that the above subproblem~(\ref{eq:proxmapping}) has a closed-form solution, where for each $g\in \G$, we have
\begin{equation}\label{eq:proximal_x_kp1}
[x_{k+1}]_g=\max\left\{0,1-\alpha_k\lambda /\norm{[\widehat{x}_{k+1}]_g}\right\}\cdot [\widehat{x}_{k+1}]_g.
\end{equation}
In~\algacro{}, the Initialization Stage proceeds Prox-SG Step $N_\P$ times as a localization mechanism to seek an estimation which is close enough to a solution of problem~(\ref{prob.x}), where $N_\P:=\min\{k:k\in\mathbb{Z}^+, \norm{\bm{x}_k-\bm{x}^*}\leq R/2\}$ associated with a positive constant $R$ related to the optima, see~\eqref{def:R} in Appendix~\ref{appendix:convergence_analysis}. In practice, although the close-enough requirement is perhaps hard to be verified, we empirically suggest to keep running the Prox-SG Step until observing some stage-switch signal by testing on the stationarity of objective values, norm of (sub)gradient or validation accuracy similarly to~\citep{zhang2020statistical}. However, the Initialization Stage alone is \textit{insufficient} to exploit the group sparsity structure, \ie, the computed solution estimate is typically dense, due to the randomness and the moderate truncation mechanism of proximal operator constrained in its projection region, \ie, the trial iterate $[\widehat{x}_{k+1}]_g$ is projected to zero only if it falls into an $\ell_2$-ball centered at the origin with radius $\alpha_k\lambda $ by~(\ref{eq:proximal_x_kp1}). Our remedy is to incorporate it with the following Half-Space Step, which exhibits an effective sparsity promotion mechanism while still remains the convergent property. 

\begin{algorithm}[h]
	\caption{\halfspacestep}
	\label{alg:main.x.halfspacestep}
	\begin{algorithmic}[1]
		\State \textbf{Input:} Current iterate $x_k$, step size $ \alpha_k$, and $ \epsilon $.
		\State Compute the stochastic gradient of {$\Psi$} on $ \mathcal{I}^{\neq 0}(x_k)  $ by mini-batch $ \mathcal{B}_k$
		\begin{align}
		& [\Grad\Psi_{\mathcal{B}_k}(x_k)]_{\mathcal{I}^{\neq 0}(x_k) }\gets \frac{1}{|{\mathcal{B}_k}|}\sum_{i\in \mathcal{B}_k}[\Grad\Psi_i(x_k)]_{\mathcal{I}^{\neq 0}(x_k) }
		\end{align}\label{line:g_k_estimate_half_space_step}
		\State Compute
		$[\tilde{x}_{k+1}]_{\mathcal{I}^{\neq 0}(x_k)}\leftarrow [x_{k}-\alpha_k\Grad \Psi_{\mathcal{B}_k}(x_k)]_{\mathcal{I}^{\neq 0}(x_k) }$ and $[\tilde{x}_{k+1}]_{\mathcal{I}^{ 0}(x_k)}\leftarrow 0$.\label{line:half_space_trial_iterate}
		\For{each group $ g $ in $ \I^{\neq 0}(x_{k})  $}\label{line:half_space_project_start}
		\If{$ [\tilde{x}_{k+1}]_{g}^\top [x_{k}]_g <\epsilon\norm{[x_k]_g}^2$} \label{line:half_space_set_new_iterate_start}
		\State $ [\tilde{x}_{k+1}]_{g} \gets 0 $. \label{line:half_space_project}
		\EndIf
		\EndFor
		\State \textbf{Return}\ $x_{k+1}\gets \tilde{x}_{k+1}$.\label{line:half_space_set_new_iterate_end}
	\end{algorithmic}
\end{algorithm}

\paragraph{Group-Sparsity Stage.} 
The Group-Sparsity Stage is designed to effectively determine the groups of zero variables and capitalize convergence characteristic, which is in sharp contrast to other heuristic aggressive weight pruning methods but typically lacking theoretical guarantee~\citep{li2016pruning,luo2017thinet}. The underlying intuition of its atomic~\halfspacestep{}~(Algorithm~\ref{alg:main.x.halfspacestep}) is to project $[x_k]_g$ to zero only if $-[x_k]_g$ serves as a descent 
step to $\Psi(x_k)$, \ie, $-[x_k]_g^\top[\Grad \Psi(x_k))]_g<0$, hence updating $[x_{k+1}]_g\gets[x_k]_g-[x_k]_g=0$ still results in some progress to the optimality. Before introducing that, we first define the following index sets for any $ x\in\mathbb{R}^n $:
\begin{equation}\label{def:I_set}
\mathcal{I}^0(x) := \{g: g\in\mathcal{G}, [x]_g=0\}\ \text{and}\ \mathcal{I}^{\neq 0}(x) :=\{g: g\in\mathcal{G}, [x]_g\neq 0\},
\end{equation}
where $\I^{0}(x)$ represents the indices of groups of zero variables at $x$, and $\I^{\neq 0}(x)$ indexes the  groups of nonzero variables at $x$. To proceed, we further define an artificial set that $x$ lies in: 
\begin{equation}\label{def:polytope}
\S(x)\coloneqq \left\{z\in\mathbb{R}^n : [z]_g=0 \ \text{if}\ g\in\I^0{(x)},\text{and}\ [z]_g^T[x]_g\geq \epsilon \norm{[x]_g}^2\ \text{if}\ g\in\I^{\neq 0}(x)\right\} \bigcup \{0\},
\end{equation}
which consists of half-spaces and the origin. Here the parameter $\epsilon > 0$ controls the grey region presented in Figure~\ref{figure:project_region}, and the exact way to set $\epsilon$ will be discussed in Section~\ref{sec:exp} and Appendix. Hence, $x$ inhabits $\S(x_k)$, \ie, $x\in\S(x_k)$, only if: \textit{(i)} $[x]_g$ lies in the upper half-space for all $g\in\mathcal{I}^{\neq 0}(x_k)$ for some prescribed $\epsilon\in[0,1)$ as shown in Figure~\ref{figure:half_space_projection}; and \textit{(ii)} $[x]_g$ equals to zero for all $g\in\mathcal{I}^0(x_k)$. 
\begin{figure}[t]
	\centering
	\begin{subfigure}{0.48\textwidth}
		\includegraphics[scale=0.56]{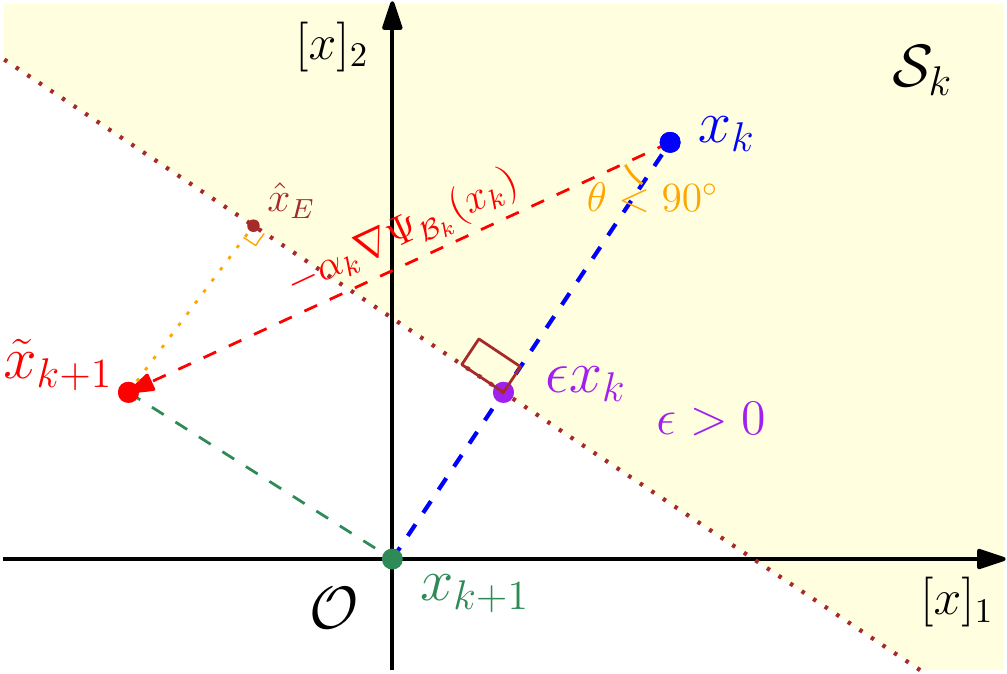}
		\caption{Half-Space Projection}
		\label{figure:half_space_projection}
	\end{subfigure}
	\begin{subfigure}{0.48\textwidth}
		\includegraphics[scale=0.6]{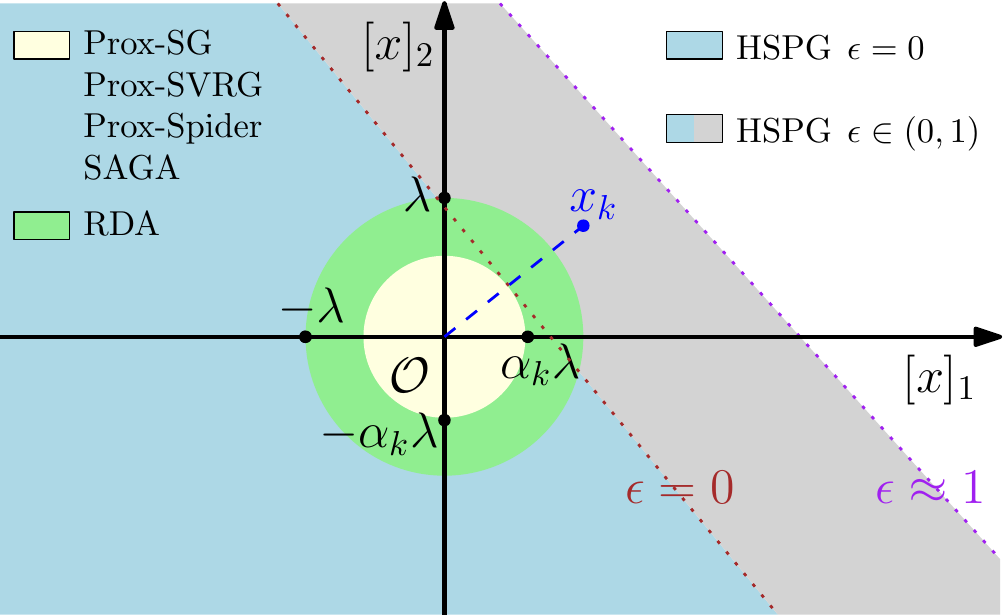}
		\caption{Projection Region}
		\label{figure:project_region}
	\end{subfigure}
	\caption{Illustration of Half-Space Step with projection in \eqref{def:proj}, where  $\mathcal{G}=\{\{1,2\}\}$.}
	\label{figure:proj_euclidean}
	\vspace{-0.3cm}
\end{figure}
The fundamental assumption for~Half-Space Step to success is that:  the~Initialization Stage has produced a (possibly \textit{non-sparse}) solution estimate $x_k$ nearby a group sparse solution $x^*$ of problem~(\ref{prob.x}), \ie, the optimal distance $\norm{x_k-x^*}$ is sufficiently small. As seen in Appendix, it further indicates that the group sparse optimal solution $x^*$ inhabits $\S_k:=\S(x_k)$, which implies that $\S_k$ has already covered the group-support of $x^*$, \ie, $\I^{\neq 0}(x^*)\subseteq \I^{\neq 0}(x_k)$. Our goal now becomes minimizing $\Psi(x)$ over $\S_k$ to identify the remaining groups of zero variables, \ie, $\I^0(x^*)/\I^{0}(x_k)$, which is formulated as the following smooth optimization problem: 
\begin{equation}\label{prob.half_space_sub_problem}
x_{k+1}=\argmin_{x\in\S_k}\ \Psi(x)=f(x)+\lambda\Omega(x).
\end{equation}
By the definition of $\S_k$, $[x]_{\mathcal{I}^0(x_k)}\equiv 0$ are constrained as fixed during Algorithm~\ref{alg:main.x.halfspacestep} proceeding, and only the entries in $\I^{\neq 0}(x_k)$ are allowed to move. Hence $\Psi(x)$ is smooth on $\S_k$, and~(\ref{prob.half_space_sub_problem}) is a reduced space optimization problem.
A standard way to solve problem~(\ref{prob.half_space_sub_problem}) would be the stochastic gradient descent equipped with Euclidean projection~\citep{nocedal2006numerical}. However, such a projected method rarely produces zero (group) variables as the dense $\hat{x}_{E}$ illustrated in Figure~\ref{figure:half_space_projection}. To address it, we introduce a novel projection operator to effectively conduct group projection as follows.

As stated in Algorithm~\ref{alg:main.x.halfspacestep}, we first approximate the gradient of $\Psi$ on the free variables in $\I^{\neq 0}(x_k)$ by $[\Grad \Psi_{\B_k}(x_k)]_{\I^{\neq 0}(x_k)}$ (line~\ref{line:g_k_estimate_half_space_step}, Algorithm~\ref{alg:main.x.halfspacestep}), then employ SGD to compute a trial point $\widetilde{x}_{k+1}$ (line~\ref{line:half_space_trial_iterate}, Algorithm~\ref{alg:main.x.halfspacestep}) which is passed into a new projection operator $\proj_{\S_k}(\cdot)$ defined as
\begin{equation}\label{def:proj}
\left[\proj_{\S_k}(z)\right]_g\coloneqq\bigg\{
\begin{array}{ll}
[z]_g & \text{if}\ [z]_g^T[x_k]_g\geq \epsilon\norm{[x_k]_g}^2,\\ 
0 & \text{otherwise}.
\end{array}
\end{equation}
The above projector of form~(\ref{def:proj}) is not the standard Euclidean projection operator in most cases\footnote{ Unless $\Omega(x)$ is $\norm{x}_1$ where each $g\in\G$ is singleton, then $\S_k$ becomes an orthant face~\citep{chen2020orthant}.}, but still satisfies the following two advantages: \textit{(i)} the actual search direction $d_k:=(\proj_{\S_k}(\tilde{x}_{k+1})-x_k)/\alpha_k$ performs as a descent direction to $\Psi_{\B_k}(x_k):=f_{\B_k}(x_k)+\lambda\Omega(x_k)$, \ie, $[d_k]_g^\top[\Grad \Psi_{\B_k}(x_k))]_g<0$ as $\theta<90^{\circ}$ in Figure~\ref{figure:half_space_projection}, then the progress to the optimum is made via the sufficient decrease property as drawn in Lemma~\ref{lemma:sufficient_decrease_half_space}; and \textit{(ii)} effectively project groups of variables to zero simultaneously if the inner product of corresponding entries is sufficiently small. In contrast, the Euclidean projection operator is far away effective to promote group sparsity, as the Euclidean projected point $\hat{x}_E\neq 0$ versus $x_{k+1}=\proj_{\S_k}(\tilde{x}_{k+1})=0$ shown in Figure~\ref{figure:half_space_projection}.
\begin{lemma}\label{lemma:sufficient_decrease_half_space}
	Algorithm~\ref{alg:main.x.halfspacestep} yields the next iterate $x_{k+1}$ as $\textit{Proj}_{\S_k}(x_k-\alpha_k\Grad \Psi_{\B_k}(x_k))$, then the search direction $d_k:=(x_{k+1}-x_k)/\alpha_k$ is a descent direction for $\Psi_{\B_k}(x_k)$, \ie, $d_k^\top\Grad \Psi_{\B_k}(x_k)<0$. Moreover, letting $L$ be the Lipschitz constant for $ \Grad \Psi_{\mathcal{B}_k} $ on the feasible domain, and $\hat{\mathcal{G}}_k:=\I^{\neq 0}(x_k)\bigcap \I^{0}(x_{k+1})$ and $\tilde{\mathcal{G}}_k:=\I^{\neq 0}(x_k)\bigcap \I^{\neq 0}(x_{k+1})$ be the sets of groups which projects or not onto zero, we have
	\begin{equation}
	\small
	\begin{split}
	\Psi_{\mathcal{B}_k}(x_{k+1})\leq & \Psi_{\mathcal{B}_k}(x_{k})-\left(\alpha_k-\frac{\alpha_k^2L}{2}\right)\sum_{g\in\tilde{\G}_k}\norm{[\Grad \Psi_{\mathcal{B}_k}(x_{k})]_g}^2-\left(\frac{1-\epsilon}{\alpha_k}-\frac{L}{2}\right)\sum_{g\in\hat{\G}_k}\norm{[x_{k}]_g}^2.
	\end{split}
	\end{equation}
	\vspace{-0.4cm}
\end{lemma}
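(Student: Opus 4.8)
The plan is to exploit the fact that the projection operator $\proj_{\S_k}$ in~\eqref{def:proj} acts independently on each group, so the entire analysis can be done groupwise and then summed. First I would write out the search direction $d_k=(x_{k+1}-x_k)/\alpha_k$ on each of the three disjoint pieces of $\G$. On $\I^{0}(x_k)$ the coordinates stay fixed at zero, so $[d_k]_g=0$; on $\tilde{\G}_k$ the iterate is the plain reduced-space SGD update $[x_{k+1}]_g=[x_k]_g-\alpha_k[\Grad\Psi_{\B_k}(x_k)]_g$, so $[d_k]_g=-[\Grad\Psi_{\B_k}(x_k)]_g$; and on $\hat{\G}_k$ the trial point is truncated to zero, so $[x_{k+1}]_g=0$ and $[d_k]_g=-[x_k]_g/\alpha_k$. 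These three cases are the backbone of every subsequent computation.

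The descent-direction claim then follows by forming $d_k^\top\Grad\Psi_{\B_k}(x_k)=\sum_g[d_k]_g^\top[\Grad\Psi_{\B_k}(x_k)]_g$. Each $\tilde{\G}_k$ term contributes $-\|[\Grad\Psi_{\B_k}(x_k)]_g\|^2\le 0$. The crux is the $\hat{\G}_k$ terms, and here I would unfold the truncation trigger on line~\ref{line:half_space_set_new_iterate_start} of Algorithm~\ref{alg:main.x.halfspacestep}, namely $[\tilde{x}_{k+1}]_g^\top[x_k]_g<\epsilon\|[x_k]_g\|^2$. Substituting $[\tilde{x}_{k+1}]_g=[x_k]_g-\alpha_k[\Grad\Psi_{\B_k}(x_k)]_g$ and rearranging yields the key inequality $[\Grad\Psi_{\B_k}(x_k)]_g^\top[x_k]_g>\frac{1-\epsilon}{\alpha_k}\|[x_k]_g\|^2>0$, where positivity uses $\epsilon\in[0,1)$. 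Hence each $\hat{\G}_k$ contribution $-[x_k]_g^\top[\Grad\Psi_{\B_k}(x_k)]_g/\alpha_k$ is strictly negative, and $d_k^\top\Grad\Psi_{\B_k}(x_k)<0$ whenever $d_k\ne 0$.

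For the sufficient-decrease bound I would invoke the standard descent lemma for the $L$-Lipschitz gradient $\Grad\Psi_{\B_k}$, which, using $x_{k+1}-x_k=\alpha_k d_k$, gives $\Psi_{\B_k}(x_{k+1})\le\Psi_{\B_k}(x_k)+\alpha_k\,\Grad\Psi_{\B_k}(x_k)^\top d_k+\frac{L\alpha_k^2}{2}\|d_k\|^2$. Plugging in the groupwise expressions for the inner product and for $\|d_k\|^2=\sum_{\tilde{\G}_k}\|[\Grad\Psi_{\B_k}(x_k)]_g\|^2+\alpha_k^{-2}\sum_{\hat{\G}_k}\|[x_k]_g\|^2$, the $\tilde{\G}_k$ groups collect immediately into the coefficient $(\alpha_k-\alpha_k^2L/2)$. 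For the $\hat{\G}_k$ groups I would apply the key inequality once more to bound the cross term $-[x_k]_g^\top[\Grad\Psi_{\B_k}(x_k)]_g$ above by $-\frac{1-\epsilon}{\alpha_k}\|[x_k]_g\|^2$, which combines with the $\frac{L}{2}\|[x_k]_g\|^2$ arising from the quadratic term to yield exactly the coefficient $-(\frac{1-\epsilon}{\alpha_k}-\frac{L}{2})$, matching the claimed bound.

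The main obstacle is the $\hat{\G}_k$ analysis: because $\proj_{\S_k}$ is \emph{not} the Euclidean projection onto $\S_k$, one cannot lean on the usual nonexpansiveness or first-order optimality inequalities of projected gradient descent. Everything instead hinges on extracting the lower bound $[\Grad\Psi_{\B_k}(x_k)]_g^\top[x_k]_g>\frac{1-\epsilon}{\alpha_k}\|[x_k]_g\|^2$ directly from the truncation criterion, and on recognizing that on $\hat{\G}_k$ the effective displacement is $-[x_k]_g$ rather than a gradient step. I would also be careful that $\Grad\Psi_{\B_k}(x_k)$ denotes the reduced-space stochastic gradient supported on $\I^{\neq 0}(x_k)$, so that the fixed coordinates in $\I^{0}(x_k)$ contribute nothing to either side of every identity above.
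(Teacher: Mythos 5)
Your proposal is correct and follows essentially the same route as the paper's proof: the same three-way groupwise decomposition of $d_k$, the same key inequality $[\Grad\Psi_{\B_k}(x_k)]_g^\top[x_k]_g>\frac{1-\epsilon}{\alpha_k}\|[x_k]_g\|^2$ extracted from the truncation trigger, and the same application of the descent lemma with the groupwise substitution of the inner product and $\|d_k\|^2$. No gaps to report.
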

We then intuitively illustrate the strength of~\algacro{} on group sparsity exploration. In fact, the half-space projection~(\ref{def:proj}) is a more effective sparsity promotion mechanism compared to the existing methods. Particularly, it benefits from a much larger projection region to map a reference point $\hat{x}_{k+1}:=x_k-\alpha_k\Grad f_{\mathcal{B}_k}(x_k)$ or its variants to zero. As the 2D case described in Figure~\ref{figure:project_region}, the projection regions of~Prox-SG, Prox-SVRG,~\proxspider{} and SAGA are $\ell_2$-balls with radius as $\alpha_k\lambda$. In stochastic learning, especially deep learning tasks, the step size $\alpha_k$ is usually selected around $10^{-3}$ to $10^{-4}$ or even smaller for convergence. Together with the common setting of $\lambda \ll 1$, their projection regions would vanish rapidly, resulting in the difficulties to produce group sparsity. As a sharp contrast, even though $\alpha_k\lambda$ is near zero, the projection region of~\algacro{} $\{x: x_k^Tx< (\alpha_k\lambda + \epsilon\norm{x_k})\norm{x_k}\}$ (seen in Appendix) is still an open half-space which contains those $\ell_2$ balls as well as~\rda's if $\epsilon$ is large enough. 
Moreover, the positive control parameter $\epsilon$ adjusts the level of aggressiveness of group sparsity promotion~(\ref{def:proj}),~\ie, the larger the more aggressive, and meanwhile maintains the progress to the optimality by Lemma~\ref{lemma:sufficient_decrease_half_space}. In practice, proper fine tuning $\epsilon$ is sometimes required to achieve both group sparsity enhancement and sufficient decrease on objective value as will see in Section~\ref{sec:exp}.

\textbf{Intuition of Two-Stage Method:} To end this section, we discuss the advantage of designing such two stage schema rather than an adaptive switch back and forth between the Prox-SG Step and~\halfspacestep{} based on some evaluation switching criteria, 
as many multi-step deterministic optimization algorithms~\citep{chen2017reduced}. In fact, we numerically observed that switching back to the Prox-SG Step consistently deteriorate the progress of group sparsity exploration by~\halfspacestep{} while without obvious gain on convergence. Such regression on group sparsity by the Prox-SG Step is less attractive in realistic applications, \eg, model compression, where people usually possess heavy models of high generalization accuracy ahead and want to filter out the redundancy effectively. Therefore, in term of the ease of application, we end at organizing Prox-SG Step and~\halfspacestep{} as such a two-stage schema, controlled by a switching hypermeter $N_\P$. In theory, we require $N_\P$ sufficiently large to let the initial iterate of~\halfspacestep{} be close enough to the local minimizer as shown in Section~\ref{sec:convergence}. In practice,~\algacro{} is sensitive to the choice of $N_\P$ at early iterations, \ie, switching to~\halfspacestep{} too early may result accuracy loss. But such sensitivity vanishes rapidly if switching to~\halfspacestep{} after some acceptable evaluation switching criteria. 

\section{Convergence Analysis}\label{sec:convergence}

In this section, we give the convergence guarantee of our \algacro{}.
Towards that end, we make the following widely used  assumption in optimization literature~\citep{xiao2014proximal,yang2019stochastic} and active set identification analysis of regularization problem~\citep{nutini2019active,chen2018farsa}.

\begin{assumption}\label{assumption}
	Each $f_i:\mathbb{R}^n\to \mathbb{R}$, for $i=1,2,\cdots, N$, is differentiable and bounded below. Their gradients $\Grad f_{i}(\bm{x})$ are Lipschitz continuous, and let $L$ be the shared Lipschitz constant. 
\end{assumption}
\begin{assumption}
	The least and the largest $\ell_2$-norm of non-zero groups in $\bm{x}^*$ are lower and upper bounded by some constants, \ie, $0<2\delta_1 :=\min_{g\in \mathcal{I}^{\neq 0}(\bm{x}^*)}\norm{[\bm{x}^*]_g}$ and $\ 0<2\delta_2 :=\max_{g\in \mathcal{I}^{\neq 0}(\bm{x}^*)}\norm{[\bm{x}^*]_g}$. Moreover, we request a common strict complementarity on any $\B$, \ie, $0<2\delta_3:=\min_{g\in\mathcal{I}^0(\bm{x}^*)}\left(\lambda - \norm{[\Grad f_{\B}(\bm{x}^*)]_g}\right)$ for regularization optimization.
\end{assumption}
\textbf{Notations:} 
Let $\bm{x}^*$ be a local minimizer of problem (\ref{prob.x}) with group sparsity property, $\Psi^*$ be the local minimum value corresponding to $\bm{x}^*$, and $\{\bm{x}_k\}_{k=0}^\infty$ be the iterates generated from Algorithm~\ref{alg:main.x.outline}. Denote the gradient mapping of $\Psi(\bm{x})$ and its estimator on mini-batch $\mathcal{B}$ as $\bm{\bm{\xi}}_{\eta}(\bm{x}):=\frac{1}{\eta}\left(\bm{x}-\prox_{\eta\lambda\Omega(\cdot)}(\bm{x}-\eta \Grad f(\bm{x}))\right)$ and $\bm{\bm{\xi}}_{\eta,\mathcal{B}}(\bm{x}):=\frac{1}{\eta}\left( \bm{x}-\prox_{\eta\lambda\Omega(\cdot)}(\bm{x}-\eta \Grad f_\mathcal{B}(\bm{x}))\right)$ respectively. We say $\tilde{\bm{x}}$ a stationary point of $\Psi(\bm{x})$ if $\bm{\bm{\xi}}_{\eta}(\tilde{\bm{x}})=0$. 
To be simple, let $\widetilde{\mathcal{X}}$ be a neighbor of $\bm{x}^*$ as $\widetilde{\mathcal{X}}:=\{\bm{x}: \norm{\bm{x}-\bm{x}^*}\leq R\}$ with $R$ as a positive constant related to $\delta_1, \delta_2$ and $\epsilon$ (see~\eqref{def:R} in Appendix~\ref{appendix:convergence_analysis}), and $M$ be the supremum of $\norm{\partial \Psi(\bm{x})}$ on the compact set $\widetilde{\mathcal{X}}$. 

\textbf{Remark:} Assumption~\ref{assumption} implies that $\Grad f_{\B}(\bm{x})$ measured on mini-batch $\B$ is Lipschitz continuous on $\mathbb{R}^n$ with the same Lipschitz constant $L$, while $\Grad \Psi_{\B}(\bm{x})$ is not as shown in Appendix. However, the Lipschitz continuity of $\Grad \Psi_{\B}(\bm{x})$ still holds on $\mathcal{X}=\{\bm{x}: \norm{[\bm{x}]_g}\geq \delta_1\ \text{for each}\ g\in\mathcal{G}\}$ by excluding a $\ell_2$-ball centered at the origin with radius $\delta_1$ from $\mathbb{R}^n$. For simplicity, let $\Grad \Psi_\B(\bm{x})$ share the same Lipschitz constant $L$ on $\mathcal{X}$ with $\Grad f_\B(\bm{x})$, since we can always select the bigger value as their shared Lipschitz constant. Now, we state the first main theorem of~\algacro{}.
\begin{theorem}\label{thm:convergence}
	Suppose $f$ is convex on $\widetilde{\mathcal{X}}$, $\epsilon\in\left[0,\min\left\{\frac{\delta_1^2}{\delta_2}, \frac{2\delta_1-R}{2\delta_2+R}\right\}\right)$, $\norm{\bm{x}_{K}-\bm{x}^*}\leq\frac{R}{2}$ for $K\geq N_\P$. Set $k:=K+t$, $(t\in\mathbb{Z}^+)$. Then for any $\tau\in(0,1)$, there exist step size $\alpha_k=\O(\frac{1}{\sqrt{N}t})\in\left(0,\min\left\{\frac{2(1-\epsilon)}{L}, \frac{1}{L},\frac{2\delta_1-R-\epsilon(2\delta_2+R)}{M}\right\}\right)$, and mini-batch size $|\B_k|=\O(t)\leq N-\frac{N}{2M}$, such that $\{\bm{x}_k\}$ converges to some stationary point in expectation with probability at least $1-\tau$, \ie, $\mathbb{P}(\lim_{k\rightarrow \infty} \mathbb{E}\left[ \norm{\bm{\bm{\xi}}_{\alpha_k,\mathcal{B}_k}(\bm{x}_k)}\right]=0)\geq 1-\tau$.
\end{theorem}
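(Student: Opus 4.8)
The plan is to promote the single-step estimate of Lemma~\ref{lemma:sufficient_decrease_half_space} into a trajectory-level statement about the proximal gradient mapping $\bm{\xi}_{\alpha_k,\B_k}$. The first task is a containment argument: starting from $\norm{\bm{x}_K-\bm{x}^*}\leq R/2$, I would show that for $k\geq K$ the iterates stay inside $\widetilde{\mathcal{X}}$ (hence inside $\mathcal{X}$, where $\nabla\Psi_{\B_k}$ is $L$-Lipschitz) with probability at least $1-\tau$. Since a single Half-Space Step displaces $\bm{x}_k$ by at most $\alpha_k\norm{\partial\Psi(\bm{x}_k)}\leq\alpha_k M$, the bound $\alpha_k<\frac{2\delta_1-R-\epsilon(2\delta_2+R)}{M}$ is exactly what prevents any one step from leaving the ball of radius $R$ around $\bm{x}^*$ or from shrinking a nonzero group below the radius $\delta_1$ that the Lipschitz property of $\nabla\Psi_{\B_k}$ requires. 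I would make this rigorous with a stopping-time / supermartingale argument: define the first exit time from $\widetilde{\mathcal{X}}$, run all subsequent estimates on the stopped process, and tune the step-size and batch-size constants so that the expected outward drift is small enough to keep the total escape probability below $\tau$.

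On the containment event I would first recall that the closeness hypothesis forces $\bm{x}^*\in\S_k$ with $\I^{\neq 0}(\bm{x}^*)\subseteq\I^{\neq 0}(\bm{x}_k)$ (as noted above~\eqref{prob.half_space_sub_problem}), so that the Half-Space Step genuinely minimizes the reduced smooth problem whose stationary points are those detected by $\bm{\xi}$. Taking the conditional expectation over $\B_k$ in Lemma~\ref{lemma:sufficient_decrease_half_space} and using that $\nabla f_{\B_k}$ is unbiased with variance of order $1/|\B_k|$, the two negative terms reduce, up to a variance remainder, to $-(\alpha_k-\tfrac{\alpha_k^2L}{2})\sum_{g\in\tilde{\G}_k}\norm{[\nabla\Psi(\bm{x}_k)]_g}^2-(\tfrac{1-\epsilon}{\alpha_k}-\tfrac{L}{2})\sum_{g\in\hat{\G}_k}\norm{[\bm{x}_k]_g}^2$. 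The key step is to show that these two sums together lower-bound $c\,\alpha_k\norm{\bm{\xi}_{\alpha_k,\B_k}(\bm{x}_k)}^2$ for some $c>0$: on $\tilde{\G}_k$ the mapping coincides with the smooth gradient, on $\hat{\G}_k$ it is controlled by the group magnitudes $\norm{[\bm{x}_k]_g}$, and on $\I^0(\bm{x}_k)$ strict complementarity ($\delta_3$) keeps the mapping at zero. This produces a one-step descent bound of the form $\E[\Psi(\bm{x}_{k+1})\mid\bm{x}_k]\leq\Psi(\bm{x}_k)-c\,\alpha_k\norm{\bm{\xi}_{\alpha_k,\B_k}(\bm{x}_k)}^2+\mathcal{O}(\alpha_k^2/|\B_k|)$, valid whenever the chosen $\alpha_k$ keeps both parenthetical coefficients positive, which is guaranteed by $\alpha_k<\min\{2(1-\epsilon)/L,1/L\}$.

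I would then telescope this bound over $t$. Because each $f_i$ is bounded below, $\Psi$ is bounded below on $\widetilde{\mathcal{X}}$, so summation yields $\sum_k c\,\alpha_k\E\norm{\bm{\xi}_{\alpha_k,\B_k}(\bm{x}_k)}^2\leq\Psi(\bm{x}_K)-\Psi^*+\sum_k\mathcal{O}(\alpha_k^2/|\B_k|)$. With the prescribed schedules $\alpha_k=\mathcal{O}(1/(\sqrt{N}t))$ and $|\B_k|=\mathcal{O}(t)$ the remainder series behaves like $\sum_t\mathcal{O}(1/(Nt^3))$ and converges, while $\sum_t\alpha_k=\infty$; a finite weighted sum against a divergent weight forces $\liminf_k\E\norm{\bm{\xi}_{\alpha_k,\B_k}(\bm{x}_k)}=0$. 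Finally I would invoke the local convexity of $f$: it makes the expected objective gap essentially monotone up to summable noise and drives the iterates themselves toward $\bm{x}^*$, at which the mapping vanishes; together with the continuity of $\bm{\xi}$ on $\mathcal{X}$ this upgrades the $\liminf$ to the full limit $\lim_k\E\norm{\bm{\xi}_{\alpha_k,\B_k}(\bm{x}_k)}=0$, and intersecting with the $1-\tau$ containment event gives the stated conclusion.

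I expect the main obstacle to be the coupling between the Half-Space dynamics and the mapping $\bm{\xi}$, together with the simultaneous probabilistic containment. The algorithm never evaluates $\prox_{\eta\lambda\Omega}$ during the Group-Sparsity Stage, so the step that identifies the two decrease terms of Lemma~\ref{lemma:sufficient_decrease_half_space} with $\alpha_k\norm{\bm{\xi}_{\alpha_k,\B_k}(\bm{x}_k)}^2$ relies delicately on eventual correct support identification and on strict complementarity over $\I^0(\bm{x}^*)$. Moreover, since $L$ and $M$ are only valid inside $\widetilde{\mathcal{X}}\cap\mathcal{X}$, the stopping-time estimate and the telescoped descent must be run on one and the same good event rather than sequentially, and preventing the finite-sum sampling variance (through $|\B_k|\leq N-\tfrac{N}{2M}$) from re-inflating the escape probability is the most delicate part of the bookkeeping.
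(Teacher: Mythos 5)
Your proposal is correct in outline and shares the paper's skeleton — probabilistic containment of the iterates near $\bm{x}^*$, the sufficient decrease of Lemma~\ref{lemma:sufficient_decrease_half_space} taken in expectation, telescoping against the boundedness of $\Psi$ from below, and the schedule $\sum_k\alpha_k=\infty$, $\sum_k\alpha_k^2/|\B_k|<\infty$ to extract a vanishing limit — but it diverges from the paper in two technically meaningful places. First, for containment you propose a stopping-time/supermartingale argument on the first exit from $\widetilde{\mathcal{X}}$; the paper instead bounds the \emph{entire} accumulated sampling-error series $\sum_{t}\alpha_{K+t}\norm{e_{\B_{K+t}}(\cdot)}$ once and for all via Chebyshev's inequality (Lemma~\ref{lemma:convergence-series}) and then runs a deterministic induction on $\norm{\bm{x}_\ell-\bm{x}^*}\leq R$ conditioned on that single good event (Lemmas~\ref{lemma:k_plus_1_optimal_dist_non_increase} and~\ref{lemma:x_k_in_neghibors}). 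The paper's route sidesteps the optional-stopping bookkeeping you flag as delicate, at the price of needing the error series to be summable uniformly over trajectories; your route is more standard but you would still have to verify the drift condition group-by-group, since on $\hat{\G}_k$ the displacement is $\norm{[\bm{x}_k]_g}$ rather than $\alpha_k\norm{[\Grad\Psi_{\B_k}(\bm{x}_k)]_g}$ (the projection trigger does bound this by $\alpha_k M/(1-\epsilon)$, so it goes through). Second, and more substantively, you propose to lower-bound the two decrease terms by $c\,\alpha_k\norm{\bm{\xi}_{\alpha_k,\B_k}(\bm{x}_k)}^2$, using strict complementarity ($\delta_3$) to keep the mapping at zero on $\I^0(\bm{x}_k)$. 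The paper never makes this bridge explicit: it deduces $\liminf_k\sum_{g\in\tilde{\G}_k}\mathbb{E}[\norm{[\Grad\Psi(\bm{x}_k)]_g}^2]=0$ along a subsequence and then passes to the stated conclusion about $\bm{\xi}$ informally. Your version is therefore the more honest one on this point, but it is also where the real work lies — relating the half-space update (which never evaluates the proximal operator) to the proximal gradient mapping requires exactly the support-identification and complementarity facts you cite, and carrying the resulting constant $c$ through the telescoping is the step I would want to see written out before calling the argument complete.
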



\textbf{Remark:} Theorem~\ref{thm:convergence} only requires local convexity of $f$ on a neighborhood $\widetilde{\mathcal{X}}$ of $\bm{x}^*$ while itself can be non-convex in general. This local convexity assumption appears in many non-convex analysis, such as: tensor decomposition~\citep{ge2015escaping} and shallow neural networks~\citep{zhong2017recovery}. Theorem~\ref{thm:convergence} implies that if the $K$th iterate locates close enough to $\bm{x}^*$, the step size $\alpha_k$ and mini-batch size $|\B_k|$ is set as above, (it further indicates $\bm{x}^*$ inhabits the $\{\S_k\}_{k\geq K}$ of all subsequent iterates updated by~\halfspacestep{} with high probability in Appendix), then the~\halfspacestep{} in Algorithm~\ref{alg:main.x.halfspacestep} guarantees the convergence to the stationary point. The $\O(t)$ mini-batch size is commonly used in 
the analysis of stochastic algorithms, \eg, Adam and Yogi~\citep{zaheer2018adaptive}. Later based on numerical results in Section~\ref{sec:exp}, we observe that a much weaker increasing or even constant mini-batch size is sufficient. In fact, experiments show that practically, a reasonably large mini-batch size can work well if the variance is not large. Although the assumption $\norm{\bm{x}_{K}-\bm{x}^*}<R/2$ is hard to be verified in practice, setting $N_\mathcal{P}$  large enough usually performs quite well.

We then reveal the sparsity identification guarantee of~\algacro{} as stated in Theorem~\ref{thm:sparsity_recovery_rate_hbproxsg}.

\begin{theorem}\label{thm:sparsity_recovery_rate_hbproxsg}
	If $k\geq N_\P$ and $\norm{\bm{x}_k-\bm{x}^*}\leq \frac{2\alpha_k\delta_3}{1-\epsilon+\alpha_kL}$, then~\algacro{} yields $\I^0(\bm{x}^*)\subseteq \I^0(\bm{x}_{k+1})$. 
\end{theorem}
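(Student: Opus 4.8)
The plan is to argue group by group that every group which vanishes at the optimum $\bm{x}^*$ is forced to zero by a single Half-Space Step. Fix any $g\in\I^0(\bm{x}^*)$, so $[\bm{x}^*]_g=0$. Since $k\geq N_\P$, the iterate $\bm{x}_{k+1}$ is produced by Algorithm~\ref{alg:main.x.halfspacestep}. If $g\in\I^0(\bm{x}_k)$, the step keeps $[\bm{x}_{k+1}]_g=0$ by construction and there is nothing to prove. The only real work is the case $g\in\I^{\neq 0}(\bm{x}_k)$, where I must verify that the projection test on line~\ref{line:half_space_set_new_iterate_start} fires, i.e. that $[\tilde{\bm{x}}_{k+1}]_g^\top[\bm{x}_k]_g<\epsilon\norm{[\bm{x}_k]_g}^2$.

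Substituting $[\tilde{\bm{x}}_{k+1}]_g=[\bm{x}_k]_g-\alpha_k[\Grad\Psi_{\B_k}(\bm{x}_k)]_g$ and rearranging, this firing condition is equivalent to the scalar inequality $(1-\epsilon)\norm{[\bm{x}_k]_g}^2<\alpha_k\,[\Grad\Psi_{\B_k}(\bm{x}_k)]_g^\top[\bm{x}_k]_g$. I would then split the gradient as $\Grad\Psi_{\B_k}=\Grad f_{\B_k}+\lambda\Grad\Omega$; because $g\in\I^{\neq 0}(\bm{x}_k)$ the regularizer is smooth there and contributes exactly $\lambda[\bm{x}_k]_g/\norm{[\bm{x}_k]_g}$, so that $[\Grad\Psi_{\B_k}(\bm{x}_k)]_g^\top[\bm{x}_k]_g=[\Grad f_{\B_k}(\bm{x}_k)]_g^\top[\bm{x}_k]_g+\lambda\norm{[\bm{x}_k]_g}$. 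Dividing through by $\norm{[\bm{x}_k]_g}>0$ and lower-bounding the inner product by $-\norm{[\Grad f_{\B_k}(\bm{x}_k)]_g}$ via Cauchy--Schwarz, it suffices to prove $(1-\epsilon)\norm{[\bm{x}_k]_g}<\alpha_k\bigl(\lambda-\norm{[\Grad f_{\B_k}(\bm{x}_k)]_g}\bigr)$.

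The remaining step transfers information from $\bm{x}^*$ to $\bm{x}_k$. Using Assumption~\ref{assumption} (Lipschitz continuity of $\Grad f_{\B_k}$ with constant $L$) together with the strict complementarity assumption $\lambda-\norm{[\Grad f_{\B_k}(\bm{x}^*)]_g}\geq 2\delta_3$, I would bound $\norm{[\Grad f_{\B_k}(\bm{x}_k)]_g}\leq\norm{[\Grad f_{\B_k}(\bm{x}^*)]_g}+L\norm{\bm{x}_k-\bm{x}^*}\leq\lambda-2\delta_3+L\norm{\bm{x}_k-\bm{x}^*}$, hence $\lambda-\norm{[\Grad f_{\B_k}(\bm{x}_k)]_g}\geq 2\delta_3-L\norm{\bm{x}_k-\bm{x}^*}$. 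On the left, since $[\bm{x}^*]_g=0$ we have $\norm{[\bm{x}_k]_g}=\norm{[\bm{x}_k-\bm{x}^*]_g}\leq\norm{\bm{x}_k-\bm{x}^*}$. Writing $r:=\norm{\bm{x}_k-\bm{x}^*}$, the desired inequality is implied by $(1-\epsilon)r<\alpha_k(2\delta_3-Lr)$, which rearranges to $r(1-\epsilon+\alpha_kL)<2\alpha_k\delta_3$, i.e. precisely the hypothesis $r\leq\frac{2\alpha_k\delta_3}{1-\epsilon+\alpha_kL}$. Collecting the chain yields $[\bm{x}_{k+1}]_g=0$ for every $g\in\I^0(\bm{x}^*)$, that is, $\I^0(\bm{x}^*)\subseteq\I^0(\bm{x}_{k+1})$.

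I expect the main obstacle to be the bookkeeping of strict versus non-strict inequalities at the boundary $r=\frac{2\alpha_k\delta_3}{1-\epsilon+\alpha_kL}$: the projection test requires a \emph{strict} inequality, whereas the hypothesis admits equality, so one must argue that at least one intermediate bound (the Cauchy--Schwarz step, the contraction $\norm{[\bm{x}_k]_g}\leq\norm{\bm{x}_k-\bm{x}^*}$, or the strict complementarity margin $2\delta_3>0$) is strict to close the gap. The secondary subtlety is that Lipschitz continuity must be invoked only for the smooth part $\Grad f_{\B_k}$, since $\Grad\Psi_{\B_k}$ fails to be globally Lipschitz near zero groups; isolating the regularizer contribution \emph{exactly}, rather than bounding it, is what keeps the argument legitimate.
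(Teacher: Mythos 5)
Your proposal is correct and follows essentially the same route as the paper's proof: the same case split on $g\in\I^{0}(\bm{x}_k)$ versus $g\in\I^{\neq 0}(\bm{x}_k)$, the same exact isolation of the regularizer term $\lambda[\bm{x}_k]_g/\norm{[\bm{x}_k]_g}$, the same Cauchy--Schwarz bound, and the same combination of Lipschitz continuity of $\Grad f_{\B_k}$ with the strict complementarity margin $2\delta_3$ to reach $(1-\epsilon+\alpha_kL)\norm{[\bm{x}_k]_g}\leq 2\alpha_k\delta_3$. The strict-versus-nonstrict boundary issue you flag is real but is present in the paper's own proof as well (it derives $\leq$ and invokes the strict test of line~\ref{line:half_space_set_new_iterate_start}), so your awareness of it is, if anything, a point in your favor.
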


\textbf{Remark:} Theorem~\ref{thm:sparsity_recovery_rate_hbproxsg} shows that when $\bm{x}_k$ is in the $\ell_2$-ball centered at $\bm{x}^*$ with radius $\frac{2\alpha_k\delta_3}{1-\epsilon+\alpha_kL}$,~\algacro{} identifies the optimal sparsity pattern, \ie, $\I^0(\bm{x}^*)\subseteq \I^0(\bm{x}_{k+1})$. In contrast, to identify the sparsity pattern,~\proxsg{} requires the iterates to fall into the $\ell_2$-ball centered at $\bm{x}^*$ with radius $\alpha_k \delta_3$~\citep{nutini2019active}. Since $\alpha_k\leq 1/L$ and $\epsilon\in [0,1)$, then $\frac{2\alpha_k\delta_3}{1-\epsilon+\alpha_kL}\geq\alpha_k\delta_3$ implies that the $\ell_2$-ball of~\algacro{} contains the $\ell_2$-ball of~\proxsg{}, \ie,~\algacro{} has a stronger performance in sparsity pattern identification.  Therefore, Theorem~\ref{thm:sparsity_recovery_rate_hbproxsg} reveals a better sparsity identification property of~\algacro{} than~\proxsg{}, and no similar results exist for other methods to our knowledge.

\noindent
\textbf{The Initialization Stage Selection:} To satisfy the pre-requirement of convergence of Half-Space Step as Theorem~\ref{thm:convergence}, \ie, initial iterate close enough to $\bm{x}^*$,
there exists several proper candidates \eg,~\proxsg{},~\proxsvrg{} and~\saga{}  to form as the Initialization Stage. 
Considering the tradeoff between computational efficiency and theoretical convergence, our default setting is to select~\proxsg{}. Although ~\proxsvrg/SAGA may have better theoretical convergence property than~\proxsg{}, they require higher time and space complexity to compute or estimate full gradient on a huge mini-batch or store previous gradient, which may be prohibitive for large-scale training especially when the memory is often limited. Besides, it is well noticed that SVRG does not work as desired on the popular non-convex deep learning applications~\citep{defazio2019ineffectiveness, chen2020orthant}. In contrast, Prox-SG is efficient and can also achieves the good initialization assumption in Theorem~\ref{thm:convergence}, \ie, $\norm{\bm{x}_{N_\P}-\bm{x}^*}\leq R/2$, in the manner of high probability via performing sufficiently many times, as revealed in Appendix~\ref{appendx:upper_bound_n_p} by leveraging related literature~\citep{rosasco2019convergence} associated with an additional strongly convex assumption. However, one should notice that Prox-SG does not guarantee any group sparsity property of $\bm{x}_{N_\mathcal{P}}$ due to the limited projection region and randomness.

\textbf{Remark}: We emphasize that this paper focuses on improving the group sparsity identification, which is rarely explored and also a key indicator of success for structured sparsity regularization problem. Meanwhile, we would like to point out improving the convergence rate has been very well explored in a series of literatures~\citep{reddi2016proximal,li2018simple}, but out of our main consideration.

\section{Numerical Experiments}
\label{sec:exp}

In this section, we present results of several benchmark numerical experiments in deep neural networks to illustrate the superiority of~\algacro{} than other related algorithms on group sparsity exploration and the comparable convergence. Besides, two extensible convex experiments are conducted in Appendix to empirically demonstrate the validness and superiority of the group sparsity identification by~\algacro{}.

\vspace{-0.1in}
\paragraph{Image Classification:} 

We now consider the popular Deep Convolutional Neural Networks (DCNNs) for image classification tasks. 
Specifically, we select several popular and benchmark DCNN architectures, \ie,  \vgg{}~\citep{simonyan2014very}, \resnet{}~\citep{he2016deep} and~\mobilenet{}~\citep{howard2017mobilenets} on two benchmark datasets CIFAR10~\citep{Krizhevsky09} and~\fashionmnist{}~\citep{xiao2017online}.
We conduct all experiments 
for 300 epochs with a mini-batch size of 128 and $\lambda$ as $10^{-3}$, since it returns competitive testing accuracy to the models trained without regularization, (see more in Appendix~\ref{appendix:nonconvex_exp}). 
The step size $\alpha_k$ is initialized as $0.1$, and decayed by a factor 0.1 periodically. 
We set each filter in the convolution layers as a group variable.

In these experiments, we proceed a test on the objective value stationarity similarly to~\citep[Section 2.1]{zhang2020statistical} and switch to Half-Space Step roughly on 150 epochs with $N_\mathcal{P}$ as $150N/|\mathcal{B}|$. The control parameter $\epsilon$ in the half-space projection~\eqref{def:proj} controls the aggressiveness level of group sparsity promotion, which is first set as 0, then fined tuned to be around $0.02$ to favor the sparsity level whereas does not hurt the target objective $\Psi$; the detailed procedure is in Appendix. We exclude~\rda{} because of no acceptable experimental results attained during our tests with the step size parameter $\gamma$ setting throughout all powers of 10 from $10^{-3}$ to $10^3$, and skip Prox-Spider and SAGA since Prox-SVRG has been a superb representative
to the proximal incremental gradient methods. 

Table~\ref{table:nonconvex} demonstrates the effectiveness and superiority of~\algacro{}, where we mark the best values as bold, and the group sparsity ratio is defined as the percentage of zero groups. In particular, \textit{(i)} \algacro{} computes remarkably higher group sparsity than other methods on all tests under both $\epsilon=0$ and fine tuned $\epsilon$, of which the solutions are typically multiple times sparser in the manner of group than those of~\proxsg{}, while~\proxsvrg{} performs not comparably since the variance reduction techniques may not work as desired for deep learning applications~\citep{defazio2019ineffectiveness}; \textit{(ii)} \algacro{} performs competitively with respect to the final objective values $\Psi$ and $f$ (see $f$ in Appendix). In addition, all the methods reach a comparable generalization performance on unseen test data. On the other hand, sparse regularization methods may yield solutions with entries that are not exactly zero but are very small. Sometimes all entries below certain threshold ($\mathcal{T}$) are set to zero~\citep{jenatton2010structured,el2018combinatorial}. However, such simple truncation mechanism is heuristic-rule based, hence may hurt convergence and accuracy. To illustrate this, we set the groups of the solutions of~\proxsg{} and~\proxsvrg{} to zero if the magnitudes of the group variables are less than some $\mathcal{T}$, and denote the corresponding solutions as \proxsg{}* and~\proxsvrg{}*.
As shown in Figure~\ref{figure:simple_truncation}\textcolor{red}{(i)}, under the $\mathcal{T}$ with no accuracy regression,~\proxsg{}* and~\proxsvrg{}* reach higher group sparsity ratio as 60\% and 32\% compared to Table~\ref{table:nonconvex}, but still significantly lower than the 70\% of HSPG under $\epsilon=0.05$ without simple truncation. Under the $\mathcal{T}$ to reach the same group sparsity ratio as HSPG, the testing accuracy of~\proxsg{}* and~\proxsvrg{}* regresses drastically to 28\% and 17\% in Figure~\ref{figure:simple_truncation}\textcolor{red}{(ii)} respectively. Remark here that although further refitting the models from \proxsg{}* and~\proxsvrg{}* on active (non-zero) groups of weights may recover the accuracy regression, it requires additional engineering efforts and training cost, which is less attractive and convenient than~\algacro{} (with no need to refit).

\begin{table}[t]
	\centering
	\caption{Final $\Psi$/group sparsity ratio/testing accuracy for tested algorithms on non-convex problems.}
	\label{table:nonconvex}
	\resizebox{\textwidth}{!}{
		\begin{tabular}{ 
				cccccc}
			\Xhline{3\arrayrulewidth}
			\multirow{2}{*}{Backbone} & \multirow{2}{*}{Dataset} &\multirow{2}{*}{\proxsg{}} &  \multirow{2}{*}{\proxsvrg{}}  & \multicolumn{2}{c}{\algacro{}}\\
			\cline{5-6}
			& & &   & $\epsilon$ as $0$ & fine tuned $\epsilon$ \\
			\hline
			\multirow{2}{*}{\vgg{}} & \cifar{} & \textbf{0.59}\ /\ 53.95\%\ /\ 90.57\%  & 0.82\ /\ 14.73\%\ /\ 89.42\% &  \textbf{0.59}\ /\ 74.60\%\ /\ \textbf{91.10\%} & \textbf{0.59}\ /\ \textbf{75.61\%} \ /\ 90.92\%  \\
			& \fashionmnist{} & 0.54\ /\ 15.63\%\ /\ \textbf{92.99\%}  & 2.66\ /\ 0.45\%\ /\ 92.69\%  & 0.54\ /\ 22.18\%\ /\ 92.98\% &  \textbf{0.53}\ /\ \textbf{60.77}\%\ /\ 92.87\% \\\hdashline
			\multirow{2}{*}{\resnet{}} & \cifar{} & \textbf{0.31}\ /\ 19.50\%\ /\ 94.09\%  & 0.36\ /\ 2.79\%\ /\ 94.17\%  &  \textbf{0.31}\ /\ 41.58\%\ /\ 94.39\% & \textbf{0.31}\ /\ \textbf{62.97\%}\ /\ \textbf{94.53\%} \\
			& \fashionmnist{} & 0.14\ /\ 0.00\%\ /\ 94.82\% & 0.19\ /\ 0.00\%\ /\ 94.64\% & \textbf{0.13}\ /\ 6.60\%\ /\ \textbf{94.93\%} & \textbf{0.13}\ /\ \textbf{63.93\%}\ /\ 94.86\%\\
			\hdashline
			\multirow{2}{*}{\mobilenet{}} & \cifar{} & \textbf{0.40}\ /\ 57.81\% \ /\ 91.60\% & 0.65\ /\ 32.22\% \ /\ 90.08\% & \textbf{0.40}\ /\ 65.04\% \ /\ \textbf{91.86\%} & 0.41\ /\ \textbf{71.66\%} \ /\ 91.54\% \\
			& \fashionmnist{} & \textbf{0.22}\ /\ 65.80\% \ /\ 94.36\%  & 0.48\ /\ 38.76\%  \ /\ 93.95\% & 0.23\ /\ 74.52\% \ /\ 94.43\% & 0.24\ /\ \textbf{83.71\%}\ /\ \textbf{94.44\%} \\
			\Xhline{3\arrayrulewidth} 
	\end{tabular}}
	
	\vspace{-0.1in}
\end{table} 

\vspace{-0.1in}
\begin{figure}[t!]
	\centering
	\begin{subfigure}[t]{0.24\textwidth}
		\includegraphics[width=\linewidth]{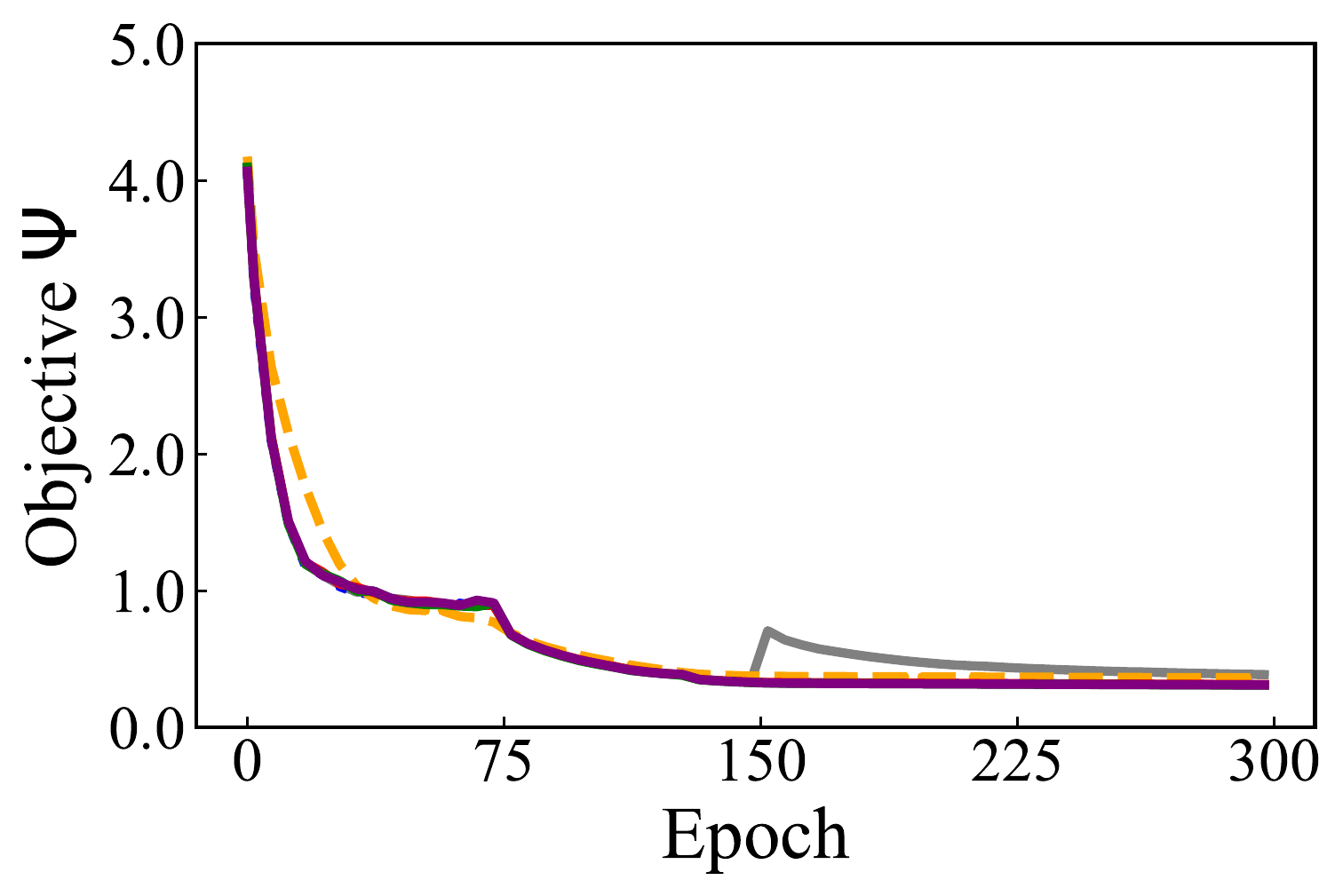}
		\vspace{-0.2in}
		\caption{\scriptsize Objective $\Psi$}
		\vspace{-0.1in}
	\end{subfigure}
	\begin{subfigure}[t]{0.24\textwidth}
		\includegraphics[width=\linewidth]{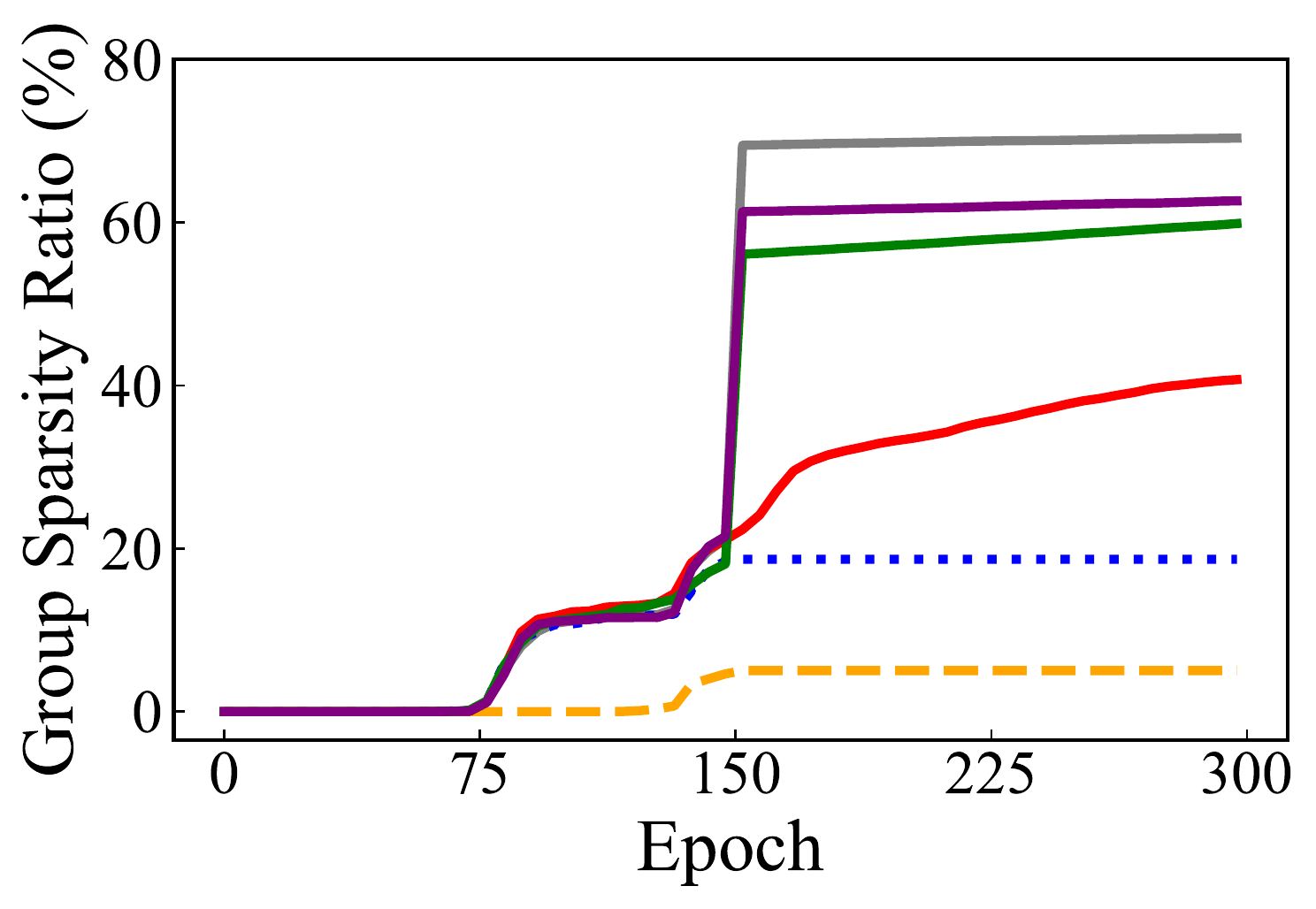}
		\vspace{-0.2in}
		\caption{\scriptsize  Group Sparsity Ratio}
		\label{figure:group_sparsity}
		\vspace{-0.1in}
	\end{subfigure}
	\begin{subfigure}[t]{0.24\textwidth}
		\includegraphics[width=\linewidth]{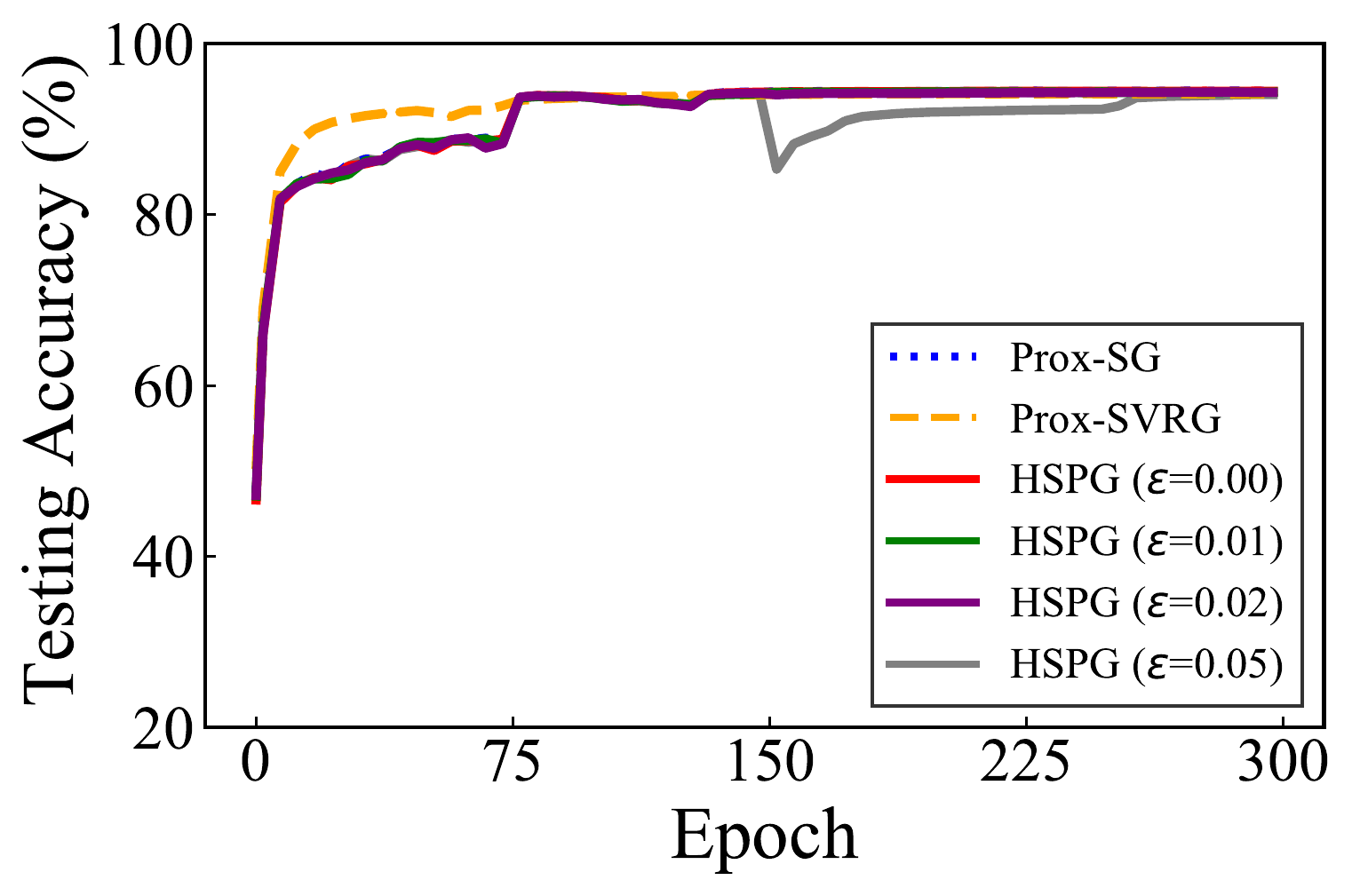}
		\vspace{-0.2in}
		\caption{\scriptsize  Testing Accuracy}
		\vspace{-0.1in}
	\end{subfigure}
	\begin{subfigure}[t]{0.24\textwidth}
		\includegraphics[width=\linewidth]{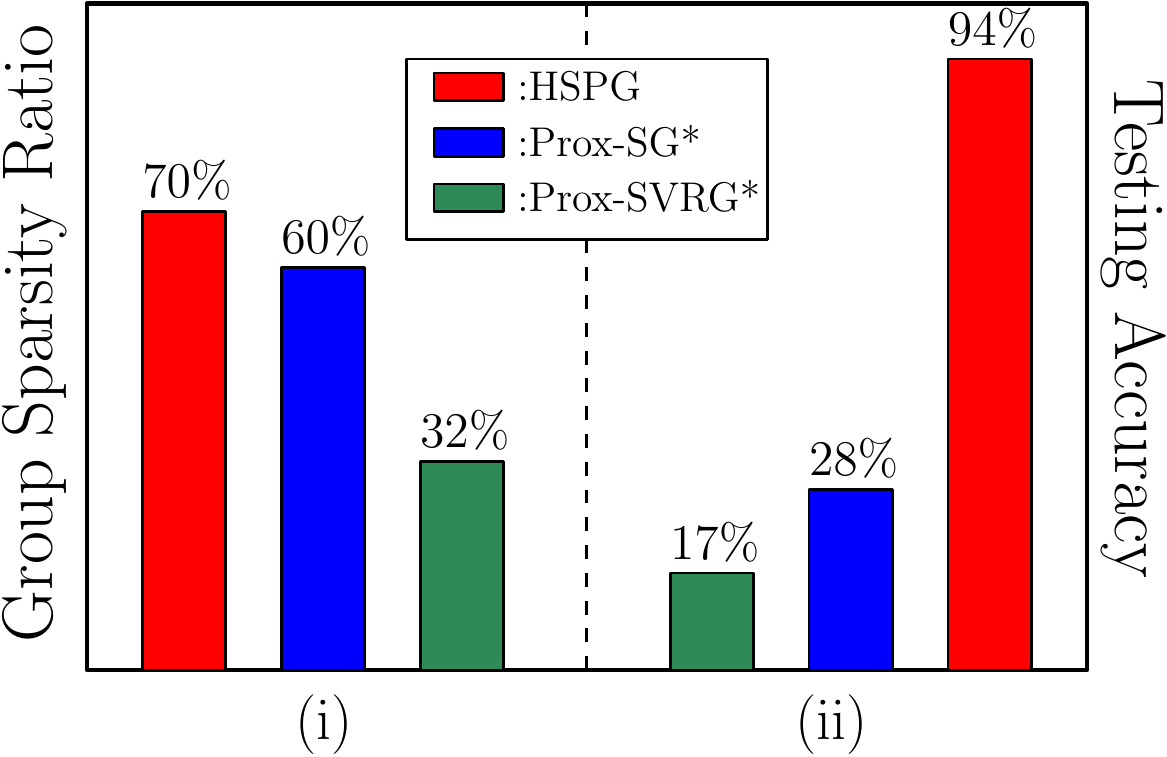}
		\vspace{-0.2in}
		\caption{{\scriptsize HSPG VS  Truncation}}
		\label{figure:simple_truncation}
		\vspace{-0.1in}
	\end{subfigure}
	\caption{\small On~\resnet{} with~\cifar{}, (a)-(c): Evolution of $\Psi$, group sparsity ratio and testing accuracy, (d):~\algacro{} versus~\proxsg{}* and~\proxsvrg{}* (\proxsg{} and \proxsvrg{} with simple truncation mechanism).}
	\vspace{-0.2in}
\end{figure}

\vspace{0.1in}

Finally, we investigate the group sparsity evolution under different $\epsilon$'s. As shown in Figure~\ref{figure:group_sparsity}, \algacro{} produces the highest group-sparse solutions compared with other methods. Notably, at the early $N_{\mathcal{P}}$ iterations, ~\algacro{} performs merely the same as~\proxsg{}. However, after switching to \halfspacestep{} at the 150th epoch, \algacro{} outperforms all the other methods dramatically, and larger $\epsilon$ results in higher sparsity level. It is a strong evidence that our half-space based technique is much more successful than the proximal mechanism and its variants in terms of the group sparsity identification. Besides, the evolutions of $\Psi$ and testing accuracy confirm the comparability on convergence among the tested algorithms. Particularly, the objective $\Psi$ generally monotonically decreases for small $\epsilon=0$ to $0.02$, and experiences a mild pulse after switch to~\halfspacestep{} for larger $\epsilon$, \eg, 0.05, which matches Lemma~\ref{lemma:sufficient_decrease_half_space}. As a result, with the similar generalization accuracy, \algacro{} allows dropping entire hidden units of networks, which may further achieve automatic dimension reduction and construct smaller model architectures for efficient inference.  

\section{Conclusions and Future Work}

\vspace{-0.05in}
We proposed a new~\algname{} (\algacro) method for disjoint group-sparsity induced regularized problem, which can be applied to various structured sparsity stochastic learning problem.
\algacro{} makes use of proximal stochastic gradient method to seek a near-optimal solution estimate, followed by a novel half-space group projection to effectively exploit the group sparsity structure. In theory, we provided the convergence guarantee, and showed its better sparsity identification performance. Experiments on both convex and non-convex problems demonstrated that~\algacro{} usually achieves solutions with competitive objective values and significantly higher group sparsity compared with state-of-the-arts stochastic solvers. Further study is needed to investigate the proper leverage of group sparsity into diverse deep learning applications, \eg, help people design and understand optimal network architecture by removing redundant hidden structures.

\bibliography{iclr2021_conference}

\begin{thebibliography}{51}
\providecommand{\natexlab}[1]{#1}
\providecommand{\url}[1]{\texttt{#1}}
\expandafter\ifx\csname urlstyle\endcsname\relax
  \providecommand{\doi}[1]{doi: #1}\else
  \providecommand{\doi}{doi: \begingroup \urlstyle{rm}\Url}\fi

\bibitem[Bach et~al.(2012)Bach, Jenatton, Mairal, Obozinski,
  et~al.]{bach2012structured}
Francis Bach, Rodolphe Jenatton, Julien Mairal, Guillaume Obozinski, et~al.
\newblock Structured sparsity through convex optimization.
\newblock \emph{Statistical Science}, 27\penalty0 (4):\penalty0 450--468, 2012.

\bibitem[Chang \& Lin(2011)Chang and Lin]{chang2011libsvm}
Chih-Chung Chang and Chih-Jen Lin.
\newblock Libsvm: Data repository.
\newblock 2011.
\newblock URL \url{https://www.csie.ntu.edu.tw/~cjlin/libsvmtools/datasets/}.

\bibitem[Chen \& Selesnick(2014)Chen and Selesnick]{chen2014group}
Po-Yu Chen and Ivan~W Selesnick.
\newblock Group-sparse signal denoising: non-convex regularization, convex
  optimization.
\newblock \emph{IEEE Transactions on Signal Processing}, 62\penalty0
  (13):\penalty0 3464--3478, 2014.

\bibitem[Chen(2018)]{chen2018fast}
Tianyi Chen.
\newblock \emph{A Fast Reduced-Space Algorithmic Framework for Sparse
  Optimization}.
\newblock PhD thesis, Johns Hopkins University, 2018.

\bibitem[Chen et~al.(2017)Chen, Curtis, and Robinson]{chen2017reduced}
Tianyi Chen, Frank~E Curtis, and Daniel~P Robinson.
\newblock A reduced-space algorithm for minimizing $\ell_1$-regularized convex
  functions.
\newblock \emph{SIAM Journal on Optimization}, 27\penalty0 (3):\penalty0
  1583--1610, 2017.

\bibitem[Chen et~al.(2018)Chen, Curtis, and Robinson]{chen2018farsa}
Tianyi Chen, Frank~E Curtis, and Daniel~P Robinson.
\newblock Farsa for $\ell_1$-regularized convex optimization: local convergence
  and numerical experience.
\newblock \emph{Optimization Methods and Software}, 2018.

\bibitem[Chen et~al.(2020)Chen, Ding, Ji, Wang, Shi, Yi, Tu, and
  Zhu]{chen2020orthant}
Tianyi Chen, Tianyu Ding, Bo~Ji, Guanyi Wang, Yixin Shi, Sheng Yi, Xiao Tu, and
  Zhihui Zhu.
\newblock Orthant based proximal stochastic gradient method for
  $\ell_1$-regularized optimization.
\newblock \emph{arXiv preprint arXiv:2004.03639}, 2020.

\bibitem[Defazio \& Bottou(2019)Defazio and Bottou]{defazio2019ineffectiveness}
Aaron Defazio and L{\'e}on Bottou.
\newblock On the ineffectiveness of variance reduced optimization for deep
  learning.
\newblock In \emph{Advances in Neural Information Processing Systems}, 2019.

\bibitem[Defazio et~al.(2014)Defazio, Bach, and
  Lacoste-Julien]{defazio2014saga}
Aaron Defazio, Francis Bach, and Simon Lacoste-Julien.
\newblock Saga: A fast incremental gradient method with support for
  non-strongly convex composite objectives.
\newblock In \emph{Advances in neural information processing systems}, pp.\
  1646--1654, 2014.

\bibitem[Drusvyatskiy \& Lewis(2018)Drusvyatskiy and
  Lewis]{drusvyatskiy2018error}
Dmitriy Drusvyatskiy and Adrian~S Lewis.
\newblock Error bounds, quadratic growth, and linear convergence of proximal
  methods.
\newblock \emph{Mathematics of Operations Research}, 43\penalty0 (3):\penalty0
  919--948, 2018.

\bibitem[Duchi \& Singer(2009)Duchi and Singer]{duchi2009efficient}
John Duchi and Yoram Singer.
\newblock Efficient online and batch learning using forward backward splitting.
\newblock \emph{Journal of Machine Learning Research}, 10\penalty0
  (Dec):\penalty0 2899--2934, 2009.

\bibitem[Elhamifar et~al.(2012)Elhamifar, Sapiro, and Vidal]{elhamifar2012see}
Ehsan Elhamifar, Guillermo Sapiro, and Rene Vidal.
\newblock See all by looking at a few: Sparse modeling for finding
  representative objects.
\newblock In \emph{2012 IEEE Conference on Computer Vision and Pattern
  Recognition}, pp.\  1600--1607. IEEE, 2012.

\bibitem[Fang et~al.(2018)Fang, Li, Lin, and Zhang]{fang2018spider}
Cong Fang, Chris~Junchi Li, Zhouchen Lin, and Tong Zhang.
\newblock Spider: Near-optimal non-convex optimization via stochastic
  path-integrated differential estimator.
\newblock In \emph{Advances in Neural Information Processing Systems}, pp.\
  689--699, 2018.

\bibitem[Ge et~al.(2015)Ge, Huang, Jin, and Yuan]{ge2015escaping}
Rong Ge, Furong Huang, Chi Jin, and Yang Yuan.
\newblock Escaping from saddle points—online stochastic gradient for tensor
  decomposition.
\newblock In \emph{Conference on Learning Theory}, pp.\  797--842, 2015.

\bibitem[Gower(2018)]{robert2018convergegd}
Robert~M. Gower.
\newblock Convergence theorems for gradient descent.
\newblock \emph{University of Illinois, Urbana Champaign}, 2018.

\bibitem[Halabi et~al.(2018)Halabi, Bach, and Cevher]{el2018combinatorial}
Marwa~El Halabi, Francis Bach, and Volkan Cevher.
\newblock Combinatorial penalties: Which structures are preserved by convex
  relaxations?
\newblock In \emph{International Conference on Artificial Intelligence and
  Statistics}, pp.\  1551--1560. PMLR, 2018.

\bibitem[He et~al.(2016)He, Zhang, Ren, and Sun]{he2016deep}
Kaiming He, Xiangyu Zhang, Shaoqing Ren, and Jian Sun.
\newblock Deep residual learning for image recognition.
\newblock In \emph{Proceedings of the IEEE conference on computer vision and
  pattern recognition}, 2016.

\bibitem[Howard et~al.(2017)Howard, Zhu, Chen, Kalenichenko, Wang, Weyand,
  Andreetto, and Adam]{howard2017mobilenets}
Andrew~G Howard, Menglong Zhu, Bo~Chen, Dmitry Kalenichenko, Weijun Wang,
  Tobias Weyand, Marco Andreetto, and Hartwig Adam.
\newblock Mobilenets: Efficient convolutional neural networks for mobile vision
  applications.
\newblock \emph{arXiv preprint arXiv:1704.04861}, 2017.

\bibitem[Huang et~al.(2009)Huang, Huang, and Metaxas]{huang2009learning}
Junzhou Huang, Xiaolei Huang, and Dimitris Metaxas.
\newblock Learning with dynamic group sparsity.
\newblock In \emph{2009 IEEE 12th International Conference on Computer Vision},
  pp.\  64--71. IEEE, 2009.

\bibitem[Huang et~al.(2010)Huang, Zhang, et~al.]{huang2010benefit}
Junzhou Huang, Tong Zhang, et~al.
\newblock The benefit of group sparsity.
\newblock \emph{The Annals of Statistics}, 38\penalty0 (4):\penalty0
  1978--2004, 2010.

\bibitem[Huang et~al.(2011)Huang, Zhang, and Metaxas]{huang2011learning}
Junzhou Huang, Tong Zhang, and Dimitris Metaxas.
\newblock Learning with structured sparsity.
\newblock \emph{Journal of Machine Learning Research}, 12\penalty0
  (Nov):\penalty0 3371--3412, 2011.

\bibitem[Jenatton et~al.(2010)Jenatton, Obozinski, and
  Bach]{jenatton2010structured}
Rodolphe Jenatton, Guillaume Obozinski, and Francis Bach.
\newblock Structured sparse principal component analysis.
\newblock In \emph{Proceedings of the Thirteenth International Conference on
  Artificial Intelligence and Statistics}, pp.\  366--373, 2010.

\bibitem[Johnson \& Zhang(2013)Johnson and Zhang]{johnson2013accelerating}
Rie Johnson and Tong Zhang.
\newblock Accelerating stochastic gradient descent using predictive variance
  reduction.
\newblock In \emph{Advances in neural information processing systems}, pp.\
  315--323, 2013.

\bibitem[Karimi et~al.(2016)Karimi, Nutini, and Schmidt]{karimi2016linear}
Hamed Karimi, Julie Nutini, and Mark Schmidt.
\newblock Linear convergence of gradient and proximal-gradient methods under
  the polyak-{\l}ojasiewicz condition.
\newblock In \emph{Joint European Conference on Machine Learning and Knowledge
  Discovery in Databases}. Springer, 2016.

\bibitem[Krizhevsky \& Hinton(2009)Krizhevsky and Hinton]{Krizhevsky09}
A.~Krizhevsky and G.~Hinton.
\newblock Learning multiple layers of features from tiny images.
\newblock \emph{Master's thesis, Department of Computer Science, University of
  Toronto}, 2009.

\bibitem[Lee \& Wright(2012)Lee and Wright]{lee2012manifold}
Sangkyun Lee and Stephen~J Wright.
\newblock Manifold identification in dual averaging for regularized stochastic
  online learning.
\newblock \emph{The Journal of Machine Learning Research}, 13\penalty0
  (1):\penalty0 1705--1744, 2012.

\bibitem[Li et~al.(2016)Li, Kadav, Durdanovic, Samet, and Graf]{li2016pruning}
Hao Li, Asim Kadav, Igor Durdanovic, Hanan Samet, and Hans~Peter Graf.
\newblock Pruning filters for efficient convnets.
\newblock \emph{arXiv preprint arXiv:1608.08710}, 2016.

\bibitem[Li \& Li(2018)Li and Li]{li2018simple}
Zhize Li and Jian Li.
\newblock A simple proximal stochastic gradient method for nonsmooth nonconvex
  optimization.
\newblock In \emph{Advances in neural information processing systems}, pp.\
  5564--5574, 2018.

\bibitem[Liu et~al.(2018)Liu, Cao, Liu, Zhou, Zhang, and Li]{liu2018mri}
Shujun Liu, Jianxin Cao, Hongqing Liu, Xichuan Zhou, Kui Zhang, and Zhengzhou
  Li.
\newblock Mri reconstruction via enhanced group sparsity and nonconvex
  regularization.
\newblock \emph{Neurocomputing}, 272:\penalty0 108--121, 2018.

\bibitem[Luo et~al.(2017)Luo, Wu, and Lin]{luo2017thinet}
Jian-Hao Luo, Jianxin Wu, and Weiyao Lin.
\newblock Thinet: A filter level pruning method for deep neural network
  compression.
\newblock In \emph{Proceedings of the IEEE international conference on computer
  vision}, pp.\  5058--5066, 2017.

\bibitem[Mitzenmacher(2005)]{mitzenmacher2005probability}
Michael Mitzenmacher.
\newblock Probability and computing-randomized algorithms and probabilistic
  analysis.
\newblock \emph{JOURNAL-OPERATIONAL RESEARCH SOCIETY}, 56\penalty0
  (12):\penalty0 1454, 2005.

\bibitem[Ndiaye et~al.(2017)Ndiaye, Fercoq, Gramfort, and
  Salmon]{ndiaye2017gap}
Eugene Ndiaye, Olivier Fercoq, Alexandre Gramfort, and Joseph Salmon.
\newblock Gap safe screening rules for sparsity enforcing penalties.
\newblock \emph{The Journal of Machine Learning Research}, 18\penalty0
  (1):\penalty0 4671--4703, 2017.

\bibitem[Nesterov(2009)]{nesterov2009primal}
Yurii Nesterov.
\newblock Primal-dual subgradient methods for convex problems.
\newblock \emph{Mathematical programming}, 2009.

\bibitem[Nocedal \& Wright(2006)Nocedal and Wright]{nocedal2006numerical}
Jorge Nocedal and Stephen Wright.
\newblock \emph{Numerical optimization}.
\newblock Springer Science \& Business Media, 2006.

\bibitem[Nutini et~al.(2019)Nutini, Schmidt, and Hare]{nutini2019active}
Julie Nutini, Mark Schmidt, and Warren Hare.
\newblock “active-set complexity” of proximal gradient: How long does it
  take to find the sparsity pattern?
\newblock \emph{Optimization Letters}, 13\penalty0 (4):\penalty0 645--655,
  2019.

\bibitem[Reddi et~al.(2016)Reddi, Sra, Poczos, and Smola]{reddi2016proximal}
Sashank~J Reddi, Suvrit Sra, Barnabas Poczos, and Alexander~J Smola.
\newblock Proximal stochastic methods for nonsmooth nonconvex finite-sum
  optimization.
\newblock In \emph{Advances in Neural Information Processing Systems}, pp.\
  1145--1153, 2016.

\bibitem[Rosasco et~al.(2019)Rosasco, Villa, and Vu]{rosasco2019convergence}
Lorenzo Rosasco, Silvia Villa, and Bang~Cng Vu.
\newblock Convergence of stochastic proximal gradient algorithm.
\newblock \emph{Applied Mathematics \& Optimization}, pp.\  1--27, 2019.

\bibitem[Roth \& Fischer(2008)Roth and Fischer]{roth2008group}
Volker Roth and Bernd Fischer.
\newblock The group-lasso for generalized linear models: uniqueness of
  solutions and efficient algorithms.
\newblock In \emph{Proceedings of the 25th international conference on Machine
  learning}, pp.\  848--855, 2008.

\bibitem[Roux et~al.(2012)Roux, Schmidt, and Bach]{roux2012stochastic}
Nicolas~L Roux, Mark Schmidt, and Francis~R Bach.
\newblock A stochastic gradient method with an exponential convergence \_rate
  for finite training sets.
\newblock In \emph{Advances in neural information processing systems}, pp.\
  2663--2671, 2012.

\bibitem[Scardapane et~al.(2017)Scardapane, Comminiello, Hussain, and
  Uncini]{scardapane2017group}
Simone Scardapane, Danilo Comminiello, Amir Hussain, and Aurelio Uncini.
\newblock Group sparse regularization for deep neural networks.
\newblock \emph{Neurocomputing}, 241:\penalty0 81--89, 2017.

\bibitem[Simonyan \& Zisserman(2014)Simonyan and Zisserman]{simonyan2014very}
Karen Simonyan and Andrew Zisserman.
\newblock Very deep convolutional networks for large-scale image recognition.
\newblock \emph{arXiv preprint arXiv:1409.1556}, 2014.

\bibitem[Xiao et~al.(2017)Xiao, Rasul, and Vollgraf]{xiao2017online}
Han Xiao, Kashif Rasul, and Roland Vollgraf.
\newblock Fashion-mnist: a novel image dataset for benchmarking machine
  learning algorithms, 2017.

\bibitem[Xiao(2010)]{xiao2010dual}
Lin Xiao.
\newblock Dual averaging methods for regularized stochastic learning and online
  optimization.
\newblock \emph{Journal of Machine Learning Research}, 11\penalty0
  (Oct):\penalty0 2543--2596, 2010.

\bibitem[Xiao \& Zhang(2014)Xiao and Zhang]{xiao2014proximal}
Lin Xiao and Tong Zhang.
\newblock A proximal stochastic gradient method with progressive variance
  reduction.
\newblock \emph{SIAM Journal on Optimization}, 24\penalty0 (4):\penalty0
  2057--2075, 2014.

\bibitem[Yang et~al.(2010)Yang, Xu, King, and Lyu]{yang2010online}
Haiqin Yang, Zenglin Xu, Irwin King, and Michael~R Lyu.
\newblock Online learning for group lasso.
\newblock In \emph{Proceedings of the 27th International Conference on Machine
  Learning (ICML-10)}, pp.\  1191--1198, 2010.

\bibitem[Yang et~al.(2019)Yang, Milzarek, Wen, and Zhang]{yang2019stochastic}
Minghan Yang, Andre Milzarek, Zaiwen Wen, and Tong Zhang.
\newblock A stochastic extra-step quasi-newton method for nonsmooth nonconvex
  optimization.
\newblock \emph{arXiv preprint arXiv:1910.09373}, 2019.

\bibitem[Yuan \& Lin(2006)Yuan and Lin]{yuan2006model}
Ming Yuan and Yi~Lin.
\newblock Model selection and estimation in regression with grouped variables.
\newblock \emph{Journal of the Royal Statistical Society: Series B (Statistical
  Methodology)}, 68\penalty0 (1):\penalty0 49--67, 2006.

\bibitem[Zaheer et~al.(2018)Zaheer, Reddi, Sachan, Kale, and
  Kumar]{zaheer2018adaptive}
Manzil Zaheer, Sashank Reddi, Devendra Sachan, Satyen Kale, and Sanjiv Kumar.
\newblock Adaptive methods for nonconvex optimization.
\newblock In \emph{Advances in neural information processing systems}, pp.\
  9793--9803, 2018.

\bibitem[Zhang \& Xiao(2019)Zhang and Xiao]{zhang2019multi}
Junyu Zhang and Lin Xiao.
\newblock Multi-level composite stochastic optimization via nested variance
  reduction.
\newblock \emph{arXiv preprint arXiv:1908.11468}, 2019.

\bibitem[Zhang et~al.(2020)Zhang, Lang, Liu, and Xiao]{zhang2020statistical}
Pengchuan Zhang, Hunter Lang, Qiang Liu, and Lin Xiao.
\newblock Statistical adaptive stochastic gradient methods.
\newblock \emph{arXiv preprint arXiv:2002.10597}, 2020.

\bibitem[Zhong et~al.(2017)Zhong, Song, Jain, Bartlett, and
  Dhillon]{zhong2017recovery}
Kai Zhong, Zhao Song, Prateek Jain, Peter~L Bartlett, and Inderjit~S Dhillon.
\newblock Recovery guarantees for one-hidden-layer neural networks.
\newblock In \emph{International Conference on Machine Learning}, 2017.

\end{thebibliography}
\bibliographystyle{iclr2021_conference}

\appendix

\section{Projection Region}\label{appendix:projection_region}

In this Appendix, we derive the projection region of~\algacro{}, and reveal that is a superset of those of~\proxsg{},~\proxsvrg{} and~\proxspider{} under the same $\alpha_k$ and $\lambda$.
\begin{proposition}
The~\halfspacestep{} of~\algacro{} yields next iterate $x_{k+1}$ based on the trial iterate $\hat{x}_{k+1}=x_k-\alpha_k\Grad f_{\mathcal{B}_k}(x_k)$ as follows for each $g\in\mathcal{I}^{\neq 0}(x_k)$
\begin{equation}
[x_{k+1}]_g=
\begin{cases}
[\hat{x}_{k+1}]_g-\alpha_k\lambda \frac{[x_k]_g}{\norm{[x_k]_g}}& \text{if}\ [\hat{x}_{k+1}]_g^\top[x_k]_g> (\alpha_k\lambda + \epsilon)\norm{[x_k]_g} \\
0 & \text{otherwise}.
\end{cases}
\end{equation}
Consequently, if $\norm{[\hat{x}_{k+1}]_g}\leq \alpha_k\lambda$,  then $[x_{k+1}]_g=0$ for any $\epsilon\geq 0$. 
\end{proposition}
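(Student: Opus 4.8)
The plan is to peel back the definition of the Half-Space Step in Algorithm~\ref{alg:main.x.halfspacestep} and reduce everything to an elementary group-wise computation for each $g\in\I^{\neq 0}(x_k)$. The one structural fact needed at the outset is that $\Psi=f+\lambda\Omega$, so $\Grad\Psi_{\B_k}=\Grad f_{\B_k}+\lambda\Grad\Omega$, together with the observation that on $\I^{\neq 0}(x_k)$ every group obeys $[x_k]_g\neq 0$. The latter makes the group term $x\mapsto\norm{[x]_g}$ differentiable at $x_k$, with gradient $[x_k]_g/\norm{[x_k]_g}$, so that
\[
[\Grad\Psi_{\B_k}(x_k)]_g=[\Grad f_{\B_k}(x_k)]_g+\lambda\frac{[x_k]_g}{\norm{[x_k]_g}}.
\]

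First I would insert this into the SGD update of line~\ref{line:half_space_trial_iterate}. Recalling $[\hat x_{k+1}]_g=[x_k]_g-\alpha_k[\Grad f_{\B_k}(x_k)]_g$, the trial point on the free coordinates simplifies to
\[
[\tilde x_{k+1}]_g=[x_k]_g-\alpha_k[\Grad\Psi_{\B_k}(x_k)]_g=[\hat x_{k+1}]_g-\alpha_k\lambda\frac{[x_k]_g}{\norm{[x_k]_g}},
\]
which is exactly the nonzero branch of the claimed formula. Then I would evaluate the projection test of line~\ref{line:half_space_set_new_iterate_start}: taking the inner product of the displayed expression with $[x_k]_g$ and using $[x_k]_g^\top[x_k]_g=\norm{[x_k]_g}^2$ gives $[\tilde x_{k+1}]_g^\top[x_k]_g=[\hat x_{k+1}]_g^\top[x_k]_g-\alpha_k\lambda\norm{[x_k]_g}$. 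Substituting this into the negation of the zeroing condition $[\tilde x_{k+1}]_g^\top[x_k]_g<\epsilon\norm{[x_k]_g}^2$ and factoring out $\norm{[x_k]_g}$ produces the case split, with the group retained exactly when $[\hat x_{k+1}]_g^\top[x_k]_g$ lies above the stated threshold and sent to zero otherwise.

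For the consequence I would argue directly by Cauchy--Schwarz. Assuming $\norm{[\hat x_{k+1}]_g}\leq\alpha_k\lambda$,
\[
[\hat x_{k+1}]_g^\top[x_k]_g\leq\norm{[\hat x_{k+1}]_g}\,\norm{[x_k]_g}\leq\alpha_k\lambda\norm{[x_k]_g},
\]
so for every $\epsilon\geq 0$ the inner product fails the retention inequality and the projector maps $[x_{k+1}]_g$ to $0$. I do not expect a genuine obstacle here, since the entire argument is substitute-and-simplify. The two points deserving care are the differentiability of the group norm on $\I^{\neq 0}(x_k)$, which is what licenses the gradient formula, and the bookkeeping of the strict-versus-nonstrict inequality at the boundary inherited from the $<$ in line~\ref{line:half_space_set_new_iterate_start}; carrying the latter through also shows that the threshold one actually derives carries a factor $\norm{[x_k]_g}$ on the $\epsilon$ term, matching the statement up to that normalization.
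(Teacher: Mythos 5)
Your proposal is correct and follows essentially the same route as the paper's own proof: expand $[\Grad\Psi_{\B_k}(x_k)]_g$ into $[\Grad f_{\B_k}(x_k)]_g+\lambda[x_k]_g/\norm{[x_k]_g}$, substitute into the projection test to get the threshold on $[\hat x_{k+1}]_g^\top[x_k]_g$, and finish the consequence with Cauchy--Schwarz. Your observation that the derived threshold is $(\alpha_k\lambda+\epsilon\norm{[x_k]_g})\norm{[x_k]_g}$ rather than $(\alpha_k\lambda+\epsilon)\norm{[x_k]_g}$ is exactly what the paper's proof obtains as well, so the proposition statement carries a typo that your derivation correctly identifies.
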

\begin{proof}
For $g\in\I^{\neq 0}(x_k)\bigcap\mathcal{I}^{\neq 0}(x_{k+1})$, by Algorithm~\ref{alg:main.x.halfspacestep}, it is equivalent to  
\begin{equation}
\begin{split}
\left[x_k-\alpha_k\Grad f_{\mathcal{B}_k}(x_k)-\alpha_k\lambda \frac{[x_k]_g}{\norm{[x_k]_g}}\right]_g^\top[x_k]_g> \epsilon \norm{[x_k]_g}^2,\\
[\hat{x}_{k+1}]_g^\top[x_k]_g -\alpha_k\lambda \norm{[x_k]_g} > \epsilon \norm{[x_k]_g}^2,\\
[\hat{x}_{k+1}]_g^\top[x_k]_g > (\alpha_k\lambda+\epsilon\norm{[x_k]_g})\norm{[x_k]_g}.
\end{split}
\end{equation}
Similarly, $g\in\I^{\neq 0}(x_k)\bigcap\mathcal{I}^{0}(x_{k+1})$ is equivalent to 
\begin{equation}\label{eq:xkp1_equals_zero}
\begin{split}
\left[x_k-\alpha_k\Grad f_{\mathcal{B}_k}(x_k)-\alpha_k\lambda \frac{[x_k]_g}{\norm{[x_k]_g}}\right]_g^\top[x_k]_g\leq \epsilon \norm{[x_k]_g}^2,\\
[\hat{x}_{k+1}]_g^\top[x_k]_g -\alpha_k\lambda \norm{[x_k]_g} \leq \epsilon \norm{[x_k]_g}^2,\\
[\hat{x}_{k+1}]_g^\top[x_k]_g \leq (\alpha_k\lambda+\epsilon\norm{[x_k]_g})\norm{[x_k]_g}.
\end{split}    
\end{equation}
If $\norm{[\hat{x}_{k+1}]_g}\leq \alpha_k\lambda$, then 
\begin{equation}
[\hat{x}_{k+1}]_g^\top[x_k]_g \leq \norm{[\hat{x}_{k+1}]_g}\norm{[x_k]_g}\leq \alpha_k\lambda \norm{[x_k]_g}.
\end{equation}
Hence $[x_{k+1}]_g=0$ holds for any $\epsilon\geq 0$ by~\eqref{eq:xkp1_equals_zero}, which implies that the projection region of~\proxsg{} and its variance reduction variants, \eg,~\proxsvrg{},~\proxspider{} and~\saga{} are the subsets of~\algacro{}'s.
\end{proof}

\section{Non-Lipschitz Continuity of $\Grad \Psi(x)$ on $\mathbb{R}^n$}\label{sec.Psi_not_lipschitz_countinuous_gradient}\label{appendix:nonlipschitz-continuity}

The first-derivative of $\Psi(x)$ at $x\neq 0$ can be written as 
\begin{equation}
\Grad \Psi(x)=\Grad f(x)+\lambda\sum_{g\in\mathcal{G}}\frac{[x]_g}{\norm{[x]_g}}
\end{equation}
We next show $\frac{[x]_g}{\norm{[x]_g}}$ is not Lipschitz continuous on $\mathbb{R}^n$ if $|g|\geq 2$. Take a example for $[x]_g=(x_1, x_2)^\top\in\mathbb{R}^2$, and select $x_1=(t, a_1t), x_2=(t, a_2t), a_1\neq a_2$ and $t\in\mathbb{R}$. Then suppose there exists a positive constant $L<\infty$ such that Lipschitz continuity holds as follows
\begin{equation}
\begin{split}
\norm{\frac{x_1}{\norm{x_1}}-\frac{x_2}{\norm{x_2}}}&\leq L\norm{x_1-x_2}\\
\norm{\frac{(1,a_1)}{\sqrt{1+a_1^2}}-\frac{(1,a_2)}{\sqrt{1+a_2^2}}}&\leq L|a_1-a_2|\cdot|t|
\end{split}
\end{equation}
holds for any $t\in\mathbb{R}$, and note the left hand side is a positive constant. However, letting $t\to 0$, we have that $L\to \infty$ which contradicts the $L<\infty$. Therefore, $\frac{[x]_g}{\norm{[x]_g}}$ is not Lipschitz continuous on $\mathbb{R}^2$, specifically the region surrounding the origin point. 

Although $[\Grad \Psi(x)]_{\I^{\neq 0}(x)}$ is not Lipscthiz continuous on $\mathbb{R}^{n}$, the Lipschitz continuity still holds on by excluding a fixed size $\ell_2$-ball centered at the origin for the group of non-zero variables $\I^{\neq 0}(x)$ from $\mathbb{R}^{n}$. For our paper, we define the region where Lipscthiz continuity of $[\Grad \Psi(x)]_{\I^{\neq 0}(x)}$ still holds as 
\begin{equation}\label{def:mathcal_X}
\mathcal{X}=\{x: \norm{[x]_g}\geq \delta_1\ \text{for each}\ g\in\I^{\neq 0}(x),\ \text{and}\ [x]_g=0\ \text{for each}\ g\in\I^{0}(x) \}.    
\end{equation}

\section{Convergence Analysis Proof}\label{appendix:convergence_analysis}

Denote the following frequently used constant $R$ describing the size of neighbor around $x^*$.
\begin{equation}\label{def:R}
R:=\min\left\{\frac{-(\delta_1+2\epsilon\delta_2)+\sqrt{(\delta_1+2\epsilon\delta_2)^2-4\epsilon^2\delta_2+4\epsilon\delta_1^2}}{\epsilon},\delta_1\right\}>0.
\end{equation}
\textbf{Remark:}~\eqref{def:R} is well defined as $0<\epsilon< \frac{\delta_1^2}{\delta_2}$, and degenerated to $\delta_1$ as $\epsilon=0$.

\subsection{Sufficient Decrease of~\proxsgstep{} and~\halfspacestep{}}\label{appendix:proof_sufficient_decrease}

Our convergence analysis relies on the following sufficient decrease properties of~\halfspacestep{} and~\proxsgstep{}.

\paragraph{Sufficient Decrease of~\halfspacestep{} as Lemma~\ref{lemma:sufficient_decrease_half_space}:} 

\begin{proof}

It follows Algorithm~\ref{alg:main.x.halfspacestep} and the definition of $\tilde{\G}_k$ and $\hat{\G}_k$ that $x_{k+1}=x_k+\alpha_kd_k$ where $d_k$ is
\begin{equation}\label{eq:proof_d_k_def}
[d_k]_g=
\begin{cases}
-[\partial \Psi_{\mathcal{B}_k}(x_{k})]_g& \text{if}\ g\in\tilde{\mathcal{G}}_k=\I^{\neq 0}(x_k)\bigcap \I^{\neq 0}(x_{k+1}),\\
-[x_{k}]_g/\alpha_k & \text{if}\ g \in \hat{\mathcal{G}}_k=\I^{\neq 0}(x_k)\bigcap \I^{0}(x_{k+1}),\\
0 & \text{otherwise}.
\end{cases}
\end{equation}
We also notice that for any $g\in\hat{\mathcal{G}}_k$, the following holds
\begin{equation}\label{eq:descent_direction_tmp1}
\begin{split}
[x_k-\alpha_k\partial \Psi_{\mathcal{B}_k}(x_k)]_g^\top[x_k]_g<\epsilon \norm{[x_k]_g}^2,\\
(1-\epsilon)\norm{[x_k]_g}^2< \alpha_k[\partial \Psi_{\mathcal{B}_k}(x_k)]_g^\top[x_k]_g.
\end{split}
\end{equation}
For simplicity, let $\I^{\neq 0}_k:=\I^{\neq 0}(x_k)$. Since $[d_k]_g=0$ for any $g\in \I^{0}(x_k)$, then by~(\ref{eq:proof_d_k_def}) and~(\ref{eq:descent_direction_tmp1}), we have 
\begin{equation}
\begin{split}
d_k^\top\partial \Psi_{\B_k}(x_k)&=[d_k]_{\I^{\neq 0}_k}^\top[\partial \Psi_{\B_k}(x_k)]_{\I^{\neq 0}_k}\\
&=-\sum_{g\in\tilde{\G}_k}\norm{[\partial \Psi_{\B_k}(x_k)]_g}^2-\sum_{g\in \hat{\G}_k}\frac{1}{\alpha_k}[x_k]_g^\top[\partial \Psi_{\B_k}(x_k)]_g\\
&\leq -\sum_{g\in\tilde{\G}_k}\norm{[\partial \Psi_{\B_k}(x_k)]_g}^2-\sum_{g\in \hat{\G}_k}\frac{1}{\alpha_k^2}(1-\epsilon)\norm{[x_k]_g}^2< 0,
\end{split}
\end{equation}
holds for any $\epsilon\in[0,1)$, which implies that $d_k$ is a descent direction for $\Psi_{\B_k}(x_k)$. 

Now, we start to prove the suffcient decrease of~\halfspacestep{}. By the descent lemma, $x_k\in\mathcal{X}$ and the Lipschitz continuity of $[\partial \Psi_{\mathcal{B}_k}]_{\I_k^{\neq 0}}$ on $\mathcal{X}$, we have that 
\begin{equation}\label{eq:decentlemma}
\Psi_{\mathcal{B}_k}(x_{k}+\alpha_k d_{k})\leq \Psi_{\mathcal{B}_k}(x_{k})+\alpha_k[\partial \Psi_{\mathcal{B}_k}(x_{k})]_{\I_k^{\neq 0}}^\top[d_{k}]_{\I_k^{\neq 0}}+\frac{L}{2}\alpha_k^2\norm{[d_{k}]_{\I_k^{\neq 0}}}^2.
\end{equation}
Then it follows~\eqref{eq:proof_d_k_def} that~\eqref{eq:decentlemma} can be rewritten as follows
\begin{equation}\label{eq:decentlemma_more}
\begin{split}
&\Psi_{\mathcal{B}_k}(x_{k}+\alpha_k d_{k})\\
\leq& \Psi_{\mathcal{B}_k}(x_{k})+\alpha_k[\partial \Psi_{\mathcal{B}_k}(x_{k})]_{\I^{\neq 0}_k}^\top[d_{k}]_{\I^{\neq 0}_k}+\frac{L}{2}\alpha_k^2\norm{[d_{k}]_{\I^{\neq 0}_k}}^2\\
=&\Psi_{\mathcal{B}_k}(x_{k})-\sum_{g\in\tilde{\G}_k}\norm{[\partial \Psi_{\mathcal{B}_k}(x_{k})]_g}^2\left(\alpha_k-\frac{L}{2}\alpha_k^2\right)-\sum_{g\in\hat{\G}_k} \left\{[\partial \Psi_{\mathcal{B}_k}(x_{k})]_g^\top[x_{k}]_g-\frac{L}{2}\norm{[x_{k}]_g}^2\right\}
\end{split}
\end{equation}
Consequently, combining with $\epsilon\in [0,1)$ and~\eqref{eq:descent_direction_tmp1}, ~\eqref{eq:decentlemma_more} can be further shown as 
\begin{equation}
\Psi_{\mathcal{B}_k}(x_{k+1})\leq \Psi_{\mathcal{B}_k}(x_{k})-\left(\alpha_k-\frac{\alpha_k^2L}{2}\right)\sum_{g\in\tilde{\G}_k}\norm{[\partial \Psi_{\mathcal{B}_k}(x_{k})]_g}^2-\left(\frac{1-\epsilon}{\alpha_k}-\frac{L}{2}\right)\sum_{g\in\hat{\G}_k}\norm{[x_{k}]_g}^2,
\end{equation}
which completes the proof.
   
\end{proof}

\paragraph{Sufficient Decrease of~\proxsgstep:} The second lemma is well known for proximal operator under our notations. We include this proof for completeness.
\begin{lemma}\label{lemma:Psi_decrease_proxsg}
	Line~\ref{line:prox} of Algorithm~\ref{alg:main.x.prox_sg_step} yields that $x_{k+1}=x_{k}-\alpha_k\xi_{\alpha_k, \mathcal{B}_k}(x_k)$, where
	\begin{equation}
	\xi_{\alpha_k, \mathcal{B}_k}(x_k)\in -\left(\Grad f_{\mathcal{B}_k}(x_{k})+\lambda \partial \Omega(x_{k+1})\right).
	\end{equation}
	And the objective value $ \Psi_{\mathcal{B}_k} $ satisfies
	\begin{equation}\label{eq:decrease_p_batch}
	\Psi_{\mathcal{B}_k}(x_{k+1})\leq \Psi_{\mathcal{B}_k}(x_{k})-\left(\alpha_k-\frac{\alpha_k^2L}{2}\right)\norm{\xi_{\alpha_k, \mathcal{B}_k}(x_k)}^2.
	\end{equation}	
\end{lemma}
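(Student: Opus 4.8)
The plan is to derive both assertions from the variational characterization of the proximal subproblem~\eqref{eq:proxmapping} that defines $x_{k+1}$. First I would read off the identity $x_{k+1}=x_k-\alpha_k\xi_{\alpha_k,\mathcal{B}_k}(x_k)$ immediately from the definition of the mini-batch gradient-mapping estimator $\xi_{\eta,\mathcal{B}}(\bm{x})=\frac{1}{\eta}(\bm{x}-\prox_{\eta\lambda\Omega(\cdot)}(\bm{x}-\eta\Grad f_\mathcal{B}(\bm{x})))$, specialized to $\eta=\alpha_k$, $\bm{x}=x_k$, $\mathcal{B}=\mathcal{B}_k$, since line~\ref{line:prox} sets $x_{k+1}=\prox_{\alpha_k\lambda\Omega(\cdot)}(x_k-\alpha_k\Grad f_{\mathcal{B}_k}(x_k))$. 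For the subgradient inclusion I would write the first-order optimality condition of the strongly convex subproblem~\eqref{eq:proxmapping},
\[
0\in\frac{1}{\alpha_k}\bigl(x_{k+1}-(x_k-\alpha_k\Grad f_{\mathcal{B}_k}(x_k))\bigr)+\lambda\partial\Omega(x_{k+1}),
\]
and rearrange: substituting the identity above shows that $\xi_{\alpha_k,\mathcal{B}_k}(x_k)$ differs from $\Grad f_{\mathcal{B}_k}(x_k)$ by an element of $\lambda\partial\Omega(x_{k+1})$, which is the claimed inclusion. In particular there is a specific $s\in\partial\Omega(x_{k+1})$ with $\lambda s=\xi_{\alpha_k,\mathcal{B}_k}(x_k)-\Grad f_{\mathcal{B}_k}(x_k)$, and I would record this $s$ for use in the decrease bound.

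For the sufficient-decrease inequality~\eqref{eq:decrease_p_batch}, the approach is to bound $f_{\mathcal{B}_k}$ and $\lambda\Omega$ separately and add them. By Assumption~\ref{assumption} and the Remark that $\Grad f_\mathcal{B}$ inherits the Lipschitz constant $L$, the descent lemma applied to $f_{\mathcal{B}_k}$ with the step $x_{k+1}-x_k=-\alpha_k\xi_{\alpha_k,\mathcal{B}_k}(x_k)$ gives
\[
f_{\mathcal{B}_k}(x_{k+1})\leq f_{\mathcal{B}_k}(x_k)-\alpha_k\Grad f_{\mathcal{B}_k}(x_k)^\top\xi_{\alpha_k,\mathcal{B}_k}(x_k)+\frac{L\alpha_k^2}{2}\norm{\xi_{\alpha_k,\mathcal{B}_k}(x_k)}^2.
\]
Separately, since $\Omega$ is a norm and therefore convex, the subgradient inequality at $x_{k+1}$ for the \emph{particular} $s$ above, together with $x_k-x_{k+1}=\alpha_k\xi_{\alpha_k,\mathcal{B}_k}(x_k)$, yields $\lambda\Omega(x_{k+1})\leq\lambda\Omega(x_k)-\alpha_k\lambda s^\top\xi_{\alpha_k,\mathcal{B}_k}(x_k)$.

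I would then add the two bounds and substitute $\lambda s=\xi_{\alpha_k,\mathcal{B}_k}(x_k)-\Grad f_{\mathcal{B}_k}(x_k)$, so that $\lambda s^\top\xi_{\alpha_k,\mathcal{B}_k}(x_k)=\norm{\xi_{\alpha_k,\mathcal{B}_k}(x_k)}^2-\Grad f_{\mathcal{B}_k}(x_k)^\top\xi_{\alpha_k,\mathcal{B}_k}(x_k)$; the two inner-product terms involving $\Grad f_{\mathcal{B}_k}(x_k)$ cancel exactly, leaving $\Psi_{\mathcal{B}_k}(x_{k+1})\leq\Psi_{\mathcal{B}_k}(x_k)-(\alpha_k-\frac{\alpha_k^2L}{2})\norm{\xi_{\alpha_k,\mathcal{B}_k}(x_k)}^2$ as claimed. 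The computation is routine; the only point requiring care is that the subgradient used in the convexity bound must be the \emph{same} $s$ produced by the optimality condition, rather than an arbitrary element of $\partial\Omega(x_{k+1})$, because it is precisely this choice that makes the cross terms cancel. A secondary point worth flagging is that convexity of $f_{\mathcal{B}_k}$ is \emph{not} needed here—only the $L$-Lipschitz gradient enters, through the descent lemma—so the argument is valid even in the nonconvex regime, with convexity required only of $\Omega$.
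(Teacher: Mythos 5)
Your proposal is correct and follows essentially the same route as the paper's own proof: optimality condition of the proximal subproblem to extract the specific subgradient $v\in\partial\Omega(x_{k+1})$ with $\xi_{\alpha_k,\mathcal{B}_k}(x_k)=\Grad f_{\mathcal{B}_k}(x_k)+\lambda v$, descent lemma on $f_{\mathcal{B}_k}$, convexity of $\Omega$ at that same subgradient, and exact cancellation of the cross terms. Your two flagged points (that the subgradient must be the one produced by the optimality condition, and that convexity of $f_{\mathcal{B}_k}$ is never used) are both accurate; note only that the inclusion you derive carries the opposite sign to the one printed in the lemma statement, which is a sign typo in the statement rather than in your argument, since the paper's proof derives the same positive-sign inclusion you do.
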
	
\begin{proof}
	It follows from the line~\eqref{line:prox} in Algorithm~\ref{alg:main.x.prox_sg_step} and the definitions of proximal operator that
	\begin{equation}
	\begin{split}
	\quad x_{k+1}&=\argmin_{x\in \mathbb{R}^n}\ \frac{1}{2\alpha_k}\norm{x-(x_k-\alpha_k\Grad f_{\mathcal{B}_k}(x_k))}^2+\lambda\Omega(x)\\
	&=\argmin_{x\in \mathbb{R}^n}\ \Grad f_{\mathcal{B}_k}(x_k)^\top(x-x_k)+\lambda\Omega(x)+\frac{1}{2\alpha_k}\norm{x-x_k}^2
	\end{split}
	\end{equation}
	By the optimal condition, we have 
	\begin{equation}
	\begin{split}
	0 & \in \frac{1}{\alpha_k}(x_{k+1}-x_k)+\Grad f_{\mathcal{B}_k}(x_k)+\lambda\partial\Omega(x_{k+1}).
	\end{split}
	\end{equation}
	Since $ x_{k+1}=x_k-\alpha_k\xi_{\alpha_k,\mathcal{B}_k}(x_k)$, we have
	\begin{equation}
	0 \in -\xi_{\alpha_k,\mathcal{B}_k}(x_k)+\Grad f_{\mathcal{B}_k}(x_k)+\lambda\partial\Omega(x_{k+1}),
	\end{equation}
	which implies that
	\begin{equation}
	\xi_{\alpha_k,\mathcal{B}_k}(x_k)\in \Grad f_{\mathcal{B}_k}(x_k)+\lambda \partial\Omega(x_{k+1}).
	\end{equation}
	And thus there exists some $v\in \partial \Omega(x_{k+1})$ such that 
	\begin{equation}\label{eq:st}
	\xi_{\alpha_k,\mathcal{B}_k}(x_k)= \Grad f_{\mathcal{B}_k}(x_k) +\lambda v.
	\end{equation}
	\noindent
	By Lipschitz continuity of $ \Grad f_{\mathcal{B}_k} $ and convexity of $\Omega(\cdot)$, we have 
	\begin{equation}\label{eq:f_lipschiz_convex}
	\begin{split}
	f_{\mathcal{B}_k}(x_{k+1})&=f_{\mathcal{B}_k}(x_k-\alpha_k\xi_{\alpha_k,\mathcal{B}_k}(x_k))\\
	&\leq f_{\mathcal{B}_k}(x_k)-\alpha_k\Grad f_{\mathcal{B}_k}(x_k)^\top\xi_{\alpha_k,\mathcal{B}_k}(x_k)+\frac{\alpha_k^2L}{2}\norm{\xi_{\alpha_k,\mathcal{B}_k}(x_k)}^2
	\end{split}
	\end{equation}
	and 
	\begin{equation}\label{eq:omega_convex}
	\begin{split}
	\lambda\Omega(x_{k+1})&=\lambda\Omega(x_k-\alpha_k\xi_{\alpha_k,\mathcal{B}_k}(x_k))\\
	&\leq \lambda \Omega(x_k) + \lambda v^\top(x_k-\alpha_k\xi_{\alpha_k,\mathcal{B}_k}(x_k)-x_k)\\
	&=\lambda \Omega(x_k) - \alpha_k\lambda v^\top\xi_{\alpha_k,\mathcal{B}_k}(x_k).
	\end{split}
	\end{equation}		
	Hence, by \eqref{eq:st}, \eqref{eq:f_lipschiz_convex}  and~\eqref{eq:omega_convex}, the objective $\Psi_{\mathcal{B}_k}(x_{k+1})$ satisfies
	\begin{equation*}\label{eq:Psi_decrease}
	\begin{split}
	&\Psi_{\mathcal{B}_k}(x_{k+1})=f_{\mathcal{B}_k}(x_{k+1}) + \lambda\Omega(x_{k+1})\\
	\leq& f_{\mathcal{B}_k}(x_k)-\alpha_k\Grad f_{\mathcal{B}_k}(x_k)^\top\xi_{\alpha_k,\mathcal{B}_k}(x_k)+\frac{\alpha_k^2L}{2}\norm{\xi_{\alpha_k,\mathcal{B}_k}(x_k)}^2+\lambda \Omega(x_k) - \alpha_k\lambda v^\top\xi_{\alpha_k,\mathcal{B}_k}(x_k)\\
	=&\Psi_{\mathcal{B}_k}(x_k)-\alpha_k(\Grad f_{\mathcal{B}_k}(x_k)+\lambda v)^\top\xi_{\alpha_k,\mathcal{B}_k}(x_k)+\frac{\alpha_k^2L}{2}\norm{\xi_{\alpha_k,\mathcal{B}_k}(x_k)}^2\\
	=&\Psi_{\mathcal{B}_k}(x_k) -\left(\alpha_k-\frac{\alpha_k^2L}{2}\right)\norm{\xi_{\alpha_k,\mathcal{B}_k}(x_k)}^2,
	\end{split}
	\end{equation*}
	which completes the proof.
\end{proof}

According to Lemma~\ref{lemma:sufficient_decrease_half_space} and Lemma~\ref{lemma:Psi_decrease_proxsg}, the objective value on a mini-batch tends to achieve a sufficient decrease in both Prox-SG Step and Half-Space Step given $\alpha_k$ is small enough. By taking the expectation on both sides, we obtain the following result characterizing the sufficient decrease from $\Psi(x_{k})$ to $\mathbb{E}\left[\Psi(x_{k+1})\right]$.

\begin{corollary}\label{corollary:Psi_epoch_decrease}	
	For iteration $k$, we have 
	\begin{enumerate}[label=(\roman*),leftmargin=0.5cm]
		\item 	if $k$th iteration conducts~\proxsgstep,  then
		\begin{equation}
		\mathbb{E}\left[\Psi(x_{k+1})\right]\leq \Psi(x_k)-\left(\alpha_k-\frac{\alpha_k^2L}{2}\right)\mathbb{E}\left[\norm{\xi_{\alpha_k,\mathcal{B}_k}(x_k)}^2\right].
		\end{equation}
		\item if $k$th iteration conducts~\halfspacestep, $x_k\in\mathcal{X}$, then
		\begin{equation}
		\mathbb{E}\left[\Psi(x_{k+1})\right]\leq \Psi(x_k)-\sum_{g\in\tilde{\mathcal{G}}_k}\left(\alpha_k-\frac{\alpha_k^2L}{2}\right)\mathbb{E}\left[\norm{\partial \Psi_{\mathcal{B}_k}(x_k)}^2\right]-\left(\frac{1-\epsilon}{\alpha_k}-\frac{L}{2}\right)\sum_{g\in\hat{\G}_k}\norm{[x_{k}]_g}^2.
		\end{equation}
	\end{enumerate}
\end{corollary}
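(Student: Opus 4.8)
The plan is to obtain Corollary~\ref{corollary:Psi_epoch_decrease} as the ``expectation version'' of the two pathwise sufficient-decrease estimates already established in Lemma~\ref{lemma:sufficient_decrease_half_space} and Lemma~\ref{lemma:Psi_decrease_proxsg}. I would fix the iteration $k$ and condition on the $\sigma$-algebra $\mathcal{F}_k$ generated by the history $x_0,\dots,x_k$, so that $x_k$ is $\mathcal{F}_k$-measurable while the mini-batch $\mathcal{B}_k$ is a fresh independent draw. For each realization of $\mathcal{B}_k$ the relevant lemma supplies a deterministic inequality of the form $\Psi_{\mathcal{B}_k}(x_{k+1})\le \Psi_{\mathcal{B}_k}(x_k)-D_k$, where $D_k\ge 0$ collects the decrease terms; I then take $\mathbb{E}[\,\cdot\mid\mathcal{F}_k]$ of both sides and simplify each piece.

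The base term on the right is clean. Because $f_{\mathcal{B}_k}$ is the average of the instance functions over $\mathcal{B}_k$ and $\Omega$ does not depend on the batch, unbiasedness of the mini-batch gives $\mathbb{E}[f_{\mathcal{B}_k}(x_k)\mid\mathcal{F}_k]=f(x_k)$, hence $\mathbb{E}[\Psi_{\mathcal{B}_k}(x_k)\mid\mathcal{F}_k]=\Psi(x_k)$, which reproduces the $\Psi(x_k)$ appearing in both cases (this also explains why the right-hand side carries no expectation: the statement is conditional on $x_k$). For case (i) the coefficient $\alpha_k-\alpha_k^2L/2$ is deterministic once $\alpha_k$ is fixed, so it factors out of $\mathbb{E}[\|\xi_{\alpha_k,\mathcal{B}_k}(x_k)\|^2\mid\mathcal{F}_k]$, matching Lemma~\ref{lemma:Psi_decrease_proxsg} term for term. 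For case (ii) the same bookkeeping applies to the two group sums of Lemma~\ref{lemma:sufficient_decrease_half_space}; here one should note that the index sets $\tilde{\mathcal{G}}_k$ and $\hat{\mathcal{G}}_k$ are themselves random (they are determined by which groups the half-space projection zeroes out, hence by $\mathcal{B}_k$), and that each subtracted sum is non-negative under the stated ranges for $\alpha_k$ and $\epsilon$ in Theorem~\ref{thm:convergence}, so the inequality is preserved after averaging.

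The step I expect to be the genuine obstacle is the left-hand side: the lemmas control $\mathbb{E}[\Psi_{\mathcal{B}_k}(x_{k+1})\mid\mathcal{F}_k]$, whereas the corollary asserts a bound on $\mathbb{E}[\Psi(x_{k+1})\mid\mathcal{F}_k]$, and these differ by $\mathbb{E}[f_{\mathcal{B}_k}(x_{k+1})-f(x_{k+1})\mid\mathcal{F}_k]$, which is \emph{not} zero because $x_{k+1}$ is built from the very mini-batch $\mathcal{B}_k$ that defines $\Psi_{\mathcal{B}_k}$. The cleanest way I see to close this gap is to rerun the estimate against the \emph{full} objective from the start, illustrated on case (i): apply the descent lemma to $f$ (valid since $\nabla f$ is $L$-Lipschitz by Assumption~\ref{assumption}) together with convexity of $\Omega$, use $x_{k+1}=x_k-\alpha_k\xi_{\alpha_k,\mathcal{B}_k}(x_k)$ with $\xi_{\alpha_k,\mathcal{B}_k}(x_k)\in\nabla f_{\mathcal{B}_k}(x_k)+\lambda\partial\Omega(x_{k+1})$, and collect the resulting cross term $\alpha_k\bigl(\nabla f_{\mathcal{B}_k}(x_k)-\nabla f(x_k)\bigr)^\top\xi_{\alpha_k,\mathcal{B}_k}(x_k)$. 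Its conditional mean does not vanish on its own, but it is a gradient-estimation error whose size is governed by the mini-batch variance; invoking the growing batch size $|\mathcal{B}_k|=\mathcal{O}(t)$ from Theorem~\ref{thm:convergence} drives it to the lower order that the displayed clean inequality implicitly absorbs, and the half-space case (ii) is handled analogously with the $\mathcal{X}$-restricted Lipschitz continuity of $\nabla\Psi_{\mathcal{B}_k}$. I would finally verify that under these choices the cross term is dominated by the retained decrease, so that the stated bounds hold in conditional expectation and, after taking total expectation, in the form used by the subsequent convergence argument.
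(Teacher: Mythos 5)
Your core move --- take conditional expectation of the pathwise bounds in Lemma~\ref{lemma:sufficient_decrease_half_space} and Lemma~\ref{lemma:Psi_decrease_proxsg}, use unbiasedness of the mini-batch to turn $\mathbb{E}[\Psi_{\mathcal{B}_k}(x_k)\mid x_k]$ into $\Psi(x_k)$, and pull the deterministic coefficients out of the expectation --- is exactly what the paper does; its entire ``proof'' of the corollary is the one-sentence remark that the result follows ``by taking the expectation on both sides.'' So in terms of route you and the paper coincide.

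Where you go beyond the paper is in flagging the left-hand side: the lemmas bound $\Psi_{\mathcal{B}_k}(x_{k+1})$, and since $x_{k+1}$ is constructed from the same batch $\mathcal{B}_k$, one has $\mathbb{E}[\Psi_{\mathcal{B}_k}(x_{k+1})\mid x_k]\neq \mathbb{E}[\Psi(x_{k+1})\mid x_k]$ in general (indeed the bias typically has the unfavorable sign, since the update preferentially decreases the \emph{sampled} loss). This is a genuine gap in the paper's derivation, not in yours, and you are right that it is the crux. Be aware, however, that your proposed repair does not actually deliver the corollary \emph{as stated}: rerunning the descent lemma against the full $f$ produces the cross term $\alpha_k\bigl(\Grad f_{\mathcal{B}_k}(x_k)-\Grad f(x_k)\bigr)^{\top}\xi_{\alpha_k,\mathcal{B}_k}(x_k)$, whose conditional mean is nonzero because $\xi_{\alpha_k,\mathcal{B}_k}(x_k)$ is a nonlinear function of the stochastic gradient; bounding it by the mini-batch variance yields an inequality with an additional $\mathcal{O}(\sigma^2/|\mathcal{B}_k|)$ slack (or degraded constants), whereas the displayed corollary has no error term to absorb it. So you must either state and prove the perturbed version (which would still suffice for the telescoping argument in the proof of Theorem~\ref{thm:convergence}, given $\alpha_k=\mathcal{O}(1/(\sqrt{N}t))$ and $|\mathcal{B}_k|=\mathcal{O}(t)$), or accept the paper's implicit identification of $\mathbb{E}[\Psi_{\mathcal{B}_k}(x_{k+1})]$ with $\mathbb{E}[\Psi(x_{k+1})]$, which is strictly speaking unjustified. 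Your second remark --- that $\tilde{\mathcal{G}}_k$ and $\hat{\mathcal{G}}_k$ are random sets and that nonnegativity of the subtracted sums under the stated ranges of $\alpha_k$ and $\epsilon$ is what preserves the inequality under averaging --- is correct and again more careful than the paper, which keeps the $\hat{\mathcal{G}}_k$-sum outside the expectation without comment.
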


Corollary~\ref{corollary:Psi_epoch_decrease} shows that the bound of $\Psi$ depends on step size $\alpha_k$ and norm of search direction. It further indicates that both \halfspacestep{} and \proxsgstep{} can make some progress to optimality with proper selection of $\alpha_k$.

\subsection{Proof of Theorem~\ref{thm:convergence}}\label{appendix:convergence_theorem}

Toward that end, we first show that if the optimal distance from $x_k$ to the local minimizer $x^*$ is sufficiently small, then~\algacro{} already covers the supports of $x^*$,~\ie, $\I^{\neq 0}(x^*)\subseteq \I^{\neq 0}(x_k)$. 
\begin{lemma}\label{lemma:support_cover}
If $\norm{x_k-x^*}\leq R$, then $\I^{\neq 0}(x^*)\subseteq \I^{\neq 0}(x_k)$.    
\end{lemma}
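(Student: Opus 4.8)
The plan is to argue by a direct group-wise comparison, using the lower bound $2\delta_1$ on the norms of the nonzero groups of $\bm{x}^*$ together with the fact that the neighborhood radius $R$ is chosen no larger than $\delta_1$. Concretely, I would fix an arbitrary group $g\in\I^{\neq 0}(\bm{x}^*)$ and show directly that $[\bm{x}_k]_g\neq 0$, which is exactly the claimed inclusion.

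First I would record the two ingredients from the definitions. From the assumption $0<2\delta_1:=\min_{g\in\I^{\neq 0}(\bm{x}^*)}\norm{[\bm{x}^*]_g}$, every nonzero group of $\bm{x}^*$ satisfies $\norm{[\bm{x}^*]_g}\geq 2\delta_1$. From the definition of $R$ in~\eqref{def:R}, we have $R\leq\delta_1$, since the minimum there is taken against $\delta_1$. Combined with the hypothesis $\norm{\bm{x}_k-\bm{x}^*}\leq R$, this yields the per-group bound $\norm{[\bm{x}_k]_g-[\bm{x}^*]_g}\leq\norm{\bm{x}_k-\bm{x}^*}\leq R\leq\delta_1$, because restricting a vector to a subset of its coordinates can only decrease the Euclidean norm.

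Then I would apply the reverse triangle inequality group-wise: for any $g\in\I^{\neq 0}(\bm{x}^*)$,
\begin{equation}
\norm{[\bm{x}_k]_g}\geq\norm{[\bm{x}^*]_g}-\norm{[\bm{x}_k]_g-[\bm{x}^*]_g}\geq 2\delta_1-\delta_1=\delta_1>0.
\end{equation}
Hence $[\bm{x}_k]_g\neq 0$, i.e., $g\in\I^{\neq 0}(\bm{x}_k)$. Since $g$ was an arbitrary element of $\I^{\neq 0}(\bm{x}^*)$, this establishes $\I^{\neq 0}(\bm{x}^*)\subseteq\I^{\neq 0}(\bm{x}_k)$.

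There is no genuine obstacle here; the only point requiring care is verifying that the definition~\eqref{def:R} indeed forces $R\leq\delta_1$, so that the radius is small enough to keep every active group bounded away from the origin, together with the strictness $\delta_1>0$ that makes the final bound nondegenerate. This elementary geometric fact is precisely what underlies the subsequent claim that $\bm{x}^*$ inhabits $\S_k$ and that the~\halfspacestep{} never projects a truly active group onto zero.
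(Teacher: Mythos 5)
Your proof is correct and is essentially identical to the paper's own argument: both fix a group $g\in\I^{\neq 0}(\bm{x}^*)$, use $R\leq\delta_1$ from~\eqref{def:R}, bound $\norm{[\bm{x}_k-\bm{x}^*]_g}\leq\norm{\bm{x}_k-\bm{x}^*}\leq R$, and apply the reverse triangle inequality to conclude $\norm{[\bm{x}_k]_g}\geq 2\delta_1-\delta_1=\delta_1>0$. No differences worth noting.
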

\begin{proof}
For any $g\in I^{\neq 0}(x^*)$, by the assumption of this lemma and the definition of $R$ as~\eqref{def:R} and $\delta_1$ in Assumption~\ref{assumption:delta}, we have that 
\begin{equation}
\begin{split}
\norm{[x^*]_g}-\norm{[x_k]_g}&\leq \norm{[x_k-x^*]_g}\leq \norm{x_k-x^*}\leq R\leq \delta_1\\
\norm{[x_k]_g}&\geq \norm{[x^*]_g}-\delta_1\geq 2\delta_1-\delta_1=\delta_1>0
\end{split}
\end{equation}
Hence $\norm{[x_k]_g}\neq 0$, \ie, $g\in \I^{\neq 0}(x_k)$. Therefore, $\I^{\neq 0}(x^*)\subseteq \I^{\neq 0}(x_k)$.
\end{proof}

The next lemma shows that if the distance between current iterate $x_k$ and $x^*$, \ie, $\norm{x_k-x^*}$ is sufficiently small, then $x^*$ inhabits the reduced space $\S_k:=\S(x_k)$.
\begin{lemma}\label{lemma:x_star_in_polyhedron} 
	Under Assumption~\ref{assumption}, if $0\leq \epsilon<\frac{\delta_1^2}{\delta_2}$,  $\norm{x_{k}-x^*}\leq R$, then for each $g\in\mathcal{I}^{\neq 0}(x^*)$, 
	\begin{equation}
	[x_{k}]_g^\top[x^*]_g\geq \epsilon\norm{[x_k]_g}^2
	\end{equation} 
	Consequently, it implies $x^*\in\S_k$ by the definition as~\eqref{def:polytope}.
\end{lemma}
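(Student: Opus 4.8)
The plan is to verify directly the two defining conditions of $\S_k=\S(x_k)$ in~\eqref{def:polytope} for the candidate point $z=x^*$. First I would dispose of the ``vanishing'' condition, namely $[x^*]_g=0$ for every $g\in\I^0(x_k)$. Since $\norm{x_k-x^*}\le R$, Lemma~\ref{lemma:support_cover} applies and gives $\I^{\neq 0}(x^*)\subseteq\I^{\neq 0}(x_k)$, equivalently $\I^{0}(x_k)\subseteq\I^{0}(x^*)$; hence $[x^*]_g=0$ on $\I^{0}(x_k)$, as required. The remaining and substantive task is the half-space condition $[x^*]_g^\top[x_k]_g\ge\epsilon\norm{[x_k]_g}^2$ on the nonzero groups, which on the support of $x^*$ is exactly the displayed inequality of the lemma.

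For the core estimate I would fix $g\in\I^{\neq 0}(x^*)$ and abbreviate $a:=[x_k]_g$ and $b:=[x^*]_g$. Three ingredients drive the bound: the per-group contraction $\norm{a-b}\le\norm{x_k-x^*}\le R$; the norm bounds $2\delta_1\le\norm{b}\le 2\delta_2$ from Assumption~\ref{assumption:delta}; and $R\le\delta_1$ from~\eqref{def:R}. Writing $a^\top b=\norm{b}^2+(a-b)^\top b$ and applying Cauchy--Schwarz yields $a^\top b\ge\norm{b}^2-R\norm{b}=\norm{b}(\norm{b}-R)$, a quantity increasing in $\norm{b}$ on $[2\delta_1,2\delta_2]$ (the bound $R\le\delta_1$ guarantees $\norm{b}\ge 2\delta_1>R/2$), so $a^\top b\ge 2\delta_1(2\delta_1-R)$. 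Dually, $\norm{a}\le\norm{b}+R\le 2\delta_2+R$, hence $\epsilon\norm{a}^2\le\epsilon(2\delta_2+R)^2$. It therefore suffices to establish $2\delta_1(2\delta_1-R)\ge\epsilon(2\delta_2+R)^2$.

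The final step is pure bookkeeping: this inequality rearranges to the quadratic $\epsilon R^2+(2\delta_1+4\epsilon\delta_2)R-\big(4\delta_1^2-4\epsilon\delta_2^2\big)\le 0$ in $R$, whose unique positive root matches the first entry of the minimum defining $R$ in~\eqref{def:R}; since $R$ is taken no larger than that root, the inequality holds, which secures the half-space condition on $\I^{\neq 0}(x^*)$ and, together with the vanishing condition above, gives $x^*\in\S_k$. The main obstacle I anticipate is precisely this quadratic reconciliation: one must confirm that the constant in~\eqref{def:R} is the relevant root and that it is real and strictly positive, which is exactly where the standing restriction $0\le\epsilon<\delta_1^2/\delta_2$ enters (cf.\ the remark after~\eqref{def:R}), keeping the constant term $4\delta_1^2-4\epsilon\delta_2^2$ of the correct sign so that a positive root exists. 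A secondary point deserving a line is the treatment of groups in $\I^{\neq 0}(x_k)\setminus\I^{\neq 0}(x^*)$, on which $[x^*]_g=0$: for $\epsilon=0$ the half-space condition reduces to $0\ge 0$ and is immediate, so the genuine content of the estimate is the support-restricted inequality produced above.
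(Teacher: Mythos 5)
Your argument is correct and follows essentially the same route as the paper's proof: the identical decomposition $[x_k]_g^\top[x^*]_g=\norm{[x^*]_g}^2+[x_k-x^*]_g^\top[x^*]_g$ followed by Cauchy--Schwarz, the two bounds $2\delta_1(2\delta_1-R)$ and $\epsilon(2\delta_2+R)^2$, and the recognition that the first entry in the minimum defining $R$ in~\eqref{def:R} is precisely the positive root of your quadratic $\epsilon R^2+(2\delta_1+4\epsilon\delta_2)R+4\epsilon\delta_2^2-4\delta_1^2\le 0$. The loose end you flag --- groups in $\I^{\neq 0}(x_k)\cap\I^{0}(x^*)$, where the membership condition for $\S_k$ reads $0\ge\epsilon\norm{[x_k]_g}^2$ and is only immediate when $\epsilon=0$ --- is equally unaddressed in the paper's own proof, which passes silently from the support-restricted inequality to the conclusion $x^*\in\S_k$, so your treatment is if anything slightly more careful than the original.
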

\begin{proof}
	It follows the assumption of this lemma and the definition of $R$ in~\eqref{def:R}, $\delta_1$ and $\delta_2$ in Assumption~\ref{assumption:delta} that for any  $g\in\mathcal{I}^{\neq 0}(x^*)$, 
	\begin{equation}
	\begin{split}
	\norm{[x_k]_g}\leq \norm{[x^*]_g} +R\leq 2\delta_2+R,
	\end{split}
	\end{equation}
	and the $\left[-(\delta_1+2\epsilon\delta_2)+\sqrt{(\delta_1+2\epsilon\delta_2)^2-4\epsilon^2\delta_2+4\epsilon\delta_1^2}\right]/\epsilon$ in~\eqref{def:R} is actually the solution of $\epsilon z^2+(4\epsilon\delta_2+2\delta_1)z+4\epsilon\delta_2^2-4\delta_1^2=0$ regarding $z\in \mathbb{R}^+$. Then we have that

	\begin{equation}
	\begin{split}
	[x_{k}]_g^\top[x^*]_g=&[x_{k}-x^*+x^*]_g^\top [x^*]_g\\
	=&[x_{k}-x^*]_g^\top[x^*]_g+\norm{[x^*]_g}^2\\
	\geq& \norm{[x^*]_g}^2-\norm{[x_k-x^*]_g}\norm{[x^*]_g}\\
	=& \norm{[x^*]_g}(\norm{[x^*]_g}-\norm{[x_k-x^*]_g})\\
	\geq & 2\delta_1(2\delta_1-R)\geq \epsilon (2\delta_2+R)^2\\
	\geq &\epsilon\norm{[x_k]_g}^2
	\end{split}
	\end{equation}
	holds for any $g\in\mathcal{I}^{\neq 0}(x^*)$, where the second last inequality holds because that $2\delta_1(2\delta_1-R)=\epsilon(2\delta_2+R)^2$ as $R=\left[-(\delta_1+2\epsilon\delta_2)+\sqrt{(\delta_1+2\epsilon\delta_2)^2-4\epsilon^2\delta_2+4\epsilon\delta_1^2}\right]/\epsilon$. Now combing with the definition of $\S_k$ as~\eqref{def:polytope}, we have $x^*$ inhabits $\S_k$, which completes the proof.
\end{proof}

Furthermore, if $\norm{x_k-x^*}$ is small enough and the step size is selected properly, every recovery of group sparsity by~\halfspacestep{} can be guaranteed as successful as stated in the following lemma.
\begin{lemma}\label{lemma.project_as_zero_group}
	Suppose $k\geq N_\P$,  $\norm{x_{k}-x^*}\leq R$, $0\leq \epsilon<\frac{2\delta_1-R}{2\delta_2+R}$ and $0<\alpha_k\leq\frac{2\delta_1-R-\epsilon(2\delta_2+R)}{M}$, then for any $g\in\hat{\G}_k={\I^{\neq 0}(x_k)}\bigcap \I^0(x_{k+1})$, $g$ must be in $\I^{0}(x^*)$, \ie, $g\in\I^0(x^*)$.
\end{lemma}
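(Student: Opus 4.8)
The plan is a proof by contradiction resting on a single geometric fact: a gradient step of length at most $\alpha_k M$ is too short to move a group $[x_k]_g$ that is genuinely bounded away from the origin across the half-space boundary defining the projection in~\eqref{def:proj}. Suppose, contrary to the claim, that some $g \in \hat{\G}_k = \I^{\neq 0}(x_k) \cap \I^0(x_{k+1})$ satisfies $g \notin \I^0(x^*)$, i.e. $g \in \I^{\neq 0}(x^*)$. First I would record a lower bound on $\norm{[x_k]_g}$: since $g \in \I^{\neq 0}(x^*)$, Assumption~\ref{assumption:delta} gives $\norm{[x^*]_g} \geq 2\delta_1$, and combined with $\norm{[x_k - x^*]_g} \leq \norm{x_k - x^*} \leq R$ the triangle inequality yields $\norm{[x_k]_g} \geq 2\delta_1 - R > 0$, exactly the estimate already used in Lemma~\ref{lemma:support_cover}.

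Next I would translate ``$g \in \hat{\G}_k$'' into an explicit inequality. By the group-projection rule in Algorithm~\ref{alg:main.x.halfspacestep}, group $g$ is zeroed precisely when the pre-projection trial point $[x_k - \alpha_k \Grad \Psi_{\B_k}(x_k)]_g$ falls outside the half-space, that is
\begin{equation}
[x_k - \alpha_k \Grad \Psi_{\B_k}(x_k)]_g^\top [x_k]_g < \epsilon \norm{[x_k]_g}^2 ,
\end{equation}
which rearranges to $(1-\epsilon)\norm{[x_k]_g}^2 < \alpha_k [\Grad \Psi_{\B_k}(x_k)]_g^\top [x_k]_g$. I would then bound the right-hand side from above: because $\norm{x_k - x^*} \leq R$ we have $x_k \in \widetilde{\mathcal{X}}$, so the mini-batch gradient obeys $\norm{\Grad \Psi_{\B_k}(x_k)} \leq M$, and Cauchy--Schwarz gives $[\Grad \Psi_{\B_k}(x_k)]_g^\top [x_k]_g \leq M \norm{[x_k]_g}$. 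Substituting and dividing by $\norm{[x_k]_g} > 0$ produces the necessary condition
\begin{equation}
(1-\epsilon)\norm{[x_k]_g} < \alpha_k M .
\end{equation}

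The contradiction then comes from the step-size hypothesis. Using $\norm{[x_k]_g} \geq 2\delta_1 - R$ together with $2\delta_1 - R \leq 2\delta_2 + R$ (which holds since $\delta_1 \leq \delta_2$ and $R \geq 0$), I would estimate
\begin{equation}
(1-\epsilon)\norm{[x_k]_g} \geq (1-\epsilon)(2\delta_1 - R) \geq 2\delta_1 - R - \epsilon(2\delta_2 + R) \geq \alpha_k M,
\end{equation}
where the last step is exactly the assumption $\alpha_k \leq \frac{2\delta_1 - R - \epsilon(2\delta_2 + R)}{M}$. This contradicts $(1-\epsilon)\norm{[x_k]_g} < \alpha_k M$, so no such $g$ can exist and every $g \in \hat{\G}_k$ must lie in $\I^0(x^*)$.

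The argument is mostly mechanical; the only genuinely load-bearing bookkeeping is the middle inequality in the last display, which is precisely why the threshold is written with $2\delta_2 + R$ rather than $2\delta_1 - R$ and is the step I would verify most carefully. The one technical caveat is to confirm that the constant $M$, defined as the supremum of $\norm{\partial \Psi}$ over $\widetilde{\mathcal{X}}$, also dominates the mini-batch (sub)gradient $\Grad \Psi_{\B_k}$ there (which is where $x_k \in \widetilde{\mathcal{X}}$ and $\norm{[x_k]_g} \geq 2\delta_1 - R \geq \delta_1$ are used so that the gradient is well defined and bounded); after that, everything reduces to the elementary chain above.
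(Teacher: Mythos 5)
Your proof is correct and follows essentially the same route as the paper's: a contradiction argument that lower-bounds $\norm{[x_k]_g}$ by $2\delta_1-R$ for a group in $\I^{\neq 0}(x^*)$, applies Cauchy--Schwarz with the gradient bound $M$ to the half-space test $[\tilde{x}_{k+1}]_g^\top[x_k]_g<\epsilon\norm{[x_k]_g}^2$, and invokes the step-size hypothesis to derive the contradiction. The only cosmetic difference is that you divide by $\norm{[x_k]_g}$ early and use $2\delta_1-R\leq 2\delta_2+R$, whereas the paper keeps the factor and uses the upper bound $\norm{[x_k]_g}\leq 2\delta_2+R$ in the final step; the two chains are algebraically equivalent, and your flagged caveat about $M$ bounding the mini-batch gradient is a gap the paper's own proof also glosses over.
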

\begin{proof}
To prove it by contradiction, suppose there exists some $g\in\hat{\G}_k$ such that $g\in \I^{\neq 0}(x^*)$. Since $g\in\hat{\G}_k={\I^{\neq 0}(x_k)}\bigcap \I^0(x_{k+1})$, then the group projection~\eqref{def:proj} is trigerred at $g$ such that
\begin{equation}\label{eq:hypothesis}
\begin{split}
[\tilde{x}_{k+1}]_{g}^\top[x_k]_{g}&=[x_k-\alpha\Grad \Psi_{\B_k}(x_k)]_{g}^\top[x_k]_{g}\\
&=\norm{[x_k]_{g}}^2-\alpha_k [\Grad \Psi_{\B_k}(x_k)]_{g}^\top [x_k]_{g}< \epsilon \norm{[x_k]_g}^2.
\end{split}
\end{equation}	
On the other hand, it follows the assumption of this lemma and $g\in\I^{\neq 0}(x^*)$ that 
\begin{equation}
 \norm{[x_{k}-x^*]_{g}}\leq \norm{x_k-x^*}\leq R 
\end{equation}
Combining the definition of $\delta_1$ and $\delta_2$, we have that 
\begin{equation}
\begin{split}
\norm{[x_k]_{g}}\geq \norm{[x^*]_{g}}-R\geq  2\delta_1 -R\\
\norm{[x_k]_{g}}\leq \norm{[x^*]_{g}}+R\leq  2\delta_2 +R\\
\end{split}
\end{equation}
It then follows $0<\alpha_k\leq\frac{2\delta_1-R-\epsilon(2\delta_2+R)}{M}$, where note $2\delta_1-R-\epsilon(2\delta_2+R)>0$ as $R\leq \delta_1$ and $\epsilon<\frac{2\delta_1-R}{2\delta_2+R}$, that 
\begin{equation}
\begin{split}
[\tilde{x}_{k+1}]_{g}^\top[x_k]_{g}&=\norm{[x_k]_{g}}^2-\alpha_k [\Grad \Psi_{\B_k}(x_k)]_{g}^\top [x_k]_{g}\\
&\geq \norm{[x_k]_{g}}^2-\alpha_k\norm{[\Grad \Psi_{\B_k}(x_k)]_g}\norm{[x_k]_g}\\
&= \norm{[x_k]_{g}}(\norm{[x_k]_{g}}-\alpha_k\norm{[\Grad \Psi_{\B_k}(x_k)]_g})\\
&\geq  \norm{[x_k]_{g}}(\norm{[x_k]_{g}}-\alpha_kM)\\
&\geq  \norm{[x_k]_{g}}\left[(2\delta_1-R)-\alpha_kM\right]\\
&\geq \norm{[x_k]_{g}}\left[(2\delta_1-R)-\frac{2\delta_1-R-\epsilon(2\delta_2+R)}{M}M\right]\\ 
&\geq \norm{[x_k]_{g}}\left[(2\delta_1-R)-2\delta_1+R+\epsilon(2\delta_2+R)\right]\\ 
&\geq \epsilon\norm{[x_k]_{g}}(2\delta_2+R)\\
&\geq \epsilon\norm{[x_k]_g}^2
\end{split}
\end{equation}
which contradicts with~\eqref{eq:hypothesis}. Hence, we conclude that any $g$ of variables projected to zero, \ie, $g\in\hat{\G}_k={\I^{\neq 0}(x_k)}\bigcap \I^0(x_{k+1})$ are exactly also the zeros on the optimal solution $x^*$, \ie, $g\in\I^{0}(x^*)$. 
\end{proof}

We next present that if the iterate of~\halfspacestep{} is close enough to the optimal solution $x^*$, then $x^*$ inhabits all  reduced spaces constructed by the subsequent iterates of~\halfspacestep{} with high probability. To establish this results, we require the below two lemmas. The first bounds the accumulated error because of random sampling.    

\begin{lemma}\label{lemma:convergence-series}
Given any $\theta > 1$, $K\geq N_\P$, let $k:=K+t$, $t\in\mathbb{Z}^+\bigcup \{0\}$, then there exists $\alpha_k=\O(1/t)$ and $|\B_k|=\O(t)$, such that for any $y_t\in \mathbb{R}^n$,  
\begin{align*}
    \max_{\{y_t\}_{t = 0}^{\infty} \in \mathcal{X}^{\infty}} \sum_{t = 0}^{\infty} \alpha_{k} \|e_{\mathcal{B}_{k}}(y_{t})\|_2 \leq \frac{3R^2}{8(4R + 1)}  
\end{align*}
holds with probability at least $1 - \frac{1}{\theta^2}$.
\end{lemma}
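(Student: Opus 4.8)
The plan is to read $e_{\mathcal{B}_k}(y)$ as the stochastic gradient sampling error $\nabla\Psi_{\mathcal{B}_k}(y)-\nabla\Psi(y)$, which is zero-mean because $\mathcal{B}_k$ is drawn uniformly, and whose second moment decays like $1/|\mathcal{B}_k|$ by the standard variance-of-the-mean computation. First I would dispose of the adversarial maximization over sequences. Since the summand at index $t$ depends only on $y_t$ and $\mathcal{X}^\infty$ is a product set, the worst-case sequence maximizes each term separately, so pathwise
\begin{equation*}
\max_{\{y_t\}\in\mathcal{X}^\infty}\ \sum_{t=0}^\infty \alpha_k\,\|e_{\mathcal{B}_k}(y_t)\|_2
= \sum_{t=0}^\infty \alpha_k\,\sup_{y\in\mathcal{X}}\|e_{\mathcal{B}_k}(y)\|_2 =: S.
\end{equation*}
This reduces the statement to a tail bound on the single nonnegative random variable $S$, and crucially removes the dependence of the chosen $y_t$ on the realized mini-batch.

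Next I would bound $\E[S]$ by linearity, $\E[S]=\sum_{t}\alpha_k\,\E\big[\sup_{y\in\mathcal{X}}\|e_{\mathcal{B}_k}(y)\|_2\big]$. The key ingredient is a \emph{uniform} (over $y\in\mathcal{X}$) control of the sampling error. Using that $\mathcal{X}$ is compact, that each $\nabla\Psi_i$ is Lipschitz on $\mathcal{X}$ (Assumption~\ref{assumption}), and that $\|\partial\Psi\|\le M$ there, a covering argument upgrades the pointwise variance estimate $\E\|e_{\mathcal{B}_k}(y)\|_2^2=\mathcal{O}(1/|\mathcal{B}_k|)$ to the uniform bound $\E\big[\sup_{y\in\mathcal{X}}\|e_{\mathcal{B}_k}(y)\|_2\big]\le C/\sqrt{|\mathcal{B}_k|}$, with $C$ depending on $M$, $L$, $n$, and the diameter of $\mathcal{X}$. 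I expect this uniform deviation step to be the main obstacle: the pointwise variance bound is immediate, but passing to the supremum at the correct $1/\sqrt{|\mathcal{B}_k|}$ rate is exactly where compactness and the Lipschitz gradient assumption are needed.

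With this in hand, substituting $\alpha_k=\mathcal{O}(1/t)$ and $|\mathcal{B}_k|=\mathcal{O}(t)$ (writing, say, $\alpha_k=c_\alpha/(t+1)$ and $|\mathcal{B}_k|=c_{\mathcal{B}}(t+1)$ to absorb the $t=0$ term) yields a summand of order $\alpha_k/\sqrt{|\mathcal{B}_k|}=\mathcal{O}\big((t+1)^{-3/2}\big)$, so that $\E[S]\le \frac{C\,c_\alpha}{\sqrt{c_{\mathcal{B}}}}\sum_{t\ge 0}(t+1)^{-3/2}$, a finite quantity since the series converges. Because the step-size and batch-size constants are free, I would then shrink $c_\alpha$ (equivalently enlarge $c_{\mathcal{B}}$), depending on $\theta$ and $R$, until $\E[S]\le \frac{3R^2}{8\theta^2(4R+1)}$.

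Finally I would invoke Markov's inequality on the nonnegative $S$:
\begin{equation*}
\mathbb{P}\Big(S>\tfrac{3R^2}{8(4R+1)}\Big)\ \le\ \frac{\E[S]}{\,3R^2/\big(8(4R+1)\big)\,}\ \le\ \frac{1}{\theta^2},
\end{equation*}
which is precisely the assertion that $S\le \frac{3R^2}{8(4R+1)}$ holds with probability at least $1-1/\theta^2$. Note that independence of the mini-batches across iterations is not required for this argument, since Markov's inequality uses only $\E[S]$; independence would merely permit a sharper (Chebyshev-type) constant if one were desired.
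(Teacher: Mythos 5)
Your proof is essentially correct but takes a genuinely different route from the paper's. The paper fixes an arbitrary deterministic sequence $\{y_t\}$, notes that the resulting $Y_t := \alpha_{K+t}\|e_{\mathcal{B}_{K+t}}(y_t)\|$ are then independent, applies Chebyshev's inequality to $Y=\sum_t Y_t$ with the pointwise moment bounds $\mathbb{E}[Y_t]\le \alpha_{K+t}\sigma/\sqrt{|\mathcal{B}_{K+t}|}$ and $\mathrm{Var}[Y_t]\le\alpha_{K+t}^2\sigma^2/|\mathcal{B}_{K+t}|$, obtains the same convergent $\O(t^{-3/2})$ series you do, and only at the very end asserts that ``the proof holds for any given sequence, thus the max satisfies the bound.'' You instead commute the maximum with the sum pathwise (legitimate, since the terms are nonnegative and decoupled across $t$), reduce to the single nonnegative variable $S=\sum_t\alpha_k\sup_{y\in\mathcal{X}}\|e_{\mathcal{B}_k}(y)\|$, bound $\mathbb{E}[S]$ via a uniform-in-$y$ deviation estimate, and finish with Markov. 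Each route buys something: the paper's needs only the trivial pointwise variance bound, but its last step --- upgrading a per-sequence Chebyshev event to a statement about the supremum over uncountably many sequences --- is precisely the gap that your pathwise exchange closes cleanly; conversely, you pay for that rigor with the covering/chaining argument behind $\mathbb{E}\bigl[\sup_y\|e_{\mathcal{B}_k}(y)\|\bigr]\le C/\sqrt{|\mathcal{B}_k|}$, which you correctly flag as the real work (note also that the set $\mathcal{X}$ of \eqref{def:mathcal_X} is closed but not bounded, so you should intersect it with $\widetilde{\mathcal{X}}$ before invoking compactness). Your remark that Markov dispenses with independence is apt: in the actual application (Lemma~\ref{lemma:k_plus_1_optimal_dist_non_increase}) the $y_t$ are iterates depending on past mini-batches, which is exactly why uniformity over sequences is needed and why the paper's independence claim is the delicate point.
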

\begin{proof}
Define random variable $Y_t := \alpha_{K + t} \|e_{\mathcal{B}_{K + t}}(y_{t})\|_2$ for all $t \geq 0$. Since $\{y_t\}_{t = 0}^{\infty}$ are arbitrarily chosen, then the random variables $\{Y_t\}_{t = 0}^{\infty}$ are independent. Let $Y := \sum_{t= 0}^{\infty} Y_t$. Using Chebshev's inequality, we obtain
\begin{align}\label{eq:chevshev_inequality}
    \mathbb{P}\left( Y \geq \mathbb{E}[Y] + \theta \sqrt{\text{Var}[Y]} \right) \leq \mathbb{P}\left( |Y - \mathbb{E}[Y]| \geq \theta \sqrt{\text{Var}[Y]} \right) \leq \frac{1}{\theta^2}. 
\end{align}
And based on the Assumption~\ref{assumption}, there exists an upper bound $\sigma^2>0$ for the variance of random noise $e(x)$ generated from the one-point mini-batch, \ie, $\mathcal{B}=\{i\}, i = 1,\ldots, N$. Consequently, for each $t \geq 0$, we have $\mathbb{E}[Y_t] \leq \frac{\alpha_{K + t} \sigma}{\sqrt{|\mathcal{B}_{K + t}|}}$ and $\text{Var}[Y_t] \leq \frac{\alpha_{K + t}^2 \sigma^2}{|\mathcal{B}_{K + t}|}$, then combining with~\eqref{eq:chevshev_inequality}, we have
\begin{align}
    Y &\leq \mathbb{E}[Y] + \theta \sqrt{\text{Var}[Y]} \\
    &\leq \sum_{t = 0}^{\infty} \frac{\alpha_{K + t} \sigma}{\sqrt{|\mathcal{B}_{k + t}|}} + \theta \cdot \sum_{t = 0}^{\infty} \frac{\alpha_{K + t}^2 \sigma^2}{|\mathcal{B}_{K + t}|}\\
    &\leq \sum_{t = 0}^{\infty} \frac{\alpha_{K + t} \sigma}{\sqrt{|\mathcal{B}_{k + t}|}} + \theta \cdot \sum_{t = 0}^{\infty} \frac{\alpha_{K + t} \sigma}{\sqrt{|\mathcal{B}_{K + t}|}} =(1+\theta)\sum_{t = 0}^{\infty} \frac{\alpha_{K + t} \sigma}{\sqrt{|\mathcal{B}_{K + t}|}}
\end{align}
holds with probability at least $1 - \frac{1}{\theta^2}$. Here, for the second inequality, we use the property that the equality $\mathbb{E}[\sum_{t = 0}^{\infty} Y_i] = \sum_{t = 0}^{\infty} \mathbb{E}[ Y_i]$ holds whenever $\sum_{t = 0}^{\infty} \mathbb{E}[|Y_i|]$ convergences, see Section 2.1 in \cite{mitzenmacher2005probability}; and for the third inequality, we use $\frac{\alpha_{K + t} \sigma}{\sqrt{|\mathcal{B}_{K + t}|}}\leq 1$ without loss of generality as the common setting of large mini-batch size and small step size.  

Given any $\theta > 1$, there exists some $\alpha_{k}=\O(1/t)$ and $|\B_{k}|=\O(t)$, the above series converges and satisfies that 
\begin{align*}
    (1+\theta)\sum_{t = 0}^{\infty} \frac{\alpha_{K + t} \sigma}{\sqrt{|\mathcal{B}_{K + t}|}} \leq \frac{3R^2}{8(4R + 1)}
\end{align*}
holds. Notice that the above proof holds for any given sequence $\{y_t\}_{t = 0}^{\infty} \in \mathcal{X}^{\infty}$, thus  
\begin{align*}
    \max_{\{y_t\}_{t = 0}^{\infty} \in \mathcal{X}^{\infty}} \sum_{t = 0}^{\infty} \alpha_{k} \|e_{\mathcal{B}_{k}}(y_{t})\|_2 \leq \frac{3R^2}{8(4R + 1)}
\end{align*}
holds with probability at least $1 - \frac{1}{\theta^2}$. 
\end{proof}

The second lemma draws if previous iterate of~\halfspacestep{} falls into the neighbor of $x^*$, then under appropriate step size and mini-batch setting, the current iterate also inhabits the neighbor with high probability. 

\begin{lemma}\label{lemma:k_plus_1_optimal_dist_non_increase}
	Under the assumptions of Lemma~\ref{lemma:convergence-series}, suppose $\norm{x_{K}-x^*}\leq R/2$; for any $\ell$ satisfying $K\leq \ell<K+ t$, $0<\alpha_{\ell}\leq \min\{\frac{1}{L},\frac{2\delta_1-R-\epsilon(2\delta_2+R)}{M}\}$, $|B_\ell|\geq N-\frac{N}{2M}$ and $\norm{x_{\ell}-x^*}\leq R$ holds, then 
	\begin{equation}
	\norm{x_{K+t}-x^*}\leq R.
	\end{equation}
	holds with probability at least $1-\frac{1}{\theta^2}$.
\end{lemma}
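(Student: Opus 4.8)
The plan is to reduce the statement to a one-step, almost-nonexpansive recursion for the squared distance $\norm{x_\ell-x^*}^2$ and then telescope from $\ell=K$ to $\ell=K+t-1$, feeding the accumulated sampling error into Lemma~\ref{lemma:convergence-series}. Under the hypotheses we have $\norm{x_\ell-x^*}\leq R$ for every $K\leq\ell<K+t$ and $\norm{x_K-x^*}\leq R/2$, so each iterate lies in $\widetilde{\mathcal{X}}$. Consequently Lemma~\ref{lemma:support_cover} gives $\I^0(x_\ell)\subseteq\I^0(x^*)$, Lemma~\ref{lemma:x_star_in_polyhedron} gives $x^*\in\S_\ell$, Lemma~\ref{lemma.project_as_zero_group} gives $\hat{\G}_\ell\subseteq\I^0(x^*)$, and every (sub)gradient is bounded by $M$. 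Since $\hat{\G}_\ell,\I^0(x_\ell)\subseteq\I^0(x^*)$ and $\hat{\G}_\ell\cap\I^{\neq0}(x^*)=\varnothing$, it follows that $x^*$ is supported on $\tilde{\G}_\ell$. The target arithmetic is $\norm{x_K-x^*}^2+\tfrac34R^2\leq\tfrac14R^2+\tfrac34R^2=R^2$, so it suffices to make the total increment at most $\tfrac34R^2$.

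\textbf{Key reduction via projection nonexpansiveness.} Let $V_\ell:=\{z\in\R^n:[z]_g=0\ \text{for all}\ g\notin\tilde{\G}_\ell\}$ and let $P_\ell$ be the orthogonal (coordinate-zeroing) projection onto $V_\ell$. Inspecting Algorithm~\ref{alg:main.x.halfspacestep} groupwise shows that the update is exactly $x_{\ell+1}=P_\ell\!\left(x_\ell-\alpha_\ell\Grad\Psi_{\B_\ell}(x_\ell)\right)$, where the stochastic gradient is extended by zero on $\I^0(x_\ell)$: on $\tilde{\G}_\ell$ the trial point is kept, while on $\hat{\G}_\ell\cup\I^0(x_\ell)$ (all outside $\tilde{\G}_\ell$) the iterate is zero, which is precisely what $P_\ell$ enforces. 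Because $x^*\in V_\ell$ so that $P_\ell x^*=x^*$, and orthogonal projection onto a subspace is nonexpansive, I obtain
\[
\norm{x_{\ell+1}-x^*}\ \leq\ \norm{\left(x_\ell-\alpha_\ell\Grad\Psi_{\B_\ell}(x_\ell)\right)-x^*}.
\]
This collapses the group bookkeeping into the analysis of a single full-space gradient step.

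\textbf{One-step bound and telescoping.} Writing $\Grad\Psi_{\B_\ell}(x_\ell)=\Grad\Psi(x_\ell)+e_{\B_\ell}(x_\ell)$ and expanding the square, the cross term with the true gradient, $\Grad\Psi(x_\ell)^\top(x_\ell-x^*)$, is nonnegative by convexity of $f$ (hence of $\Psi$) on $\widetilde{\mathcal{X}}$; together with $\alpha_\ell\leq1/L$ this is what makes the deterministic part of the step nonexpansive toward $x^*$. The noise enters only through the cross term $-2\alpha_\ell\,e_{\B_\ell}(x_\ell)^\top(x_\ell-x^*)$, bounded by $2R\,\alpha_\ell\norm{e_{\B_\ell}(x_\ell)}$ via Cauchy--Schwarz and $\norm{x_\ell-x^*}\leq R$, and through the quadratic term, linearized using $\alpha_\ell\norm{e_{\B_\ell}(x_\ell)}\leq1$ as in Lemma~\ref{lemma:convergence-series}. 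Collecting these yields a recursion of the form $\norm{x_{\ell+1}-x^*}^2\leq\norm{x_\ell-x^*}^2+2(4R+1)\,\alpha_\ell\norm{e_{\B_\ell}(x_\ell)}$. Summing over $\ell=K,\dots,K+t-1$ and invoking Lemma~\ref{lemma:convergence-series} to bound $\sum_\ell\alpha_\ell\norm{e_{\B_\ell}(x_\ell)}\leq\frac{3R^2}{8(4R+1)}$ on an event of probability at least $1-\tfrac1{\theta^2}$, the total increment is at most $2(4R+1)\cdot\frac{3R^2}{8(4R+1)}=\tfrac34R^2$. Combined with $\norm{x_K-x^*}^2\leq\tfrac14R^2$ this gives $\norm{x_{K+t}-x^*}^2\leq R^2$ on the same event, as claimed.

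\textbf{Anticipated main obstacle.} The delicate point is absorbing the deterministic quadratic term $\alpha_\ell^2\norm{\Grad\Psi(x_\ell)}^2$ so that the increment is purely a multiple of $\alpha_\ell\norm{e_{\B_\ell}(x_\ell)}$; plain convexity only kills the linear term, and the stronger co-coercivity needed here requires $\Grad\Psi(x^*)=0$ on the surviving groups $\tilde{\G}_\ell$. For $g\in\I^{\neq0}(x^*)$ this holds by first-order optimality, and for $g\in\hat{\G}_\ell$ the projection contributes the strictly negative $-\norm{[x_\ell]_g}^2$. The genuinely awkward case is a group that survives the step yet is optimally zero, $g\in\tilde{\G}_\ell\cap\I^0(x^*)$, where $\Psi$ is nonsmooth at $x^*$; handling it forces me to exploit the strict complementarity of Assumption~\ref{assumption} (the $\delta_3$ gap) together with the Lipschitz bound to show $[\Grad\Psi_{\B_\ell}(x_\ell)]_g^\top[x_\ell]_g$ is large enough that such a group's net contribution is again dominated by the sampling noise. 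Pinning this down, and verifying that the resulting constant is exactly $2(4R+1)$ so that it meshes with the budget of Lemma~\ref{lemma:convergence-series} (and that the step-size and batch-size conditions $\alpha_\ell\leq\min\{1/L,\tfrac{2\delta_1-R-\epsilon(2\delta_2+R)}{M}\}$ and $|\B_\ell|\geq N-\tfrac{N}{2M}$ guarantee the requisite signs), is where the real work lies.
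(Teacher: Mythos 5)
Your proposal is correct and follows essentially the same route as the paper's proof: a one-step recursion of the form $\norm{x_{\ell+1}-x^*}^2\leq\norm{x_\ell-x^*}^2+\mathrm{const}\cdot\alpha_\ell\norm{e_{\B_\ell}(x_\ell)}$ obtained by separating the groups in $\tilde{\G}_\ell$ from those in $\hat{\G}_\ell$ (your projection-nonexpansiveness framing is an equivalent packaging of the paper's direct observation that $[x_{\ell+1}-x^*]_g=[x^*]_g=0$ on $\hat{\G}_\ell$ via Lemma~\ref{lemma.project_as_zero_group}), followed by telescoping and Lemma~\ref{lemma:convergence-series} to cap the accumulated increment at $\tfrac{3}{4}R^2$ and conclude $\tfrac{R^2}{4}+\tfrac{3R^2}{4}\leq R^2$. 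The co-coercivity issue you single out as the main obstacle is genuine but is not actually resolved in the paper either: its proof simply applies the groupwise bound $[x_\ell-x^*]_g^\top[\Grad\Psi(x_\ell)]_g\geq\tfrac{1}{L}\norm{[\Grad\Psi(x_\ell)]_g}^2$ across all of $\tilde{\G}_\ell$, implicitly treating $[\Grad\Psi(x^*)]_g$ as zero even on $\tilde{\G}_\ell\cap\I^0(x^*)$ where $\Omega$ is nonsmooth, so the strict-complementarity repair you sketch goes beyond what the source supplies rather than falling short of it.
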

\begin{proof}
It follows the assumptions of this lemma, Lemma~\ref{lemma.project_as_zero_group}  that for any $\ell$ satisfying $K\leq \ell<K+ t$
\begin{equation}
\norm{[x^*]_g}=0,\ \text{for any}\ g\in \hat{\mathcal{G}}_{\ell}.
\end{equation}
Hence we have that for $K\leq \ell<K+ t$,
\begin{equation}\label{eq:optimal_dist_ell}
\begin{split}
&\norm{x_{\ell+1}-x^*}^2\\
=&\sum_{g\in\tilde{\mathcal{G}}_\ell}\norm{[x_{\ell}-x^*-\alpha_\ell\Grad \Psi(x_{\ell})-\alpha_\ell e_{\B_\ell}(x_\ell)]_g}^2+\sum_{g\in\hat{\mathcal{G}}_k}\norm{[x_{\ell}-x^*-x_{\ell}]_g}^2\\
=&\sum_{g\in\tilde{\mathcal{G}}_\ell}\left\{\norm{[x_{\ell}-x^*]_g}^2-2\alpha_\ell[x_{\ell}-x^*]_g^\top[\Grad \Psi(x_{\ell})+e_{\B_\ell}(x_\ell)]_g+\alpha_\ell^2\norm{[\Grad \Psi(x_{\ell})+e_{\B_\ell}(x_\ell)]_g}^2\right\}+\sum_{g\in\hat{\G}_\ell}\norm{[x^*]_g}^2\\
=&\sum_{g\in\tilde{\mathcal{G}}_\ell}\left\{\norm{[x_{\ell}-x^*]_g}^2-2\alpha_\ell[x_{\ell}-x^*]_g^\top[\Grad \Psi(x_{\ell})]_g-2\alpha_\ell[x_{\ell}-x^*]_g^\top[e_{\B_\ell}(x_\ell)]_g+\alpha_\ell^2\norm{[\Grad \Psi(x_{\ell})+e_{\B_\ell}(x_\ell)]_g}^2\right\}\\
\leq&\sum_{g\in\tilde{\G}_\ell}\norm{[x_{\ell}-x^*]_g}^2-\norm{[\Grad \Psi(x_{\ell})]_g}^2\left(2\frac{\alpha_\ell}{L}-\alpha_\ell^2\right)-2\alpha_\ell[x_{\ell}-x^*]_g^\top[e_{\B_\ell}(x_\ell)]_g+\alpha_\ell^2\norm{[e_{\B_\ell}(x_\ell)]_g}^2\\
&+2\alpha_\ell^2[\Grad \Psi(x_\ell)]_g^\top[e_{\B_\ell}(x_\ell)]_g\\
\leq& \sum_{g\in\tilde{\G}_\ell}\norm{[x_{\ell}-x^*]_g}^2-\norm{[\Grad \Psi(x_{\ell})]_g}^2\left(2\frac{\alpha_\ell}{L}-\alpha_\ell^2\right) +2\alpha_\ell\norm{[x_{\ell}-x^*]_g}\norm{[e_{\B_\ell}(x_\ell)]_g}+\alpha_\ell^2\norm{[e_{\B_\ell}(x_\ell)]_g}^2\\
&+2\alpha_\ell^2\norm{[\Grad \Psi(x_\ell)]_g}\norm{[e_{\B_\ell}(x_\ell)]_g}\\
\leq& \sum_{g\in\tilde{\G}_\ell}\norm{[x_{\ell}-x^*]_g}^2-\norm{[\Grad \Psi(x_{\ell})]_g}^2\left(2\frac{\alpha_\ell}{L}-\alpha_\ell^2\right) +(2\alpha_\ell+2\alpha_\ell^2L)\norm{[x_{k}-x^*]_g}\norm{[e_{\B_\ell}(x_\ell)]_g}+\alpha_\ell^2\norm{[e_{\B_\ell}(x_\ell)]_g}^2\\
\leq& \sum_{g\in\tilde{\G}_\ell}\left\{\norm{[x_{\ell}-x^*]_g}^2-\norm{[\Grad \Psi(x_{\ell})]_g}^2\left(2\frac{\alpha_\ell}{L}-\alpha_\ell^2\right)\right\} +(2\alpha_\ell+2\alpha_\ell^2L)\norm{x_{\ell}-x^*}\norm{e_{\B_\ell}(x_\ell)}+\alpha_\ell^2\norm{e_{\B_\ell}(x_\ell)}^2
\end{split}
\end{equation}	

On the other hand, by the definition of $e_{\B}(x)$, we have that 
\begin{equation}\label{eq:error_eq}
\begin{split}
e_{\B}(x)=&[\Grad \Psi_{\B}(x)-\Grad \Psi(x)]_{\I^{\neq 0}(x)}=[\Grad f_{\B}(x)-\Grad f(x)]_{\I^{\neq 0}(x)}\\
=&\frac{1}{|\B|}\sum_{j\in \B}[\Grad f_{j}(x)]_{\I^{\neq 0}(x)}-\frac{1}{N}\sum_{i=1}^N [\Grad f_i(x)]_{\I^{\neq 0}(x)}\\
=&\frac{1}{N}\sum_{j\in\B}\left[\frac{N}{|\B|}[\Grad f_{j}(x)]_{\I^{\neq 0}(x)}-[\Grad f_j(x)]_{\I^{\neq 0}(x)}\right]-\frac{1}{N}\sum_{\substack{i=1\\ i\notin \B}}^N[\Grad f_i(x)]_{\I^{\neq 0}(x)}\\
=&\frac{1}{N}\sum_{j\in\B}\left[\frac{N-|\B|}{|\B|}[\Grad f_{j}(x)]_{\I^{\neq 0}(x)}\right]-\frac{1}{N}\sum_{\substack{i=1\\ i\notin \B}}^N[\Grad f_i(x)]_{\I^{\neq 0}(x)}\\
\end{split}
\end{equation}
Thus taking the norm on both side of~\eqref{eq:error_eq} and using triangle inequality results in the following:
\begin{equation}\label{eq:bound_error}
\begin{split}
\norm{e_\B(x)}&\leq \frac{1}{N}\sum_{j\in\B}\left[\frac{N-|\B|}{|\B|}\norm{[\Grad f_{j}(x)]_{\I^{\neq 0}(x)}}\right]+\frac{1}{N}\sum_{\substack{i=1\\ i\notin \B}}^N\norm{[\Grad f_i(x)]_{\I^{\neq 0}(x)}}\\
&\leq \frac{1}{N} \frac{N-|\B|}{|\B|} |\B_k| M + \frac{1}{N} (N-|\B|)M\leq \frac{2(N-|\B|)M}{N}.
\end{split}
\end{equation}

Since $\alpha_{\ell}\leq 1$, and $|B_\ell|\geq N-\frac{N}{2M}$ hence $\alpha_{\ell}\norm{e_{\B_{\ell}}(x_{\ell})}\leq 1$. Then combining with $\alpha_\ell\leq 1/L$,~\eqref{eq:optimal_dist_ell} can be further simplified as 
\begin{equation}\label{eq:x_kp1_optimal_dist_2}
\begin{split}
&\norm{x_{\ell+1}-x^*}^2\\
\leq & \sum_{g\in\tilde{\G}_\ell}\left\{\norm{[x_{\ell}-x^*]_g}^2-\norm{[\Grad \Psi(x_{\ell})]_g}^2\left(2\frac{\alpha_\ell}{L}-\alpha_\ell^2\right)\right\} +(2\alpha_\ell+2\alpha_\ell^2L)\norm{x_{\ell}-x^*}\norm{e_{\B_\ell}(x_\ell)}+\alpha_\ell^2\norm{e_{\B_\ell}(x_\ell)}^2\\
\leq & \sum_{g\in\tilde{\G}_\ell}\left\{\norm{[x_{\ell}-x^*]_g}^2-\frac{1}{L^2}\norm{[\Grad \Psi(x_{\ell})]_g}^2\right\}+4\alpha_\ell\norm{x_{\ell}-x^*}\norm{e_{\B_\ell}(x_\ell)}+\alpha_\ell^2\norm{e_{\B_\ell}(x_\ell)}^2\\
\leq &\norm{x_\ell-x^*}^2+4\alpha_\ell\norm{x_{\ell}-x^*}\norm{e_{\B_\ell}(x_\ell)}+\alpha_\ell\norm{e_{\B_\ell}(x_\ell)}
\end{split}
\end{equation}
Following from the assumption that $\norm{x_{\ell}-x^*}\leq R$, then~\eqref{eq:x_kp1_optimal_dist_2} can be further simplified as 
\begin{equation}\label{eq:x_kp1_optimal_dist_3}
\begin{split}
\norm{x_{\ell+1}-x^*}^2\leq & \norm{x_\ell-x^*}^2+4\alpha_\ell R\norm{e_{\B_\ell}(x_\ell)}+\alpha_k\norm{e_{\B_\ell}(x_\ell)}\\
\leq & \norm{x_\ell-x^*}^2+(4R+1)\alpha_\ell\norm{e_{\B_\ell}(x_\ell)}
\end{split}    
\end{equation}
Summing the the both side of~\eqref{eq:x_kp1_optimal_dist_3} from $\ell=K$ to $\ell=K+t-1$ results in  
\begin{equation}
\begin{split}
\norm{x_{K+t}-x^*}^2\leq \norm{x_K-x^*}^2+(4R+1)\sum_{\ell=K}^{K+t-1}\alpha_{\ell}\norm{e_{\B_{\ell}}(x_{\ell})}\\
\end{split}
\end{equation}
It follows Lemma~\ref{lemma:convergence-series} that the followng holds with probability at least $1-\frac{1}{\theta^2}$,
\begin{equation}
\sum_{\ell = K}^{\infty} \alpha_{\ell} \|e_{\mathcal{B}_{\ell}}(x_{\ell})\| \leq \frac{3R^2}{4(4R + 1)}.
\end{equation}
Thus we have that 
\begin{equation}
\begin{split}
\norm{x_{K+t}-x^*}^2&\leq \norm{x_K-x^*}^2+\left(4R+1\right)\sum_{\ell=K}^{K+t-1}\alpha_{\ell}\norm{e_{\B_{\ell}}(x_{\ell})}\\
&\leq \norm{x_K-x^*}^2+\left(4R+1\right)\sum_{\ell = K}^{\infty} \alpha_{\ell} \|e_{\mathcal{B}_{\ell}}(x_{\ell})\|\\
&\leq \frac{R^2}{4}+(4R+1)\frac{3R^2}{4(4R+1)}\leq \frac{R^2}{4}+\frac{3R^2}{4}\leq R^2,
\end{split}
\end{equation}
holds with probability at least $1-\frac{1}{\theta^2}$, which completes the proof. 

\end{proof}

Based on the above lemmas, the Lemma~\ref{lemma:x_k_in_neghibors} below shows if initial iterate of~\halfspacestep{} locates closely enough to $x^*$, step size $\alpha_k$ polynomially decreases, and mini-batch size $\B_k$ polynomially increases, then $x^*$ inhabits all subsequent  reduced space $\{\S_k\}_{k=K}^{\infty}$ constructed in~\halfspacestep{} with high probability. 

\begin{lemma}\label{lemma:x_k_in_neghibors}
	Suppose $\norm{x_{K}-x^*}\leq \frac{R}{2}$, $K\geq N_\P$, $k=K+t$, $t\in\mathbb{Z}^+$, $0<\alpha_k=\O(1/(\sqrt{N}t))\leq \min\{\frac{2(1-\epsilon)}{L}, \frac{1}{L},\frac{2\delta_1-R-\epsilon(2\delta_2+R)}{M}\} $ and $|\B_k|=\O(t)\geq N-\frac{N}{2M}$. Then for any constant $\tau\in (0,1)$, $\norm{x_k-x^*}\leq R$ with probability at least $1-\tau$ for any $k\geq K$. 
\end{lemma}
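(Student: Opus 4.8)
The plan is to obtain Lemma~\ref{lemma:x_k_in_neghibors} from Lemma~\ref{lemma:k_plus_1_optimal_dist_non_increase} by closing the latter's self-referential hypothesis through induction on $t$, and to convert its failure probability $1/\theta^2$ into the prescribed tolerance $\tau$. The decisive feature is that Lemma~\ref{lemma:convergence-series} bounds the accumulated sampling error as a maximum over \emph{all} admissible sequences in $\mathcal{X}^\infty$, so the event
\begin{equation*}
E:=\left\{\sum_{\ell=K}^{\infty}\alpha_\ell\norm{e_{\B_\ell}(x_\ell)}\leq \frac{3R^2}{4(4R+1)}\right\}
\end{equation*}
is independent of the realized trajectory $\{x_\ell\}$. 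First I would set $\theta:=1/\sqrt{\tau}$, which obeys $\theta>1$ since $\tau\in(0,1)$; then Lemma~\ref{lemma:convergence-series} (applicable because $\alpha_k=\O(1/(\sqrt{N}t))$ is also $\O(1/t)$ and $|\B_k|=\O(t)$) yields $\mathbb{P}(E)\geq 1-1/\theta^2=1-\tau$. Since $E$ controls the error uniformly over every possible realization, everything that follows is carried out deterministically on $E$, and no union bound over the infinitely many indices $k\geq K$ is required.

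On $E$, I would prove $\norm{x_{K+t}-x^*}\leq R$ for all $t\geq 0$ by induction on $t$. The base case $t=0$ holds since $\norm{x_K-x^*}\leq R/2\leq R$. For the inductive step, assume $\norm{x_{K+s}-x^*}\leq R$ for every $0\leq s\leq t-1$. Under $\alpha_\ell\leq\min\{1/L,\,(2\delta_1-R-\epsilon(2\delta_2+R))/M\}$ and $|\B_\ell|\geq N-N/(2M)$, the induction hypothesis lets Lemma~\ref{lemma:support_cover} keep each group on the support of $x^*$ bounded below by $\delta_1$ (the Lipschitz condition defining $\mathcal{X}$), while Lemma~\ref{lemma.project_as_zero_group} certifies that any group the \halfspacestep{} sends to zero is genuinely zero at $x^*$. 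These are precisely the hypotheses under which the one-step estimate established inside the proof of Lemma~\ref{lemma:k_plus_1_optimal_dist_non_increase} is valid, namely
\begin{equation*}
\norm{x_{\ell+1}-x^*}^2\leq \norm{x_\ell-x^*}^2+(4R+1)\,\alpha_\ell\norm{e_{\B_\ell}(x_\ell)}.
\end{equation*}
Telescoping this from $\ell=K$ to $\ell=K+t-1$ and then invoking $E$ gives
\begin{equation*}
\norm{x_{K+t}-x^*}^2\leq \frac{R^2}{4}+(4R+1)\sum_{\ell=K}^{K+t-1}\alpha_\ell\norm{e_{\B_\ell}(x_\ell)}\leq \frac{R^2}{4}+\frac{3R^2}{4}=R^2,
\end{equation*}
which closes the induction and forces $\norm{x_{K+t}-x^*}\leq R$.

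The step-by-step contraction is inherited essentially verbatim from Lemma~\ref{lemma:k_plus_1_optimal_dist_non_increase}, so the main obstacle I expect is the bookkeeping that keeps its preconditions valid at \emph{every} index visited by the induction rather than only at a single terminal index. Concretely, one must verify at each $\ell$ that $x_\ell\in\mathcal{X}$ (so $\Grad\Psi_{\B_\ell}$ is $L$-Lipschitz and the descent lemma applies) and that the projection zeroes only true-zero groups of $x^*$; both follow from threading $\norm{x_\ell-x^*}\leq R$ through Lemmas~\ref{lemma:support_cover} and~\ref{lemma.project_as_zero_group}, together with admissibility of $\alpha_\ell$ and $|\B_\ell|$. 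Because these verifications and the telescoping bound all live on the single trajectory-independent event $E$, the conclusion $\norm{x_k-x^*}\leq R$ holds simultaneously for all $k\geq K$ with probability at least $1-\tau$. The polynomial schedules $\alpha_k=\O(1/(\sqrt{N}t))$ and $|\B_k|=\O(t)$ matter only through Lemma~\ref{lemma:convergence-series}, where they guarantee the series defining $E$ converges below its threshold.
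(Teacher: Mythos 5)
Your proposal is correct and rests on the same analytical core as the paper's argument---the one-step recursion $\norm{x_{\ell+1}-x^*}^2\leq\norm{x_{\ell}-x^*}^2+(4R+1)\alpha_\ell\norm{e_{\B_\ell}(x_\ell)}$ from Lemma~\ref{lemma:k_plus_1_optimal_dist_non_increase}, telescoped against the $R^2/4+3R^2/4$ budget---but you account for the randomness differently. The paper's proof multiplies per-step success probabilities to get $\left(1-\tfrac{1}{\theta^2}\right)^{\O(N-K)}\geq 1-\tau$, leaning on the finite-sum structure so that $|\B_k|=\O(t)$ saturates at $N$ after $\O(N-K)$ steps and the error vanishes thereafter, and then chooses $\theta$ large depending on both $\tau$ and $N-K$; this implicitly treats the per-step events as independent and leaves the induction that discharges the self-referential hypothesis of Lemma~\ref{lemma:k_plus_1_optimal_dist_non_increase} unstated. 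You instead condition once on the single trajectory-uniform event $E$ furnished by the $\max_{\{y_t\}\in\mathcal{X}^\infty}$ formulation of Lemma~\ref{lemma:convergence-series}, take $\theta=1/\sqrt{\tau}$, and run a deterministic induction on $t$ inside $E$; this avoids both the independence issue and the dependence of $\theta$ on $N-K$, and makes explicit the induction the paper elides. Two caveats, both inherited from the paper rather than introduced by you: the threshold in Lemma~\ref{lemma:convergence-series} is $\tfrac{3R^2}{8(4R+1)}$ while your event $E$ (and the paper's own invocation in Lemma~\ref{lemma:k_plus_1_optimal_dist_non_increase}) uses $\tfrac{3R^2}{4(4R+1)}$---harmless since the stated bound is the stronger one; and the verification that $x_\ell\in\mathcal{X}$ at every visited index is only partially settled by Lemma~\ref{lemma:support_cover}, which controls groups in $\I^{\neq 0}(x^*)$ but not groups in $\I^{0}(x^*)\bigcap\I^{\neq 0}(x_\ell)$---the paper defers that point to a separate lemma following Corollary~\ref{corollary:x_star_in_all_polyhedrons}, so your ``bookkeeping'' remark is flagging a real gap that the paper itself patches only after the fact.
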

\begin{proof}
	It follows Lemma~\ref{lemma:x_star_in_polyhedron} and the assumption of this lemma that $x^*\in\S_K$. Moreover, it follows the assumptions of this lemma, Lemma~\ref{lemma:convergence-series} and~\ref{lemma:k_plus_1_optimal_dist_non_increase}, the definition of finite-sum $f(x)$ in \eqref{prob.x}, and the bound of error as~\eqref{eq:bound_error} that
	\begin{equation}
	\mathbb{P}(\{x_k\}_{k=K}^{\infty}\in \{x: \norm{x-x^*}\leq R\}^{\infty})\geq \left(1-\frac{1}{\theta^2}\right)^{\O(N-K)}\geq 1-\tau,    
	\end{equation}
	where the last two inequalities comes from that the error vanishing to zero as $|\B_k|$ reaches the upper bound $N$, and $\theta$ is sufficiently large depending on $\tau$ and $\O(N-K)$. 
\end{proof}

\begin{corollary}\label{corollary:x_star_in_all_polyhedrons}
Lemma~\ref{lemma:x_k_in_neghibors} further implies $x^*$ inhabits all subsequent $\S_k$, i.e., $x^*\in \S_{k}$  for any $k\geq K$.
\end{corollary}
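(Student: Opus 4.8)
The plan is to read the corollary as the pointwise combination of the two preceding lemmas: Lemma~\ref{lemma:x_k_in_neghibors} delivers a distance bound for the entire tail of the iterate sequence, and Lemma~\ref{lemma:x_star_in_polyhedron} converts any such distance bound into the containment $x^*\in\S_k$. So essentially no new estimate is required; the work is simply to thread the probabilistic event correctly.

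First I would invoke Lemma~\ref{lemma:x_k_in_neghibors} under the standing hypotheses of Theorem~\ref{thm:convergence} ($\norm{x_K-x^*}\leq R/2$ with $K\geq N_\P$, the decay $\alpha_k=\O(1/(\sqrt{N}t))$ inside the prescribed bound, and the growth $|\B_k|=\O(t)\geq N-\frac{N}{2M}$). This secures, for any fixed $\tau\in(0,1)$, a single \emph{good event} $E$ of probability at least $1-\tau$ on which $\norm{x_k-x^*}\leq R$ holds \emph{simultaneously} for every $k\geq K$; this is precisely the trajectory-level statement $\mathbb{P}(\{x_k\}_{k=K}^{\infty}\in\{x:\norm{x-x^*}\leq R\}^{\infty})\geq 1-\tau$ established in the proof of that lemma.

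Next, on $E$ I would apply Lemma~\ref{lemma:x_star_in_polyhedron} to each index $k\geq K$ separately. For fixed $k$ its hypotheses are met: $\norm{x_k-x^*}\leq R$ holds on $E$, and the ambient choice $0\leq\epsilon<\delta_1^2/\delta_2$ is guaranteed by the admissible range $\epsilon\in[0,\min\{\delta_1^2/\delta_2,\,(2\delta_1-R)/(2\delta_2+R)\})$ imposed in Theorem~\ref{thm:convergence}. The lemma then yields directly $x^*\in\S_k=\S(x_k)$. Since this holds for every $k\geq K$ on the same event $E$, we conclude $x^*\in\S_k$ for all $k\geq K$ with probability at least $1-\tau$, as claimed.

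The only point requiring care, and the main (mild) obstacle, is \emph{uniformity in $k$}: the assertion is ``for all $k\geq K$ at once,'' so I must rely on the simultaneous trajectory bound of Lemma~\ref{lemma:x_k_in_neghibors} rather than a per-iterate bound, which would degrade under an infinite union over $k$. Because that lemma already controls the probability of the whole tail staying in the $R$-ball, no further union bound or probabilistic bookkeeping is needed, and the deterministic implication of Lemma~\ref{lemma:x_star_in_polyhedron} then applies verbatim on the good event. In short, the corollary is a direct repackaging of Lemmas~\ref{lemma:x_k_in_neghibors} and~\ref{lemma:x_star_in_polyhedron}, and I expect the proof to be only a few lines.
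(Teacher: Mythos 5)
Your proposal is correct and follows exactly the route the paper intends: the corollary is stated without a separate proof precisely because it is the pointwise combination of the trajectory-level bound $\norm{x_k-x^*}\leq R$ for all $k\geq K$ from Lemma~\ref{lemma:x_k_in_neghibors} with the deterministic implication of Lemma~\ref{lemma:x_star_in_polyhedron} under the admissible range of $\epsilon$. Your explicit attention to working on the single good event (rather than taking a union over per-iterate bounds) is the right reading of the probabilistic statement and matches the paper's treatment.
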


Next, we establish that after finitely number of iterations,~\algacro{} generates sequences that inhabits in the feasible domain $\mathcal{X}$ where Lipschitz continuity of $\Psi$ holds. 
\begin{lemma}
Suppose the assumptions of Lemma~\ref{lemma:x_k_in_neghibors} hold, then after finite number of iterations, all subsequent iterates $x_k\in\mathcal{X}$ with high probability.
\end{lemma}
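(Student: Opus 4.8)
The plan is to work throughout on the probability-$(1-\tau)$ event furnished by Lemma~\ref{lemma:x_k_in_neghibors}, on which $\norm{x_k-x^*}\leq R$ for every $k\geq K$, and to reduce membership in $\mathcal{X}$ to a statement about the active groups of $x_k$. Since every group is either zero or nonzero, the defining condition of $\mathcal{X}$ in~\eqref{def:mathcal_X} amounts to requiring $\norm{[x_k]_g}\geq\delta_1$ for each $g\in\I^{\neq 0}(x_k)$. I would split the active groups into the ``true'' support $\I^{\neq 0}(x^*)$ and the ``spurious'' set $\I^{\neq 0}(x_k)\cap\I^{0}(x^*)$. For $g\in\I^{\neq 0}(x^*)$ the triangle inequality with $R\leq\delta_1$ gives $\norm{[x_k]_g}\geq\norm{[x^*]_g}-R\geq 2\delta_1-R\geq\delta_1$, so these groups already satisfy the $\mathcal{X}$-inequality, and Lemma~\ref{lemma.project_as_zero_group} guarantees they are never projected to zero, hence they persist. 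Consequently $x_k\in\mathcal{X}$ can fail only because of spurious groups, and it suffices to show that after finitely many~\halfspacestep{} iterations none survives, i.e.\ $\I^{\neq 0}(x_k)=\I^{\neq 0}(x^*)$.

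The structural fact I would exploit next is that the~\halfspacestep{} freezes every zero group at zero (line~\ref{line:half_space_trial_iterate} of Algorithm~\ref{alg:main.x.halfspacestep}), so $\I^{\neq 0}(x_{k+1})\subseteq\I^{\neq 0}(x_k)$: the support is monotonically non-increasing, and since $|\mathcal{G}|<\infty$ it can strictly shrink only finitely often and therefore stabilizes. The remaining task is to rule out stabilization at a support strictly larger than $\I^{\neq 0}(x^*)$, i.e.\ to show each spurious group is eventually zeroed. For such a group $[x^*]_g=0$, so $\norm{[x_k]_g}=\norm{[x_k-x^*]_g}\leq R$, and I would invoke the strict-complementarity constant $\delta_3$ of Assumption~\ref{assumption:delta}: because the bound holds on any mini-batch, $\norm{[\Grad f_{\B_k}(x^*)]_g}\leq\lambda-2\delta_3$, and Lipschitz continuity transfers this to $\norm{[\Grad f_{\B_k}(x_k)]_g}\leq\lambda-2\delta_3+LR$ on the good event.

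With this estimate the combined gradient points outward along $[x_k]_g$: since $[\Grad\Psi_{\B_k}(x_k)]_g^\top[x_k]_g=[\Grad f_{\B_k}(x_k)]_g^\top[x_k]_g+\lambda\norm{[x_k]_g}\geq(2\delta_3-LR)\norm{[x_k]_g}$, substituting into the half-space test~\eqref{def:proj} shows that $g$ is projected to zero as soon as $(1-\epsilon)\norm{[x_k]_g}\leq\alpha_k(2\delta_3-LR)$, i.e.\ once $\norm{[x_k]_g}$ falls below a threshold $\rho_k=\Theta(\alpha_k\delta_3)$ of the same order as the radius in Theorem~\ref{thm:sparsity_recovery_rate_hbproxsg}. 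To see that this threshold is actually reached, I would expand $\norm{[x_{k+1}]_g}^2=\norm{[x_k-\alpha_k\Grad\Psi_{\B_k}(x_k)]_g}^2$ and use the outward estimate together with the uniform bound $\norm{[\Grad\Psi_{\B_k}(x_k)]_g}\leq 2\lambda-2\delta_3+LR=:\sqrt{C}$ to obtain $\norm{[x_{k+1}]_g}^2\leq\norm{[x_k]_g}^2-2\alpha_k(2\delta_3-LR)\norm{[x_k]_g}+\alpha_k^2 C$, where the sampling error is controlled by~\eqref{eq:bound_error} and vanishes as $|\B_k|\to N$.

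Finally I would argue finiteness from this recursion: while $\norm{[x_k]_g}$ stays bounded away from the $\Theta(\alpha_k)$ threshold the linear drift term dominates the quadratic one, so $\norm{[x_k]_g}$ decreases by $\Theta(\alpha_k)$ per step, and since $\alpha_k=\O(1/(\sqrt{N}t))$ gives $\sum_k\alpha_k=\infty$ the norm is driven below $\rho_k$ after finitely many iterations and the group is zeroed; monotonicity of the support then keeps it zero. Applying this to each of the finitely many spurious groups yields $\I^{\neq 0}(x_k)=\I^{\neq 0}(x^*)$ after finitely many steps, whence every active group has norm at least $\delta_1$ and $x_k\in\mathcal{X}$ for all subsequent $k$, with probability at least $1-\tau$. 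The main obstacle will be precisely this last step: reconciling the vanishing step size with the vanishing projection threshold, where the $\Theta(\alpha_k)$ drift and the $\Theta(\alpha_k^2)$ curvature and noise terms become comparable as $\norm{[x_k]_g}\downarrow\rho_k$. Handling it cleanly requires both $\sum_k\alpha_k=\infty$ and the decay of the sampling error~\eqref{eq:bound_error}, and also the mild smallness requirement $2\delta_3>LR$ (arrangeable by taking the neighborhood, hence $N_\P$, large enough) that keeps the drift coefficient strictly positive.
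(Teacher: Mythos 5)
Your overall skeleton coincides with the paper's: both arguments restrict to the probability-$(1-\tau)$ event of Lemma~\ref{lemma:x_k_in_neghibors}, invoke Lemma~\ref{lemma:support_cover} to split the active groups into $\I^{\neq 0}(x^*)$ and the spurious set $\I^{0}(x^*)\cap\I^{\neq 0}(x_k)$, and dispose of the former by the triangle inequality $\norm{[x_k]_g}\geq 2\delta_1-R\geq\delta_1$. Where you diverge is in eliminating the spurious groups: you build a self-contained one-step drift recursion from strict complementarity, whereas the paper imports an $\O(1/t)$ bound on $\mathbb{E}[\norm{[x_k]_g}^2]$ from external error-bound/PL-type convergence-rate results, combines it with the observation that a group surviving the half-space test satisfies $\norm{[x_{k+1}]_g}\geq\epsilon\norm{[x_k]_g}$, and treats $\epsilon>0$ and $\epsilon=0$ separately (the latter requiring additional strong convexity). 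Your outward-gradient estimate is essentially the computation in the proof of Theorem~\ref{thm:sparsity_recovery_rate_hbproxsg}; note that if you bound the Lipschitz term by $L\norm{[x_k]_g}$ rather than $LR$ (legitimate, since $\norm{[x_k-x^*]_g}=\norm{[x_k]_g}$ for spurious $g$), you recover exactly the threshold $\frac{2\alpha_k\delta_3}{1-\epsilon+\alpha_kL}$ and your extra hypothesis $2\delta_3>LR$ disappears --- this matters because $R$ is fixed by \eqref{def:R} in terms of $\delta_1,\delta_2,\epsilon$, so it cannot be shrunk by enlarging $N_\P$ as you suggest.

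The genuine gap is the one you flag yourself and do not close. Writing $u_k=\norm{[x_k]_g}^2$ and letting $a$ denote your drift coefficient, the recursion $u_{k+1}\leq u_k-2\alpha_k a\sqrt{u_k}+\alpha_k^2C$ together with $\sum_k\alpha_k=\infty$ and $\sum_k\alpha_k^2<\infty$ yields $\liminf_k\sqrt{u_k}=0$, but the projection threshold $\rho_k=\Theta(\alpha_k\delta_3)=\Theta(1/t)$ is itself vanishing, and the conclusion requires $\sqrt{u_k}\leq\rho_k$ to hold at a single finite index. A vanishing $\liminf$, or even $u_k\to0$, does not deliver this without a rate comparison; and precisely in the regime $\sqrt{u_k}\approx\rho_k$ the favorable drift $2\alpha_k a\sqrt{u_k}=\Theta(\alpha_k^2)$ is of the same order as the adverse term $\alpha_k^2C$ (with $C\approx(2\lambda)^2\gg a^2$), so the recursion no longer forces a decrease. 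This is exactly the point at which the paper abandons the one-step analysis and leans on cited rate results for $\mathbb{E}[\norm{[x_k]_g}^2]$, and it is the step your proposal would have to supply to be complete.
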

\begin{proof}
It follows Lemma~\ref{lemma:x_k_in_neghibors} that all subsequent $x_k$ satisfying $\norm{x_k-x^*}\leq R$ with high probability. Combining with Lemma~\ref{lemma:support_cover}, we have that $\I^{\neq 0}(x^*)\subseteq\I^{\neq 0}(x_k)$ for all $k\geq K$ with high probability. Then for any $g\in\I^{\neq 0}(x_k)$, there are two possbilities, either $g\in\I^{\neq 0}(x^*)$ or $g\in \I^{0}(x^*)$.  For the first case $g\in\I^{\neq 0}(x^*)\bigcap\I^{\neq 0}(x_k)$, it follows the definitions of $R$ as~\eqref{def:R} and $\delta_1$ that
\begin{equation}
\begin{split}
\norm{[x_k-x^*]_g}&\leq \norm{x_k-x^*}\leq R\leq \delta_1\\
\norm{[x^*]_g}-\norm{[x_k]_g}&\leq \delta_1\\
\norm{[x_k]_g}&\geq \norm{[x^*]_g}-\delta_1\geq2\delta_1-\delta_1=\delta_1
\end{split}
\end{equation}
For any $g\in \I^{0}(x^*)\bigcap\I^{\neq 0}(x_k)$, by~Algorithm~\ref{alg:main.x.halfspacestep}, its norm is bounded below by
\begin{equation}
\delta_1\geq \norm{[x_k-x^*]_g}=\norm{[x_k]_g}\geq \epsilon^t\norm{[x_K]_g},    
\end{equation}
where by the Theorem~\ref{thm:sparsity_recovery_rate_hbproxsg} will shown in  Appendix~\ref{appendix:sparsity_recovery_thm}, if $\norm{[x_k]_g}\leq \frac{2\alpha_k\delta_3}{1-\epsilon+\alpha_kL}$, then $[x_{k+1}]_g$ equals to zero and will be fixed as zero since Algorithm~\ref{alg:main.x.halfspacestep} operates on $\S_k$ as~\eqref{def:polytope}. Note $\alpha_k=\O(1/t)$, following~\cite[Theorem 4]{karimi2016linear} and~\cite[Theorem 3.2]{drusvyatskiy2018error}, $\mathbb{E}[\norm{[x_k]_g}^2]=\O(1/t)$. If $\epsilon>0$, then after finite number of iterations $\O(1/\epsilon^2)$, $g\in \I^{0}(x^*)\bigcap\I^{\neq 0}(x_k)$ becomes zero. If $\epsilon=0$, note $\B_k=\O(t)$ and $f$ is finite-sum, then similar result holds by~\cite[Theorem 2.3, Theorem 3.2]{robert2018convergegd} ($f$ needs further strongly convexity on $\tilde{\mathcal{X}}$). Hence with high probability, after finite number of iterations, denoted by $T$, all subsequent $x_k$, $k\geq K+T$ inhabits $\mathcal{X}$. Regarding $[x_k]_{g\in \I^{0}(x^*)\bigcap\I^{\neq 0}(x_k)}$ for $K\leq k\leq K+T$, note $\epsilon^{t}\norm{[x_K]_g}$ is also bounded below by constant $\epsilon^{T}\norm{[x_K]_g}>0$ given $x_K$, for similicity, denote the Lipschitz constant of $[\Grad \Psi(x_k)]_g$ as $L$ as well. 
\end{proof}
\newpage
We now prove the first main theorem of~\algacro{}.\\
\textbf{Proof of Theorem~\ref{thm:convergence}}
We know that Algorithm~\ref{alg:main.x.outline} performs an infinite sequence of iterations. It follows Corollary~\ref{corollary:Psi_epoch_decrease} that for any $\ell\in\mathbb{Z}^+$, 
\begin{equation}
\begin{split}
&\mathbb{E}[\Psi(x_K)]-\mathbb{E}[\Psi(x_{\ell+1})]=\sum_{k=K}^{\ell}\left\{	\mathbb{E}[\Psi(x_{k})]-\mathbb{E}[\Psi(x_{k+1})]\right\}\\
\geq&\sum_{K\leq k\leq \ell}\left(\alpha_k-\frac{\alpha_k^2L}{2}\right)\sum_{g\in\tilde{\G}_k}\mathbb{E}\left[\norm{[\Grad \Psi(x_{k})]_g}^2\right]+\sum_{K\leq k\leq \ell}\left(\frac{1-\epsilon}{\alpha_k}-\frac{L}{2}\right)\sum_{g\in\hat{\G}_k}\norm{[x_{k}]_g}^2.
\end{split}
\end{equation}
Combining the assumption that $\Psi$ is bounded below and letting $\ell\rightarrow \infty$, we obtain 
\begin{equation}\label{series:sum_convergent_half_space}
\begin{split}
\sum_{k\geq K}\left(\alpha_k-\frac{\alpha_k^2L}{2}\right)\sum_{g\in\tilde{\G}_k}\mathbb{E}\left[\norm{[\Grad \Psi(x_{k})]_g}^2\right]+\sum_{k\geq K}\left(\frac{1-\epsilon}{\alpha_k}-\frac{L}{2}\right)\sum_{g\in\hat{\G}_k}\norm{[x_{k}]_g}^2<\infty
\end{split}
\end{equation}
By~Algorithm~\ref{alg:main.x.halfspacestep}, variables on $\I^{0}(x_k)$ are fixed during~$k$th~\halfspacestep{} and $n$ is finite, then the group projection appears finitely many times, consequently,
\begin{equation}
\sum_{k\geq K}\left(\frac{1-\epsilon}{\alpha_k}-\frac{L}{2}\right)\sum_{g\in\hat{\G}_k}\norm{[x_{k}]_g}^2<\infty.
\end{equation}

Thus~\eqref{series:sum_convergent_half_space} implies that 
\begin{align}\label{series:sum_convergent_half_space_grad_Psi}
&\sum_{k\geq K}\left(\alpha_k-\frac{\alpha_k^2L}{2}\right)\sum_{g\in\tilde{\G}_k}\mathbb{E}\left[\norm{[\Grad \Psi(x_{k})]_g}^2\right]\\
=&\sum_{k\geq K}\alpha_k\sum_{g\in\tilde{\G}_k}\mathbb{E}\left[\norm{[\Grad \Psi(x_{k})]_g}^2\right]- \sum_{k\geq K}\frac{\alpha_k^2}{L}\sum_{g\in\tilde{\G}_k}\mathbb{E}\left[\norm{[\Grad \Psi(x_{k})]_g}^2\right]<\infty
\end{align}
Since $\alpha_k=\O(1/(\sqrt{N}t))$, then $\sum_{k\geq K}\alpha_k=\infty$ and $\sum_{k\geq K}\alpha_k^2\leq\infty$. Combining with~\eqref{series:sum_convergent_half_space_grad_Psi} and the boundness of $\partial \Psi$, it implies  
\begin{equation}\label{series:sum_convergent_half_space_alpha_grad_Psi}
\sum_{k\geq K}\alpha_k\sum_{g\in\tilde{\G}_k}\mathbb{E}\left[\norm{[\Grad \Psi(x_{k})]_g}^2\right]<\infty.    
\end{equation}
By $\sum_{k\geq K}\alpha_k=\infty$ and~\eqref{series:sum_convergent_half_space_alpha_grad_Psi}, we have that 
\begin{equation}
\liminf_{k\geq K} \sum_{g\in\tilde{\G}_k}\mathbb{E}\left[\norm{[\Grad \Psi(x_{k})]_g}^2\right]=0
\end{equation}
then there exists a subsequence $\mathcal{K}$ such that 
\begin{equation}\label{series:sum_convergent_half_space_grad_Psi_subsequence}
\lim_{k\in\mathcal{K}}\sum_{g\in\tilde{\G}_k}\mathbb{E}\left[\norm{[\Grad \Psi(x_{k})]_g}^2\right]=0
\end{equation}
It follows from the assumptions of this theorem and Lemma~\ref{lemma:support_cover} to~\ref{lemma:x_k_in_neghibors} and Corollay~\ref{corollary:x_star_in_all_polyhedrons} that with high probability at least $1-\tau$, for each $k\geq K$, $x^*$ inhabits $\S_k$. Note as $|\B_k|=\O(t)$ linearly increases, the error of gradient estimate vanishes. Hence,~\eqref{series:sum_convergent_half_space_grad_Psi_subsequence} naturally implies that the sequence $\{x_k\}_{k\in\mathcal{K}}$ converges to some stationary point with high probability. And we can extend $\mathcal{K}$ to $\{k: k\geq K\}$ due to the non-decreasing distance to optimal solution as shown in the Lemma~\ref{lemma:x_k_in_neghibors}. By the above, we conclude that 
\begin{equation}
 \mathbb{P}(\lim_{k\rightarrow \infty} \mathbb{E}\left[ \norm{\xi_{\alpha_k,\mathcal{B}_k}(x_k)}\right]=0)\geq 1-\tau.   
\end{equation}

\subsection{Proof of Theorem~\ref{thm:sparsity_recovery_rate_hbproxsg}}\label{appendix:sparsity_recovery_thm}

In this Appendix, we compare the group sparsity identification property of~\algacro{} and~\proxsg{}. We first show the generic sparsity identification property of~\proxsg{} for any mixed $\ell_1/\ell_p$ regularization for $p\geq 1$. 
\begin{lemma}\label{lemma:next_iterate_as_zero1_prox_sg}
	If $\norm{x_{k}-x^*}_{p'}\leq \min\{\delta_3/L, \alpha_k\delta_3\}$, where $1/p+1/p'=1$ $(p'=\infty\ \text{if}\ p = 1)$, then the~\proxsg{} yields that for each $g\in \I^0(x^*)$, $[x_{k+1}]_g = 0$ holds, \ie, $\I^0(x^*)\subseteq \I^0(x_{k+1})$.
\end{lemma}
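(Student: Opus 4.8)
The plan is to reduce the claim to the closed-form zero-test of the $\ell_p$ proximal operator, and then verify that test for each $g\in\I^0(\bm{x}^*)$ using the strict complementarity condition and the two-part bound on $\norm{\bm{x}_k-\bm{x}^*}_{p'}$. Since $\Omega$ is group-separable, the Prox-SG update decouples across groups, and for the $\ell_p$-regularizer the proximal map sends a group to zero precisely when the trial point lies in a dual-norm ball. Concretely, writing $\hat{\bm{x}}_{k+1}:=\bm{x}_k-\alpha_k\Grad f_{\mathcal{B}_k}(\bm{x}_k)$, the optimality condition $0\in\partial\big(\tfrac12\norm{\cdot-[\hat{\bm{x}}_{k+1}]_g}^2+\alpha_k\lambda\norm{\cdot}_p\big)(0)$ shows that $[\bm{x}_{k+1}]_g=0$ holds if and only if $\norm{[\hat{\bm{x}}_{k+1}]_g}_{p'}\leq\alpha_k\lambda$. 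So it suffices to establish this inequality for every $g\in\I^0(\bm{x}^*)$.

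To do so I would fix $g\in\I^0(\bm{x}^*)$, so that $[\bm{x}^*]_g=\bm{0}$ and hence $[\bm{x}_k]_g=[\bm{x}_k-\bm{x}^*]_g$. Inserting $\pm\,\alpha_k[\Grad f_{\mathcal{B}_k}(\bm{x}^*)]_g$ and applying the triangle inequality splits the trial point into three controllable pieces:
\begin{equation*}
\norm{[\hat{\bm{x}}_{k+1}]_g}_{p'}\leq \norm{[\bm{x}_k-\bm{x}^*]_g}_{p'}+\alpha_k\norm{[\Grad f_{\mathcal{B}_k}(\bm{x}_k)-\Grad f_{\mathcal{B}_k}(\bm{x}^*)]_g}_{p'}+\alpha_k\norm{[\Grad f_{\mathcal{B}_k}(\bm{x}^*)]_g}_{p'}.
\end{equation*}
The first term is at most $\norm{\bm{x}_k-\bm{x}^*}_{p'}$ by monotonicity of the $\ell_{p'}$-norm under restriction to a sub-block of coordinates, hence at most $\alpha_k\delta_3$ by the second half of the hypothesis. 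The second term is controlled by Lipschitz continuity of $\Grad f_{\mathcal{B}_k}$, giving $\alpha_k L\norm{\bm{x}_k-\bm{x}^*}_{p'}\leq\alpha_k\delta_3$ via $\norm{\bm{x}_k-\bm{x}^*}_{p'}\leq\delta_3/L$. The third term is bounded by the strict complementarity margin in the dual norm $\norm{\cdot}_{p'}$ (which reduces to the stated $\ell_2$ version when $p=2$): for $g\in\I^0(\bm{x}^*)$ one has $\norm{[\Grad f_{\mathcal{B}_k}(\bm{x}^*)]_g}_{p'}\leq\lambda-2\delta_3$. Summing the three bounds yields $\norm{[\hat{\bm{x}}_{k+1}]_g}_{p'}\leq\alpha_k\delta_3+\alpha_k\delta_3+\alpha_k(\lambda-2\delta_3)=\alpha_k\lambda$, which is exactly the zero-test, so $[\bm{x}_{k+1}]_g=\bm{0}$ and therefore $\I^0(\bm{x}^*)\subseteq\I^0(\bm{x}_{k+1})$.

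The main obstacle I anticipate is the norm bookkeeping for general $p$: establishing the dual-norm ball zero-test for $\prox_{\alpha_k\lambda\norm{\cdot}_p}$, and ensuring both the Lipschitz bound and the complementarity margin are phrased in the correct dual norm $\norm{\cdot}_{p'}$ rather than the $\ell_2$ norm in which Assumption~\ref{assumption} is stated. For $p=2$ everything collapses to the $\ell_2$ norm and the argument is completely clean; the two-part hypothesis $\min\{\delta_3/L,\alpha_k\delta_3\}$ is evidently engineered precisely so that the three contributions telescope to exactly $\alpha_k\lambda$, leaving no slack.
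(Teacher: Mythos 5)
Your proposal is correct and follows essentially the same route as the paper's proof: both reduce the claim to the dual-norm ball zero-test $\norm{[\hat{\bm{x}}_{k+1}]_g}_{p'}\leq\alpha_k\lambda$ for the group-separable $\ell_p$ prox (you via the subdifferential condition at zero, the paper via the Moreau identity--minus--projection decomposition), and both then combine the two halves of the $\min\{\delta_3/L,\alpha_k\delta_3\}$ hypothesis with Lipschitz continuity and the strict complementarity margin $\lambda-2\delta_3$ to make the three contributions sum to exactly $\alpha_k\lambda$. Your single three-term triangle inequality is just a repackaging of the paper's two-stage bound (first $\norm{[\Grad f_{\mathcal{B}_k}(\bm{x}_k)]_g}_{p'}\leq\lambda-\delta_3$, then the trial-point bound), so there is nothing substantive to distinguish the two arguments.
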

\begin{proof}
	It follows from the reverse triangle inequality, basic norm inequalities, Lipschitz continuity of $\Grad f(x)$ and the assumption of this lemma that for any $g\in \mathcal{G}$, 
	\begin{equation}\label{eq:grad_f_star_pprime}
	\begin{split}
	\norm{[\Grad f_{\mathcal{B}_k} (x_{k})]_g}_{p'}-\norm{[\Grad f_{\mathcal{B}_k}(x^*)]_g}_{p'}&\leq\norm{[\Grad f_{\mathcal{B}_k}(x_{k})-\Grad f_{\mathcal{B}_k}(x^*)]_g}_{p'}\\
	&\leq \norm{\Grad f_{\mathcal{B}_k}(x_{k})-\Grad f_{\mathcal{B}_k}(x^*)}_{p'}\\
	&\leq L\norm{x_{k}-x^*}_{p'}\leq L\cdot\frac{\delta_3}{L}=\delta_3.
	\end{split}
	\end{equation}
	By~\eqref{eq:grad_f_star_pprime}, we have that for any $g\in \I^{0}(x^*)$, 
	\begin{equation}\label{eq:grad_f_x_k_delta3}
	\begin{split}
	\norm{[\Grad f_{\mathcal{B}_k}(x_{k})]_g}_{p'}&\leq \norm{[\Grad f_{\mathcal{B}_k}(x^*)]_g}_{p'} + \delta_3\\
	&\leq \lambda - 2\delta_3 + \delta_3 = \lambda - \delta_3
	\end{split}
	\end{equation}
	Combining~\eqref{eq:grad_f_x_k_delta3} and the assumption of this lemma, the following holds for any $\alpha_k>0$ that 
	\begin{equation}
	\begin{split}
	\norm{[x_{k}-\alpha_k\Grad f_{\mathcal{B}_k}(x_{k})]_g}_{p'}&\leq \norm{[x_{k}]_g}_{p'} + \norm{[\alpha_k\Grad f_{\mathcal{B}_k}(x_{k})]_g}_{p'}\\
	&\leq \alpha_k\delta_3+\alpha_k(\lambda - \delta_3)=\alpha_k\lambda
	\end{split}
	\end{equation} 
	which further implies that the Ecludiean projection yields that 
	\begin{equation}\label{eq:proj_x_k_grad_f}
	\proj^E_{\mathcal{B}(\norm{\cdot}_{p'},\alpha_k\lambda)} ([x_{k}-\alpha_k\Grad f_{\mathcal{B}_k}(x_k)]_g) = [x_{k}-\alpha_k\Grad f_{\mathcal{B}_k} (x_{k})]_g.
	\end{equation}
	Combining with~\eqref{eq:proj_x_k_grad_f}, the fact that proximal operator is the residual of identity operator subtracted by Euclidean project operator onto the dual norm ball and $[x_{k}]_g=0$ for any $g\in\mathcal{I}^0(x^*)$~\citep{chen2018fast},  we have that 
	\begin{equation}
	\begin{split}
	[x_{k+1}]_g&=\prox_{\alpha_k\lambda \norm{\cdot}_p}([x_{k}-\alpha_k \Grad f_{\mathcal{B}_k}(x_{k})]_g)\\
	&=\left[I-\proj^E_{\mathcal{B}(\norm{\cdot}_{p'},\alpha_k\lambda)}\right]\left[x_{k}-\alpha_k \Grad f_{\mathcal{B}_k}(x_k)\right]_g\\
	&=\left[x_{k}-\alpha_k \Grad f_{\mathcal{B}_k}(x_{k})\right]_g-\left[x_{k}-\alpha_k \Grad f_{\mathcal{B}_k}(x_{k})\right]_g=0,
	\end{split}
	\end{equation}
	consequently $\I^0(x^*)\subseteq \I^0(x_{k+1})$, which completes the proof.
\end{proof}

Now we establish the group-sparsity identification of~\algacro{}. \\\\
\textbf{Proof of Theorem~\ref{thm:sparsity_recovery_rate_hbproxsg}:}

	Suppose $\norm{x_k-x^*}\leq \frac{2\alpha_k\delta_3}{1-\epsilon+\alpha_kL}$. There is nothing to prove if $g\in \I^{0}(x^*)\bigcap \I^{0}(x_k)$. For $g\in \I^0(x^*)\bigcap \I^{\neq 0}(x_k)$,  we compute that
	\begin{equation}\label{eq:tmp_1}
	\begin{split}
	&[x_k-\alpha_k\Grad \Psi_{\B_k}(x_k)]_g^\top[x_k]_g- \epsilon\norm{[x_k]_g}^2\\
	=&\norm{[x_k]_g}^2-\alpha_k[\Grad \Psi_{\B_k}(x_k)]_g^\top[x_k]_g-\epsilon\norm{[x_k]_g}^2\\
	=&(1-\epsilon)\norm{[x_k]_g}^2-\alpha_k\left([\Grad f_{\B_k}(x_k)]_g+\lambda \frac{[x_k]_g}{\norm{[x_k]_g}}\right)^\top[x_k]_g\\
	=&(1-\epsilon)\norm{[x_k]_g}^2-\alpha_k[\Grad f_{\B_k}(x_k)]_g^\top[x_k]_g-\alpha_k\lambda \norm{[x_k]_g}\\
	\leq & (1-\epsilon)\norm{[x_k]_g}^2+\alpha_k\norm{[\Grad f_{\B_k}(x_k)]_g}\norm{[x_k]_g}-\alpha_k\lambda \norm{[x_k]_g}\\
	=& \norm{[x_k]_g}\left\{(1-\epsilon)\norm{[x_k]_g}+\alpha_k\norm{[\Grad f_{\B_k}(x_k)]_g}-\alpha_k\lambda\right\}
	\end{split}
	\end{equation}
	
	By the Lipschitz continuity of $\Grad f$, we have that for each $g\in \I^0(x^*)\bigcap \I^{\neq 0}(x_k)$, 
	\begin{equation}
	\begin{split}
	\norm{[\Grad f_{\B_k}(x_k)-\Grad f_{\B_k}(x^*)]_g}&\leq L\norm{[x_k-x^*]_g}=L\norm{[x_k]_g}\\
	\norm{[\Grad f_{\B_k}(x_k)]_g}&\leq L\norm{[x_k]_g}+\norm{[\Grad f_{\B_k}(x^*)]_g}
	\end{split}
	\end{equation}
	Combining with the definition of $\delta_3$, which implies that $\norm{[\Grad f_{\B_k}(x^*)]_g}\leq \lambda-2\delta_3$ that 
	\begin{equation}
	\norm{[\Grad f_{\B_k}(x_k)]_g}\leq L\norm{[x_k]_g}+\lambda -2\delta_3
	\end{equation}
	Hence combining with $\norm{[x_k]_g}\leq \frac{2\alpha_k\delta_3}{1-\epsilon+\alpha_kL}$,~\eqref{eq:tmp_1} can be further written as 
	\begin{equation}
	\begin{split}
	&[x_k-\alpha_k\Grad \Psi_{\B_k}(x_k)]_g^\top[x_k]_g- \epsilon\norm{[x_k]_g}^2\\
	\leq & \norm{[x_k]_g}\left\{(1-\epsilon)\norm{[x_k]_g}+\alpha_k\norm{[\Grad f_{\B_k}(x_k)]_g}-\alpha_k\lambda\right\}\\
	\leq &  \norm{[x_k]_g}\left\{(1-\epsilon)\norm{[x_k]_g}+\alpha_kL \norm{[x_k]_g} +\alpha_k\lambda-2\alpha_k\delta_3-\alpha_k\lambda\right\}\\
	= &  \norm{[x_k]_g}\left\{(1-\epsilon+\alpha_kL)\norm{[x_k]_g}-2\alpha_k\delta_3\right\}\\
	\leq & \norm{[x_k]_g}\left\{(1-\epsilon+\alpha_kL)\frac{2\alpha_k\delta_3}{1-\epsilon+\alpha_kL}-2\alpha_k\delta_3\right\}\\
	= & \norm{[x_k]_g}\left(2\alpha_k\delta_3-2\alpha_k\delta_3\right)=0.
	\end{split}
	\end{equation}
	which shows that $[x_k-\alpha_k\Grad \Psi_{\B_k}(x_k)]_g^\top[x_k]_g\leq \epsilon\norm{[x_k]_g}^2$. Hence the group projection operator is trigerred on $g$ to map the variables to zero, then $g\in \I^{0}(x_{k+1})$,~\ie, $[x_{k+1}]_g=0$. Therefore, the group sparsity of $x^*$ can be successfully identified by~\halfspacestep{}, \ie, $\I^0(x^*)\subseteq \I^0(x_{k+1})$.\\\\

In the end, if further assumptions hold, we can further show its group-support recovery. 
\begin{corollary}
Under the assumption of Theorem~\ref{thm:sparsity_recovery_rate_hbproxsg}, moreover, if $\norm{x_k-x^*}\leq R$, $x^*\in \S_k$, 
$0\leq \epsilon<\min\left\{\frac{\delta_1^2}{\delta_2}, \frac{2\delta_1-R}{2\delta_2+R}\right\}$ and $\alpha_k \leq\frac{2\delta_1-R-\epsilon(2\delta_2+R)}{M}$, then $\I^0(x^*)= \I^0(x_{k+1})$ and $\I^{\neq 0}(x_{k+1})=\I^{\neq 0}(x^*)$.
\end{corollary}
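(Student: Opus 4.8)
The plan is to prove the two claimed equalities by establishing the single identity $\I^0(x^*)=\I^0(x_{k+1})$; the companion statement $\I^{\neq 0}(x_{k+1})=\I^{\neq 0}(x^*)$ then follows immediately by taking complements within $\mathcal{G}$, since by~\eqref{def:I_set} the sets $\I^0(\cdot)$ and $\I^{\neq 0}(\cdot)$ partition the group index set $\mathcal{G}$. So it suffices to prove the two inclusions $\I^0(x^*)\subseteq\I^0(x_{k+1})$ and $\I^0(x_{k+1})\subseteq\I^0(x^*)$.

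The forward inclusion $\I^0(x^*)\subseteq\I^0(x_{k+1})$ is already in hand. The corollary is stated under the assumptions of Theorem~\ref{thm:sparsity_recovery_rate_hbproxsg}, in particular $k\geq N_\P$ and $\norm{x_k-x^*}\leq\frac{2\alpha_k\delta_3}{1-\epsilon+\alpha_kL}$, so that theorem applies verbatim and yields $\I^0(x^*)\subseteq\I^0(x_{k+1})$.

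For the reverse inclusion I would fix an arbitrary $g\in\I^0(x_{k+1})$ and split on the status of $g$ at $x_k$, using $\mathcal{G}=\I^0(x_k)\cup\I^{\neq 0}(x_k)$. If $g\in\I^{\neq 0}(x_k)$, then $g\in\I^{\neq 0}(x_k)\cap\I^0(x_{k+1})=\hat{\G}_k$, and Lemma~\ref{lemma.project_as_zero_group}---whose hypotheses $\norm{x_k-x^*}\leq R$, $\epsilon<\frac{2\delta_1-R}{2\delta_2+R}$, and $\alpha_k\leq\frac{2\delta_1-R-\epsilon(2\delta_2+R)}{M}$ are exactly the extra assumptions imposed in the corollary---gives $g\in\I^0(x^*)$. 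If instead $g\in\I^0(x_k)$, I invoke the hypothesis $x^*\in\S_k$: the lower bound $0<2\delta_1=\min_{g\in\I^{\neq 0}(x^*)}\norm{[x^*]_g}$ forces $\I^{\neq 0}(x^*)\neq\emptyset$ and hence $x^*\neq 0$, so membership $x^*\in\S_k$ must hold through the non-origin branch of the definition~\eqref{def:polytope}, which requires $[x^*]_g=0$ for every $g\in\I^0(x_k)$; thus $g\in\I^0(x^*)$. In either case $g\in\I^0(x^*)$, which establishes $\I^0(x_{k+1})\subseteq\I^0(x^*)$ and, combined with the forward inclusion, the desired equality.

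There is no deep obstacle here: the corollary is essentially an assembly of Theorem~\ref{thm:sparsity_recovery_rate_hbproxsg} and Lemma~\ref{lemma.project_as_zero_group}. The only step not covered verbatim by those results is the case $g\in\I^0(x_k)\cap\I^0(x_{k+1})$, where a group already zero before the step and still zero afterward must be matched against $x^*$; this is precisely where the added hypothesis $x^*\in\S_k$ is needed, together with the observation that $x^*\neq 0$ so that the coordinatewise constraints of $\S_k$ (rather than the lone origin) apply. The only care required is to check that the hypotheses of the cited theorem and lemma hold simultaneously, which they do by the stated assumptions.
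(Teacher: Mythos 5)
Your proof is correct and follows essentially the same route as the paper's: the paper chains $x^*\in\S_k \Rightarrow \I^{\neq 0}(x^*)\subseteq\I^{\neq 0}(x_k)$ with the contrapositive of Lemma~\ref{lemma.project_as_zero_group} to conclude $\I^{\neq 0}(x^*)\subseteq\I^{\neq 0}(x_{k+1})$, which is exactly your reverse inclusion $\I^0(x_{k+1})\subseteq\I^0(x^*)$ read through complements, and then combines it with Theorem~\ref{thm:sparsity_recovery_rate_hbproxsg} just as you do. Your explicit case split on $g\in\I^0(x_k)$ versus $g\in\I^{\neq 0}(x_k)$, and the remark that $x^*\neq 0$ forces membership in $\S_k$ through the half-space branch of~\eqref{def:polytope}, merely spell out the paper's one-line appeal to the definition of $\S_k$ more carefully.
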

\begin{proof}
 Moreover, besides $\norm{x_k-x^*}\leq\frac{2\alpha_k\delta_3}{1-\epsilon+\alpha_kL}$, suppose  $\norm{x_k-x^*}\leq R$, $x^*\in \S_k$, $0\leq \epsilon<\min\left\{\frac{\delta_1^2}{\delta_2}, \frac{2\delta_1-R}{2\delta_2+R}\right\}$ and $\alpha_k \leq\frac{2\delta_1-R-\epsilon(2\delta_2+R)}{M}$. Then $x^*\in \S_k$ indicates that $\I^{\neq 0}(x^*)\subseteq\I^{\neq 0}(x_k)$ by the definition of~$\S_k$. It still holds for $x_{k+1}$ by Lemma~\ref{lemma.project_as_zero_group}, \ie, $\I^{\neq 0}(x^*)\subseteq\I^{\neq 0}(x_{k+1})$. Combining with $\I^{0}(x^*)\subseteq \I^{0}(x_k)$, we have that both group-supports and group sparsity of $x^*$ are identified by~\algacro{}, \ie, $\I^{\neq 0}(x^*)= \I^{\neq 0}(x_{k+1})$ and $\I^{0}(x^*)= \I^{0}(x_{k+1})$.
\end{proof}

	\subsection{Upper bound of $N_\P$ under strongly convexity}\label{appendx:upper_bound_n_p}
	
	\begin{proposition} Suppose the following conditions hold:
		\begin{itemize}
			\item (A1) $\mathbb{E}[\nabla f_{\mathcal{B}_k}(\bm{x})] = \nabla f(\bm{x})$.
			\item (A2) there exists a $\sigma > 0$ such that $\mathbb{E}_{\mathcal{B}} [\| \nabla f_{\mathcal{B}}(\bm{x}) - \nabla f(\bm{x})\|^2] \leq \sigma^2$ for any mini-batch $\mathcal{B}$.
			\item (A3) there exists a $\beta \in (0,1)$ such that $0 < \alpha_k < \frac{1 -\beta}{L}$.
			\item (A4) $f$ is $\mu$-strongly convex.
		\end{itemize}
		Set the step-size $\alpha_k = \frac{1}{2\mu\beta k}$, $k_0 = \lceil\max\{1, \frac{1}{2\mu\beta}\} \rceil$. For any $\tau\in (0,1)$, there exists a $N_\P\in\mathbb{Z}^+$ such that  $N_\P\geq\left\lceil\max\left\{ \frac{8 k_0 \mathbb{E}[\|\bm{x}_{k_0} - \bm{x}^{\ast}\|^2]  }{R^2 \tau}, ~~ \frac{8 \sigma^2 \log (N_\P-1)}{\mu^2 \beta^2 R^2 \tau} \right\} \right\rceil$, such that performing Prox-SG $N_\P$ times yields 
		\begin{equation}
		\|\bm{x}_{N_\P} - \bm{x}^{\ast}\|\leq R/2
		\end{equation}
		with probability at least $1-\tau$.
	\end{proposition}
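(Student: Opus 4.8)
The plan is to reduce the high-probability claim to a bound on $\mathbb{E}[\|\bm{x}_{N_\P}-\bm{x}^{\ast}\|^2]$ via Markov's inequality, and to control that expectation through a standard contraction recursion for Prox-SG under strong convexity. First I would establish a one-step recursion. Writing the update as $\bm{x}_{k+1}=\prox_{\alpha_k\lambda\Omega}(\bm{x}_k-\alpha_k\nabla f_{\mathcal{B}_k}(\bm{x}_k))$ and using that $\bm{x}^{\ast}$ is the fixed point $\bm{x}^{\ast}=\prox_{\alpha_k\lambda\Omega}(\bm{x}^{\ast}-\alpha_k\nabla f(\bm{x}^{\ast}))$, the nonexpansiveness of the proximal operator gives
\[
\|\bm{x}_{k+1}-\bm{x}^{\ast}\|^2\leq \|\bm{x}_k-\bm{x}^{\ast}-\alpha_k(\nabla f(\bm{x}_k)-\nabla f(\bm{x}^{\ast}))-\alpha_k e_k\|^2,
\]
where $e_k:=\nabla f_{\mathcal{B}_k}(\bm{x}_k)-\nabla f(\bm{x}_k)$. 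Taking the conditional expectation, using (A1) to annihilate the cross term and (A2) to bound $\mathbb{E}\|e_k\|^2\leq\sigma^2$, then expanding the remaining square and invoking co-coercivity of $\nabla f$ together with $\mu$-strong convexity (A4) and the step-size bound (A3) $\alpha_k L<1-\beta$, I would arrive at $\mathbb{E}\|\bm{x}_{k+1}-\bm{x}^{\ast}\|^2\leq (1-2\mu\beta\alpha_k)\,\mathbb{E}\|\bm{x}_k-\bm{x}^{\ast}\|^2+\alpha_k^2\sigma^2$.

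Substituting $\alpha_k=\frac{1}{2\mu\beta k}$ converts this into $a_{k+1}\leq(1-\tfrac1k)a_k+\tfrac{C}{k^2}$ with $a_k:=\mathbb{E}\|\bm{x}_k-\bm{x}^{\ast}\|^2$ and $C:=\frac{\sigma^2}{4\mu^2\beta^2}$. The key algebraic step is the substitution $b_k:=(k-1)a_k$, under which the recursion collapses to $b_{k+1}\leq b_k+\frac{C}{k}$; telescoping from $k_0$ and bounding the harmonic sum $\sum_{k=k_0}^{N_\P-1}\tfrac1k$ by $\log(N_\P-1)$ yields
\[
a_{N_\P}\leq\frac{k_0\,a_{k_0}+C\log(N_\P-1)}{N_\P-1}.
\]

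Finally, Markov's inequality gives $\mathbb{P}(\|\bm{x}_{N_\P}-\bm{x}^{\ast}\|>R/2)\leq 4a_{N_\P}/R^2$, so it suffices to force $a_{N_\P}\leq R^2\tau/4$. Splitting the numerator above and requiring each of the two resulting terms to be at most $R^2\tau/8$ reproduces exactly the two arguments of the $\max$ in the stated lower bound on $N_\P$ (the $k_0\,a_{k_0}$ term matching the first, the $C\log(N_\P-1)$ term the second, up to the harmless choice of constant). Since $\log(N_\P-1)/(N_\P-1)\to0$, such an $N_\P$ exists, which is precisely why the statement is existential and its second threshold is self-referential.

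I expect the main obstacle to be the borderline contraction: the choice $\alpha_k=\frac{1}{2\mu\beta k}$ makes the decay factor exactly $1-\tfrac1k$, the critical regime in which the clean $\O(1/k)$ rate degrades to $\O(\log k/k)$; this is the source of the $\log(N_\P-1)$ factor and of the implicit threshold. A secondary technical point is securing the contraction factor $1-2\mu\beta\alpha_k$: this needs the co-coercivity inequality $\alpha_k^2\|\nabla f(\bm{x}_k)-\nabla f(\bm{x}^{\ast})\|^2\leq \alpha_k^2 L\,(\bm{x}_k-\bm{x}^{\ast})^\top(\nabla f(\bm{x}_k)-\nabla f(\bm{x}^{\ast}))$ combined with (A3) so that $\alpha_k(2-\alpha_k L)\geq 2\beta\alpha_k$, while the role of $k_0$ is to guarantee $\alpha_k$ satisfies (A3) from the outset. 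The exact numerical constant in the second threshold is inessential, as any larger $N_\P$ remains a valid sufficient condition.
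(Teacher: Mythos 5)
Your argument is correct and reaches the same two-step structure as the paper's proof --- bound $\mathbb{E}[\|\bm{x}_{N_\P}-\bm{x}^{\ast}\|^2]$ by $R^2\tau/4$ and finish with Markov's inequality --- but it differs in how the expectation bound is obtained. The paper simply cites Theorem 3.2 of \cite{rosasco2019convergence} to import the bound $\mathbb{E}[\|\bm{x}_{k}-\bm{x}^{\ast}\|^2]\leq s_{k_0}(k_0/k)+\tfrac{\sigma^2}{\mu^2\beta^2}\tfrac{\log(k-1)}{k}$ and then splits the threshold on $N_\P$ into the two terms of the max, whereas you re-derive that bound from scratch: fixed-point property and nonexpansiveness of the proximal operator, (A1)/(A2) to kill the cross term and bound the noise, co-coercivity plus (A3)/(A4) to get the contraction $1-2\mu\beta\alpha_k$, and then the substitution $b_k=(k-1)a_k$ to resolve the borderline recursion $a_{k+1}\leq(1-\tfrac1k)a_k+\tfrac{C}{k^2}$ into the $\O(\log k/k)$ rate. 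Your derivation correctly identifies why the logarithm appears (the critical decay factor $1-\tfrac1k$), which the paper leaves implicit inside the citation; the price is that your constants differ slightly from the stated ones (you get $C=\sigma^2/(4\mu^2\beta^2)$ rather than $\sigma^2/(\mu^2\beta^2)$, a harmonic-sum additive constant, and an $N_\P-1$ versus $N_\P$ denominator). Since your bound is tighter than the imported one except for trivial off-by-one effects, and the claim is existential in $N_\P$, these discrepancies are immaterial, as you note. The net effect is a self-contained proof where the paper has a two-line reduction to an external result; the paper's version is shorter but requires checking that the cited theorem's hypotheses match (A1)--(A4), which your version makes explicit.
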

	
	\begin{proof}
		By the conditions (A1, A2, A3), Assumption 3.1 and Theorem 3.2 in~\cite{rosasco2019convergence}, we have for any $k\geq 2$,
		\begin{align}
		\mathbb{E}[\|\bm{x}_{k} - \bm{x}^{\ast}\|^2] \leq \mathbb{E}[\|\bm{x}_{k_0} - \bm{x}^{\ast}\|^2] \left( \frac{k_0}{k} \right) + \frac{\sigma^2}{\mu^2 \beta^2} \frac{\log (k-1)}{k}.
		\end{align}
		Let $\mathbb{E}[\|\bm{x}_{k_0} - \bm{x}^{\ast}\|^2] = s_{k_0}$. For any $\tau \in (0,1)$, there exists a $N_\P\in\mathbb{Z}^+$ satisfying
		\begin{equation}
		N_\P \geq \left\lceil\max\left\{ \frac{8 k_0 s_{k_0} }{R^2 \tau}, ~~ \frac{8 \sigma^2 \log (N_\P-1)}{\mu^2 \beta^2 R^2 \tau} \right\}\right\rceil,
		\end{equation}
		we have 
		\begin{align}
		\mathbb{E}[\|\bm{x}_{N_\P} - \bm{x}^{\ast}\|^2] \leq \frac{R^2 \tau}{4}. 
		\end{align}
		
		Therefore, by Markov inequality, we have that 
		\begin{align}
		\|\bm{x}_{N_\P} - \bm{x}^{\ast}\|^2 \leq \frac{R^2}{4} ~~ \Leftrightarrow ~~ \|\bm{x}_{N_\P} - \bm{x}^{\ast}\| \leq \frac{R}{2}
		\end{align}
		holds with probability at least $1 - \tau$.
		
	\end{proof}

\section{Additional Numerical Experiments}\label{appendix:experiments}

In this section, we provide additional numerical experiments to \textit{(i)} demonstrate the validness of group sparsity identification of HSPG; \textit{(ii)} provide comprehensive comparison to Prox-SG, RDA and Prox-SVRG on benchmark convex problems; and \textit{(iii)} describe more details regarding our non-convex deep learning experiments shown in the main body. 

\subsection{Linear Regression on Synthetic Data}\label{appendix:convex_exp_linear_regression}

We first numerically validate the proposed HSPG on group sparsity identification by linear regression problems with $\ell_1/\ell_2$ regularizations using synthetic data. Consider a data matrix $A\in\mathbb{R}^{N\times n}$ consisting of $N$ instances and the target variable $y\in\mathbb{R}^N$, we are interested in the following problem:
\begin{equation}\label{eq:lr}
    \minimize{x\in\mathbb{R}^n}\ \frac{1}{2N}\|Ax-y\|^2+ \lambda \sum_{g\in \G}\norm{[x]_g}.
\end{equation}
Our goal is to empirically show that HSPG is able to identify the ground truth zero groups with synthetic data.
We conduct the experiments as follows: \textit{(i)} generate the data matrix $A$ whose elements are uniformly distributed among $[-1, 1]$; \textit{(ii)} generate a vector $x^*$ working as the ground truth solution, where the elements are uniformly distributed among $[-1, 1]$ and the coordinates are equally divided into 10 groups ($|\G|=10$); \textit{(iii)} randomly set a number of groups of $x^*$ to be 0 according to a pre-specified group sparsity ratio; \textit{(iv)} compute the target variable $y=Ax^*$; (v) solve the above problem \eqref{eq:lr} for $x$ with $A$ and $y$ only, and then evaluate the Intersection over Union (IoU) with respect to the identities of the zero groups between the computed solution estimate $\hat x$ by HSPG and the ground truth $x^*$.

We test HSPG on \eqref{eq:lr} under different problem settings. For a slim matrix $A$ where $N\ge n$, we test with various group sparsity ratios among $\{0.1,0.3,0.5,0.7,0.9\}$, and for a fat matrix $A$ where $N<n$, we only test with a certain group sparsity value since a recovery of $x^*$ requires that the number of non-zero elements in $x^*$ is bounded by $N$. Throughout the experiments, we set $\lambda$ to be $100/N$, the mini-batch size $|\mathcal{B}|$ to be 64, step size $\alpha_k$ to be 0.1 (constant), and fine-tune $\epsilon$ per problem. Based on a similar statistical test on objective function stationarity~\citep{zhang2020statistical}, we switch to \halfspacestep{} roughly after 30 epoches. Table~\ref{tb:lr} shows that under each setting, the proposed HSPG correctly identifies the  groups of zeros as indicated by $\textrm{IoU}(\hat{x},x^*)=1.0$, which is a strong evidence to show the correctness of group sparsity idenfitication of HSPG. 

\begin{table}[h]
\centering
\caption{Linear regression problem settings and IoU of the recovered solutions by HSPG.}
\label{tb:lr}
\begin{tabular}{c|cccc}
\hline
                                                                      & \quad $N$\quad     &  \quad $n$  \quad   &  \quad Group sparsity ratio of $x^*$    \quad             &  \quad IoU($\hat x,x^*$)  \quad \\ \hline
\multirow{4}{*}{\begin{tabular}[c]{@{}l@{}} Slim $A$ \\  \end{tabular}} & \quad 10000\quad  &  \quad1000 \quad &  \quad\{0.1, 0.3, 0.5, 0.7, 0.9\}  \quad&  \quad1.0  \quad\\ 
                                                                      & \quad 10000\quad  &  \quad2000 \quad &  \quad\{0.1, 0.3, 0.5, 0.7, 0.9\} \quad &  \quad1.0 \quad \\ 
                                                                      & \quad 10000\quad  &  \quad3000 \quad &  \quad\{0.1, 0.3, 0.5, 0.7, 0.9\} \quad &  \quad1.0  \quad\\ 
                                                                    & \quad 10000\quad  &  \quad4000 \quad &  \quad\{0.1, 0.3, 0.5, 0.7, 0.9\}  \quad&  \quad1.0 \quad \\ \hline
\multirow{4}{*}{\begin{tabular}[c]{@{}l@{}}Fat $A$\\ \end{tabular}}      &  \quad200  \quad  &  \quad1000 \quad&  \quad0.9    \quad                        & \quad 1.0 \quad \\  
                                                                      &  \quad300  \quad  &  \quad1000  \quad& \quad 0.8  \quad                          &  \quad1.0  \quad\\ 
                                                                      &  \quad400  \quad  &  \quad1000  \quad&  \quad0.7  \quad                          &  \quad1.0  \quad\\ 
                                                                      &  \quad500 \quad   &  \quad1000 \quad & \quad 0.6  \quad                          &  \quad1.0  \quad\\ \hline
\end{tabular}
\end{table}

\subsection{Logistic Regression}\label{appendix:convex_exp_logistic_regression}

We then focus on the benchmark convex logistic regression problem with the mixed $\ell_1/\ell_2$-regularization given $N$ examples $(d_1, l_1), \cdots, (d_N, l_N)$ where $d_i\in \mathbb{R}^n$ and $l_i \in \{-1, 1\}$ with the form\vspace{-0.1cm}
\begin{equation}\label{def:minimize_logistic_l1}
\small
\minimize{(x; b)\in \R^{n+1}}\ \frac{1}{N}\sum_{i=1}^N \log(1 + e^{-l_i (x^T d_i +b)}) + \lambda \sum_{g\in \G}\norm{[x]_g},
\end{equation}
for binary classification with a bias $b\in\mathbb{R}$. We set the regularization parameter $\lambda$ as $100/N$ throughout the experiments since it yields high sparse solutions and low object value $f$’s, equally decompose the variables into 10 groups to form $\mathcal{G}$, and  test~problem~\eqref{def:minimize_logistic_l1} on 8 standard publicly available large-scale datasets from LIBSVM repository~\citep{chang2011libsvm}
as summarized in Table~\ref{table:datasets}. All convex experiments are conducted on a 64-bit operating system with an Intel(R) Core(TM) i7-7700K CPU $@$ 4.20 GHz and 32 GB random-access memory.

We run the solvers with a maximum number of epochs as $60$.
The mini-batch size $|\mathcal{B}|$ is set to be $\min\{256, \lceil{0.01N\rceil}\}$ similarly to~\citep{yang2019stochastic}. The step size $\alpha_k$ setting follows~[Section 4]\citep{xiao2014proximal}. Particularly, we first compute a Lipschitz constant $L$ as $\max_{i}\norm{d_i}^2/4$, then fine tune and select constant $\alpha_k\equiv\alpha=1/L$ to~\proxsg{} and~\proxsvrg{} since it exhibits the best results. For~\rda{}, the step size parameter $\gamma$ is fined tuned as the one with the best performance among all powers of $10$. For~\algacro{}, we set $\alpha_k$ as the same as~\proxsg{} and \proxsvrg{} in practice.
We set $N_\mathcal{P}$ as $30N/|\mathcal{B}|$ such that \halfspacestep{} is triggered after employing Prox-SG Step 30 epochs similarly to Appendix~\ref{appendix:convex_exp_linear_regression}, and the control parameter $\epsilon$ in~\eqref{def:proj} as 0.05. We select two $\epsilon$'s as $0$ and $0.05$. 
The final objective value $\Psi$ and $f$, and group sparsity in the solutions are reported in  Table~\ref{table:object_Psi_value_convex}-\ref{table:group_sparsity_convex}, where we mark the best values as bold to facilitate the comparison. Furthermore, Figure~\ref{figure:runtime_convex} plots the relative runtime of these solvers for each dataset, scaled by the runtime of the most time-consuming solver. 

Table~\ref{table:group_sparsity_convex} shows that our~\algacro{} is definitely the best solver on exploring the group sparsity of the solutions. In fact,~\algacro{} under $\epsilon=0.05$ performs all the best except \textit{ijcnn1}.~\proxsvrg{} is the second best solver on group sparsity exploration, which demonstrates that the variance reduction techniques works well in convex setting to promote sparsity, but not in non-convex settings. ~\algacro{} under $\epsilon=0$ performs much better than~\proxsg{} which matches the better sparsity recovery property of~\algacro{} as stated in Theorem~\ref{thm:sparsity_recovery_rate_hbproxsg} even under $\epsilon$ as $0$. 
Moreover, as shown in Table~\ref{table:object_Psi_value_convex} and~\ref{table:object_f_value_convex}, we observe that all solvers perform quite competitively in terms of final objective values (round up to 3 decimals) except~\rda{}, which demonstrates that \algacro{} reaches comparable convergence as~\proxsg{} and~\proxsvrg{} in practice.  Finally, Figure~\ref{figure:runtime_convex} indicates that Prox-SG, RDA and \algacro{} have similar computational cost to proceed, except~\proxsvrg{} due to its periodical full gradient computation.

\vspace{-0.2cm}
\begin{table}[h]
	\scriptsize
	\centering
	\def\arraystretch{1.1}
	\caption{Summary of datasets.\label{table:datasets}}
	\begin{tabular}{ccccccccc}
		\Xhline{2\arrayrulewidth}
		Dataset & N & n  & Attribute & & Dataset & N & n  & Attribute \\
		\hline
		a9a & 32561 & 123 & binary \{0, 1\} & & news20 & 19996 & 1355191 &   unit-length \\
		higgs & 11000000 & 28 & real $[-3, 41]$ & & real-sim & 72309 & 20958 & real [0, 1]\\
		ijcnn1 & 49990 & 22  &  real [-1, 1] & &  url\_combined & 2396130 & 3231961 & real $[-4, 9]$ \\
		kdda & 8407752 & 20216830 & real $[-1, 4]$ & & w8a & 49749 & 300   & binary \{0, 1\}\\
		\Xhline{2\arrayrulewidth}
	\end{tabular}
\end{table}

\begin{table}[h]
	
	\centering
	    \def\arraystretch{1.1}

	    \caption{Final objective values $\Psi$ for tested algorithms on convex problems.}
	    \label{table:object_Psi_value_convex}
		{\scriptsize
		\begin{tabularx}{\textwidth} { 
           >{\centering\arraybackslash}X 
           >{\centering\arraybackslash}X 
           >{\centering\arraybackslash}X 
           >{\centering\arraybackslash}X 
           >{\centering\arraybackslash}X 
           >{\centering\arraybackslash}X  }
			\Xhline{3\arrayrulewidth}
            \multirow{2}{*}{Dataset} & \multirow{2}{*}{\proxsg{}} & \multirow{2}{*}{\rda} & \multirow{2}{*}{\proxsvrg{}}  & \multicolumn{2}{c}{\algacro{}} \\
            \cline{5-6}
            & &  & &  $\epsilon$ as $0$ & $\epsilon$ as $0.05$\\
			\hline
			a9a & \textbf{0.355} & 0.359  & \textbf{0.355} & \textbf{0.355} & \textbf{0.355} \\
			higgs & \textbf{0.357} & 0.360 & 0.365 & 0.358 & 0.358\\
			ijcnn1 & \textbf{0.248} & 0.278 & \textbf{0.248} & \textbf{0.248} & \textbf{0.248}\\
			kdda & \textbf{0.103} & 0.124 & \textbf{0.103} & \textbf{0.103} & \textbf{0.103}\\
			news20 & \textbf{0.538} & 0.693 & \textbf{0.538} & \textbf{0.538} & \textbf{0.538}  \\
			real-sim & \textbf{0.242} & 0.666 & 0.244 & \textbf{0.242} & \textbf{0.242} \\
			url\_combined & 0.397 & 0.579  & \textbf{0.391} &  0.405 & 0.405\\
			w8a & \textbf{0.110} & 0.111 & 0.112 & \textbf{0.110} & \textbf{0.110}\\
			\Xhline{3\arrayrulewidth} 
		\end{tabularx}
		}
	    \caption{Final objective values $f$ for tested algorithms on convex problems.}
	    \label{table:object_f_value_convex}

		{\scriptsize
		\begin{tabularx}{\textwidth} { 
           >{\centering\arraybackslash}X 
           >{\centering\arraybackslash}X 
           >{\centering\arraybackslash}X 
           >{\centering\arraybackslash}X 
           >{\centering\arraybackslash}X 
           >{\centering\arraybackslash}X  }
			\Xhline{3\arrayrulewidth}
            \multirow{2}{*}{Dataset} & \multirow{2}{*}{\proxsg{}} & \multirow{2}{*}{\rda} & \multirow{2}{*}{\proxsvrg{}} & \multicolumn{2}{c}{\algacro{}} \\
            \cline{5-6}
            & &  & &  $\epsilon$ as $0$ & $\epsilon$ as $0.05$\\
			\hline
			a9a & \textbf{0.329} & 0.338  & \textbf{0.329} & \textbf{0.329} & \textbf{0.329} \\
			higgs & \textbf{0.357} & 0.360 & 0.365 & 0.358 & 0.358 \\
			ijcnn1 & \textbf{0.213} & 0.270 & \textbf{0.213} & \textbf{0.213} & 0.214\\
			kdda & \textbf{0.103} & 0.124 & \textbf{0.103} & \textbf{0.103} & \textbf{0.103}\\
			news20 & 0.373 & 0.693 & 0.381 & \textbf{0.372} & \textbf{0.372}  \\
			real-sim & \textbf{0.148} & 0.665 & 0.159 & \textbf{0.148} & \textbf{0.148} \\
			url\_combined & 0.397 & 0.579 & \textbf{0.391} & 0.405 & 0.405\\
			w8a & \textbf{0.089} & 0.098 & 0.091 & \textbf{0.089} & \textbf{0.089}\\
			\Xhline{3\arrayrulewidth} 
		\end{tabularx}
		}
		\caption{Group sparsity for tested algorithms on convex problems.}
		\label{table:group_sparsity_convex}
		
		{\scriptsize
		\begin{tabularx}{\textwidth} { 
           >{\centering\arraybackslash}X 
           >{\centering\arraybackslash}X 
           >{\centering\arraybackslash}X 
           >{\centering\arraybackslash}X 
           >{\centering\arraybackslash}X 
           >{\centering\arraybackslash}X }
			\Xhline{3\arrayrulewidth}
            \multirow{2}{*}{Dataset} & \multirow{2}{*}{\proxsg{}} & \multirow{2}{*}{\rda{}} & \multirow{2}{*}{\proxsvrg{}} & \multicolumn{2}{c}{\algacro{}} \\
            \cline{5-6}
            & &  & &  $\epsilon$ as $0$ & $\epsilon$ as $0.05$\\
			\hline
			a9a & 20\% & \textbf{30\%}  & \textbf{30\%} & \textbf{30\%} & \textbf{30\%} \\
			higgs & 0\% & 10\% & 0\% & 0\% & \textbf{30\%}\\
			ijcnn1 & 50\% & \textbf{70\%} & 60\% & 60\% & 60\% \\
			kdda & 0\% & 0\% & 0\% & 0\% & \textbf{80\%}\\
			news20 & 20\% & 80\% & \textbf{90\%} & 80\% & \textbf{90\%}  \\
			real-sim & 0\% & 0\% & \textbf{80\%} & 0\% & \textbf{80\%} \\
			url\_combined & 0\% & 0\% & 0\% & 0\% & \textbf{90\%} \\
			w8a & \textbf{0\%} & \textbf{0\%} & \textbf{0\%} & \textbf{0\%} & \textbf{0\%} \\
			\Xhline{3\arrayrulewidth} 
		\end{tabularx}
		}
\end{table} 

\begin{figure}
    \centering
	\includegraphics[width=\textwidth]{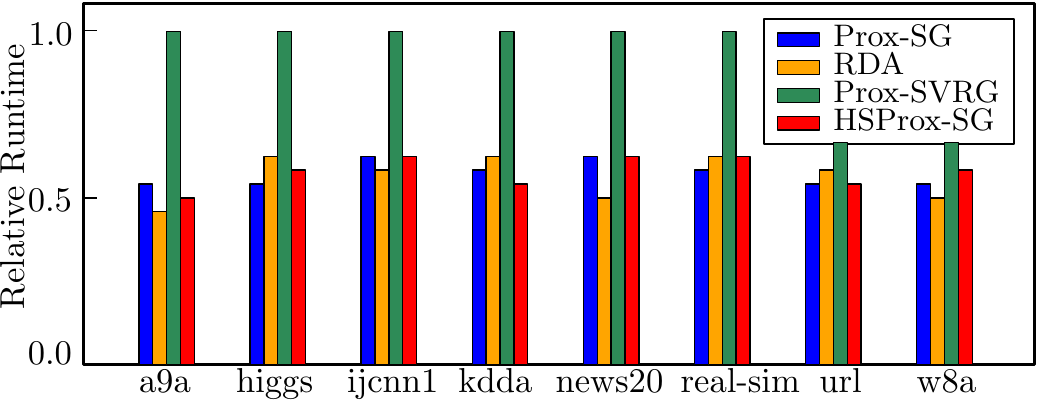}
	\caption{Relative runtime.}
	\label{figure:runtime_convex}
\end{figure}

\newpage

\subsection{Deep Learning Experiments}\label{appendix:nonconvex_exp}

We conduct all deep learning experiments on one GeForce GTX 1080 Ti GPU, and describe how to fine-tune the control parameter $\epsilon$ in~\eqref{def:proj} in details. According to Theorem~\ref{thm:sparsity_recovery_rate_hbproxsg}, a larger $\epsilon$ results in a faster group sparsity identification, while by Lemma~\ref{lemma:sufficient_decrease_half_space} on the other hand too large $\epsilon$ may cause a significant regression on the target objective $\Psi$ value, \ie, the $\Psi$ value increases a lot. Hence, in our experiments, from the point of view of optimization, we search a proper $\epsilon$ in the following ways: start from $\epsilon=0.0$ and the models trained by employing $N_\P$~\proxsgstep{}s, incrementally increase $\epsilon$ by 0.01 and check if the $\Psi$ on the first~\halfspacestep{} has an obvious increase, then accept the largest $\epsilon$ without regression on $\Psi$ as our fine tuned $\epsilon$ shown in the main body of the paper. Particularly, the fine tuned $\epsilon$'s equal to 0.03, 0.05, 0.02 and 0.02 for \vgg{} with~\cifar{}, \vgg{} with~\fashionmnist{},~\resnet{} with~\cifar{} and~\resnet{} with~\fashionmnist{} respectively. Note from the perspective of different applications, there are different criterions to fine tune $\epsilon$, \ie, for model compression, we may accept $\epsilon$ based on the validation accuracy regression to reach higher group sparsity. 

Additionally, we also report the final $f$ comparison in Table~\ref{table:object_f_value_nonconvex} and its evolution on~\resnet{} with~\cifar{} in Figure~\ref{figure:f_evolution}, where we can see that all tested algorithms can achieve competitive $f$ values as they do in convex settings. And the evolution of $f$ is similar to that of $\Psi$, \ie, the raw objective $f$ generally monotonically decreases for small $\epsilon=0$ to $0.02$, and experiences a mild pulse after switch to~\halfspacestep{} for larger $\epsilon$, \eg, 0.05, which matches Lemma~\ref{lemma:sufficient_decrease_half_space_restated}.
\begin{table}[h]
	\centering
	    \def\arraystretch{1.1}

	    \caption{Final objective values $f$ for tested algorithms on non-convex problems.}
	    \label{table:object_f_value_nonconvex}

		{\scriptsize
		\begin{tabularx}{\textwidth} { 
           >{\centering\arraybackslash}X 
           >{\centering\arraybackslash}X 
           >{\centering\arraybackslash}X 
           >{\centering\arraybackslash}X 
           >{\centering\arraybackslash}X 
           >{\centering\arraybackslash}X  }
			\Xhline{3\arrayrulewidth}
            \multirow{2}{*}{Backbone} & \multirow{2}{*}{Dataset} & \multirow{2}{*}{\proxsg{}} & \multirow{2}{*}{\proxsvrg{}} & \multicolumn{2}{c}{\algacro{}} \\
            \cline{5-6}
            &  & &  & $\epsilon$ as $0$ & fine tuned $\epsilon$\\
			\hline
			\multirow{2}{*}{\vgg{}}& \cifar{} & 0.010 & 0.036 & 0.010 & \textbf{0.009} \\
			& \fashionmnist{} & 0.181 & \textbf{0.165} & 0.181 & 0.182\\
			\hdashline
			\multirow{2}{*}{\resnet{}} & \cifar{} & \textbf{0.001} & 0.002 & \textbf{0.001} & 0.004  \\
			 &\fashionmnist{} & 0.006 & 0.008 & \textbf{0.005} & 0.010 \\
			 \hdashline
			\multirow{2}{*}{\mobilenet{}} & \cifar{} & \textbf{0.021} & 0.031 & \textbf{0.021} & 0.031  \\
			 &\fashionmnist{} & 0.074 & \textbf{0.057} & 0.074 & 0.088 \\
			\Xhline{3\arrayrulewidth} 
		\end{tabularx}
		}
\end{table} 
\begin{figure}[h]
    \centering
    \includegraphics[width=0.5\textwidth]{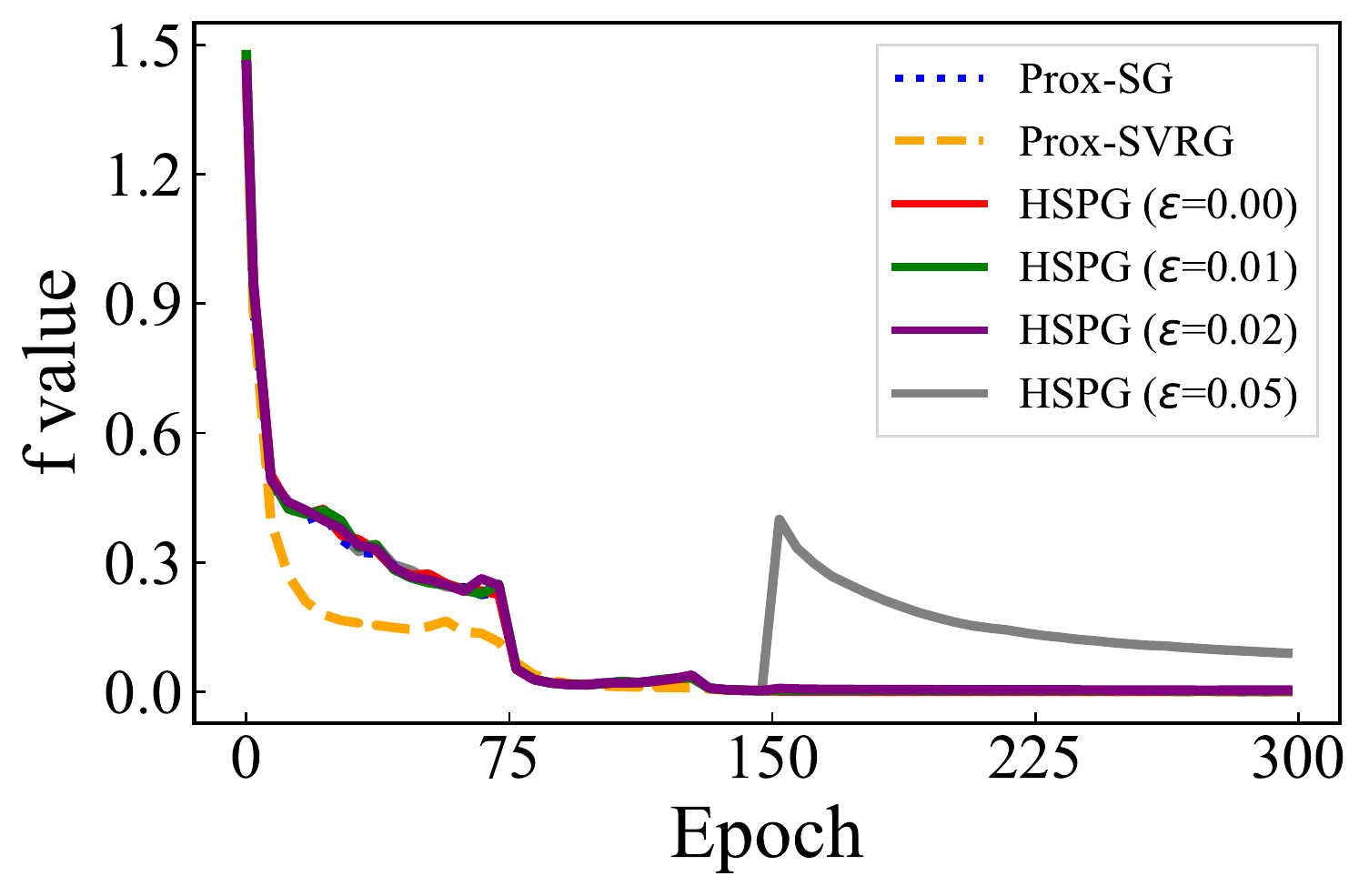}
    \caption{Evolution of $f$ value on~\resnet{} with~\cifar{}.}
    \label{figure:f_evolution}
\end{figure}

\end{document}